\documentclass[11pt,twoside, leqno]{article}

\usepackage{amssymb}
\usepackage{amsmath}
\usepackage{amsthm}

\allowdisplaybreaks

\pagestyle{myheadings}\markboth{Dachun Yang and Yuan
Zhou}{Localized Hardy Spaces $H^1$ Related to
Admissible Functions}

\textwidth=15cm
\textheight=21.216cm
\oddsidemargin 0.45cm
\evensidemargin 0.45cm

\parindent=15pt

\def\rr{{\mathbb R}}
\def\rn{{{\rr}^n}}

\def\zz{{\mathbb Z}}
\def\nn{{\mathbb N}}
\def\hh{{\mathbb H}}

\def\cc{{\mathbb C}}

\def\cx{{\mathcal X}}
\def\bbg{{\mathbb G}}

\def\cd{{\mathcal D}}

\def\cg{{\mathcal G}}
\def\cl{{\mathcal L}}

\def\cb{{\mathcal B}}
\def\ca{{\mathcal A}}
\def\ccc{{\mathcal C}}

\def\fz{\infty}
\def\az{\alpha}
\def\supp{{\mathop\mathrm{\,supp\,}}}

\def\loc{{\mathop\mathrm{\,loc\,}}}

\def\lz{\lambda}
\def\dz{\delta}

\def\ez{\epsilon}
\def\ezl{{\epsilon_1}}
\def\ezz{{\epsilon_2}}

\def\kz{\kappa}
\def\bz{\beta}

\def\gz{{\gamma}}

\def\vz{\varphi}
\def\tz{\theta}
\def\sz{\sigma}

\def\wz{\widetilde}

\def\hs{\hspace{0.3cm}}

\def\ls{\lesssim}
\def\gs{\gtrsim}

\def\fin{{\mathop\mathrm{fin}}}

\def\ati{{\mathrm {AOTI}}}

\def\diam{{\mathop\mathrm{\,diam\,}}}

\def\dint{\displaystyle\int}

\def\dfrac{\displaystyle\frac}

\def\r{\right}
\def\lf{\left}

\newtheorem{thm}{Theorem}[section]
\newtheorem{lem}{Lemma}[section]
\newtheorem{prop}{Proposition}[section]
\newtheorem{rem}{Remark}[section]

\newtheorem{defn}{Definition}[section]

\numberwithin{equation}{section}

\begin{document}

\arraycolsep=1pt

\title{{\vspace{-5cm}\small\hfill\bf Trans. Amer. Math. Soc., to appear}\\
\vspace{4cm}\Large\bf Localized Hardy Spaces $H^1$ Related to
Admissible Functions on RD-Spaces
and Applications to Schr\"odinger
Operators \footnotetext{\hspace{-0.35cm} {\it 2000 Mathematics Subject
Classification}. {Primary 42B30; Secondary 42B20, 42B25, 42B35.}
\endgraf{\it Key words and phrases.} Space of homogeneous type,
Heisenberg group, connected and simply connected Lie group, admissible
function, Hardy space, maximal function, atom, Schr\"odinger operator, reverse
H\"older inequality, Riesz transform.
\endgraf
The first author is supported by the National
Natural Science Foundation (Grant No. 10871025) of China.}}
\author{Dachun Yang and Yuan Zhou}
\date{ }
\maketitle

\begin{center}
\begin{minipage}{13.5cm}\small
{\noindent{\bf Abstract.}
Let ${\mathcal X}$ be an RD-space, which means
that ${\mathcal X}$ is a space of homogenous type
in the sense of Coifman and Weiss with the additional property that
a reverse doubling property holds in ${\mathcal X}$.
In this paper, the authors first introduce the notion of
admissible functions $\rho$
and then develop a theory of localized
Hardy spaces $H^1_\rho ({\mathcal X})$
associated with $\rho$,
which includes several maximal function
characterizations of $H^1_\rho ({\mathcal X})$,
the relations between $H^1_\rho ({\mathcal X})$
and the classical Hardy space $H^1({\mathcal X})$ via constructing a
kernel function related to $\rho$,
the atomic decomposition characterization of $H^1_\rho ({\mathcal X})$,
and the boundedness of certain localized singular
integrals on $H^1_\rho({\mathcal X})$
via a finite atomic decomposition characterization
of some dense subspace of $H^1_\rho ({\mathcal X})$.
This theory has a wide range of applications.
Even when this theory is applied, respectively, to
the Schr\"odinger operator or
the degenerate Schr\"odinger operator on $\rn$,
or the sub-Laplace Schr\"odinger operator on Heisenberg
groups or connected and simply connected nilpotent Lie groups,
some new results are also obtained.
The Schr\"odinger operators considered here are associated with nonnegative
potentials satisfying the reverse H\"older inequality.}
\end{minipage}
\end{center}

\vspace{0.2cm}

\section{Introduction\label{s1}}

The theory of Hardy spaces on the Euclidean
space $\rn$ plays an important role in various fields of analysis
and partial differential equations; see, for examples,
\cite{sw,fs72,clms,s93,g08}. One of the most important applications
of Hardy spaces is that they are good substitutes of Lebesgue
spaces when $p\in(0,\,1]$. For example,
when $p\in(0,\,1]$, it is well-known that Riesz
transforms are not bounded on $L^p(\rn)$, however, they are
bounded on Hardy spaces. A localized version of Hardy spaces
on $\rn$ was first introduced by Goldberg \cite{g}. These
classical Hardy spaces are essentially related to the Laplace
operator $\Delta\equiv-\sum_{j=1}^n(\frac{\partial}{\partial x_j})^2$ on $\rn$.

On the other hand, the studies of Schr\"odinger operators with
nonnegative potentials satisfying
the reverse H\"older inequality obtain an increasing interest;
see, for example, \cite{f83,z99,s95,l99,d98,dz99,ks00a,ks00b,
dz02,dz04,d05,dy05b,dgttz,lll,aa}.
In particular, Fefferman \cite{f83}, Shen \cite{s95} and Zhong \cite{z99} established
some basic results, including estimates of the fundamental solutions and the
boundedness on Lebesgue spaces of Riesz transforms, for the
Schr\"odinger operator $\cl\equiv\Delta+V$ on $\rn$
with $n\ge3$ and the nonnegative potential $V$ satisfying
the reverse H\"older inequality.
Lu \cite{l96} extended part of these
results to the sub-Laplace Schr\"odinger operator
on stratified groups,
and Li \cite{l99} on connected and simply connected nilpotent
Lie groups. Kurata and Sugano \cite{ks00a}
extended some of these results
to the degenerate Schr\"odinger operator on $\rn$ with $n\ge3$.
On the other hand, Dziuba\'nski and Zienkiewicz \cite{dz99}
first characterized the Hardy space $H^1_\cl(\rn)$ for Schr\"odinger operators
via atoms, the maximal function defined by the semigroup
generated by $\cl$ and the Riesz transforms $\nabla\cl^{-1/2}$,
which were further generalized by C. Lin, H. Liu and Y. Liu \cite{lll}
to Heisenberg groups. Also, Duong and Yan \cite{dy05b} established
the Lusin-area function and molecular characterizations of Hardy
spaces $H^1_\cl(\rn)$ associated to the operator $\cl$ with heat kernel bounds,
which includes the Schr\"odinger operator with
nonnegative potential as an example.
Dziuba\'nski \cite{d05} further obtained the atomic characterization
and the maximal function characterization of the semigroup
generated by $\cl$ for Hardy spaces $H^1_\cl(\rn)$
associated with the degenerate Schr\"odinger operator $\cl$
on $\rn$ via a theory of Hardy spaces on spaces of homogeneous
type with the additional assumption that the measure of any ball
is equivalent to its radius in \cite{cw77, ms79b, u80}.

Recently, a theory of Hardy spaces on so-called RD-spaces
were established in \cite{hmy06,hmy2,gly1,gly2}.
A space  $\cx$ of homogenous type in the sense of Coifman and Weiss is called
 an RD-space if $\cx$ has the additional property that
a reverse doubling property holds in $\cx$ (see \cite{hmy2}).
It is well-known that a connected space of homogeneous type
is an RD-space. Typical examples of RD-spaces
include Euclidean spaces, Euclidean spaces with
weighted measures satisfying the doubling property,
Heisenberg groups, Lie groups of polynomial growth (\cite{v88, vsc92})
and the boundary of an unbounded model
polynomial domain in $\cc^2$ (\cite{ns04, ns06}), or
more generally, Carnot-Carath\'eodory spaces with
doubling measures (\cite{nsw85, hmy2}). Throughout this
paper, we only consider those RD-spaces with infinity total
measures.

Motivated by the properties of nonnegative
potentials satisfying the reverse H\"older inequality
in aforementioned Schr\"odinger operators,
in this paper, we first introduce a class of admissible functions
$\rho$ on $\cx$. Via establishing some basic properties of $\rho$, we develop
a theory of Hardy spaces $H^1_\rho (\cx)$
associated to admissible functions $\rho$,
which includes several maximal function
characterizations of $H^1_\rho (\cx)$,
the relations between $H^1_\rho (\cx)$
and the classical Hardy space $H^1(\cx)$ via
constructing a kernel function
related to $\rho$, the atomic decomposition
characterization of $H^1_\rho (\cx)$,
and the boundedness of certain localized singular
integrals on $H^1_\rho (\cx)$
via a finite atomic decomposition characterization
of some dense subspace of $H^1_\rho (\cx)$.
Since these results hold for any admissible
function $\rho$ and any RD-space $\cx$, they have a wide range
of applications. Moreover, even when this theory is applied, respectively, to
the Schr\"odinger operator or
the degenerate Schr\"odinger operator on $\rn$,
or the sub-Laplace Schr\"odinger operator on Heisenberg
groups or connected and simply connected nilpotent Lie groups,
we also obtain some new results. Precisely,
this paper is organized as follows.

In Section 2, we first recall some notation and notions from \cite{hmy2}.
Then we introduce the notions of admissible functions $\rho$,
localized Hardy spaces $H^1_\rho (\cx)$ defined by the
grand maximal functions, and atomic Hardy spaces $H^{1,\,q}_{\rho}(\cx)$.
Some properties of admissible functions are also presented,
which are used through the whole paper.
We also recall some results on classical Hardy spaces
$H^1(\cx)$ from \cite{hmy06, hmy2, gly1, gly2}.

One key step of this paper is
to construct a kernel function on
$\cx\times\cx$ associated to any given admissible function
$\rho$ in Proposition \ref{p3.1} below
by subtly exploiting some ideas originally from Coifman \cite{djs}.
A suitable variant of this kernel function actually yields an approximation
of the identity related to $\rho$. This may be very useful
in establishing a theory of Besov and Triebel-Lizorkin spaces
including Hardy spaces $H^p(\cx)$ when $p\le 1$ but near
to $1$ and fractional Sobolev spaces; see \cite{hmy06,hmy2}.
Using this kernel function, in Section \ref{s3} of this paper,
we establish the relations between $H^1_\rho (\cx)$
and $H^1(\cx)$ (see Theorem \ref{t3.1} below), and as an application,
we further obtain an atomic decomposition characterization of $H^1_\rho (\cx)$
via $(1,\,q)_{\rho}$-atoms with $q\in(1,\,\fz]$
(see Theorem \ref{t3.2} (i) below). Moreover, for certain dense subspace
of $H^1_\rho (\cx)$, we establish its finite atomic
decomposition characterization
via $(1,\,q)_{\rho}$-atoms with $q<\fz$
 and continuous $(1,\,\fz)_{\rho}$-atom  (see Theorem \ref{t3.2} (ii) below).
As an application of this result, we establish a general boundedness
criterion for sublinear operators on $H^1_\rho (\cx)$
via atoms (see Proposition \ref{p3.2} below), and then
we obtain the boundedness on $H^1_\rho (\cx)$
of certain localized singular integrals (see Proposition \ref{p3.3} below),
which is useful in establishing the boundedness of Riesz
transforms related to Schr\"odinger operators in Section \ref{s5}.

In Section 4, we establish a radial maximal function characterization of
$H^1_\rho (\cx)$; see Theorem \ref{t4.1} below.
For the sake of applications,
we also characterize $H^1_\rho (\cx)$
via a variant of the radial maximal functions, which is closely
related to the considered admissible function $\rho$;
see Theorem \ref{t4.2} below. We should point out that
the method used to obtain the radial maximal function characterization of
$H^1_\rho (\cx)$ is totally different
from the method used by Dziuba\'nski and Zienkiewicz in \cite{d98,dz99,dz02,d05}
to obtain a similar result on $\rn$.
The method in \cite{d98,dz99,dz02,d05} strongly depends on
an existing theory of localized Hardy spaces $h^1$,
on $\rn$ or on spaces of homogeneous
type with the additional assumption that the measure of any ball
is equivalent to its radius,
in the sense of Goldberg \cite{g}. We successfully avoid this via the
discrete Calder\'on reproducing formula from
\cite{hmy2,gly1} and a subtle split of dyadic cubes
of Christ in \cite{ch}.

In Section \ref{s5}, we apply the results obtained in Sections \ref{s3}
and \ref{s4}, respectively, to the Schr\"odinger operator or the
degenerate Schr\"odinger operator on $\rn$, the
sub-Laplace Schr\"odinger operator on Heisenberg groups or on
connected and simply connected nilpotent Lie groups.
The nonnegative potentials of these Schr\"odinger operators are assumed to
satisfy the reverse H\"older inequality.
Even for these special cases, our results
further complement the results in \cite{dz99,dz02,d05,lll}.
Especially for the sub-Laplace Schr\"odinger operator on
connected and simply connected nilpotent
Lie groups, Theorem \ref{t5.1} through Theorem \ref{t5.4} below seem unknown before.

Moreover, in forthcoming papers, we will develop a dual theory for
$H^1_\rho (\cx)$ and will also apply these results
obtained in this paper to the sub-Laplace Schr\"odinger operator
with nonnegative potentials satisfying the reverse H\"older inequality
on the boundary of an unbounded model
polynomial domain in $\cc^2$ appeared in \cite{ns04,ns06}.

We finally make some conventions. Throughout this paper, we always
use $C$ to denote a positive constant that is independent of the main
parameters involved but whose value may differ from line to line.
Constants with subscripts, such as $C_1$, do not change in
different occurrences. If $f\le Cg$, we then write $f\ls g$ or
$g\gs f$; and if $f\ls g\ls f$, we then write $f\sim g$.
We also denote $\max\{\bz,\,\gz\}$ and $\min\{\bz,\,\gz\}$,
respectively, by $\bz\vee\gz$ and $\bz\wedge\gz$. For any set $E\subset\cx$,
set $E^\complement\equiv(\cx\setminus E)$.

\section{Preliminaries\label{s2}}

We first recall the notions of spaces of homogeneous type in the
sense of Coifman and Weiss \cite{cw71,cw77} and RD-spaces
in \cite{hmy2}.

\begin{defn}\rm\label{d2.1}
Let $(\cx,\, d)$ be a metric space with a
regular Borel measure $\mu$
such that all balls defined by $d$ have finite and positive measure.
For any $x\in \cx $ and $r>0$, set the ball $B(x,r)\equiv\{y\in \cx :\
d(x,y)<r\}.$

(i) The triple $(\cx,\,d,\,\mu)$ is called a
    space of homogeneous type if there exists
    a constant $C_1\ge 1$ such that for all $x\in \cx $ and $r>0$,
    $$\mu(B(x, 2r))\le C_1\mu(B(x,r))\ (\mathrm{\it doubling\ property}).$$

(ii) The triple $(\cx,\,d,\,\mu)$ is called an RD-space if  there exist
 constants $0<\kz\le n$ and $C_2\ge1$ such that for all
    $x\in \cx$, $0<r<\diam(\cx)/2$ and $1\le\lz<\diam(\cx)/(2r)$,
     \begin{equation}\label{2.1}
     (C_2)^{-1}\lz^\kz\mu(B(x,r))\le\mu(B(x,\lz r))\le C_2\lz^ n\mu(B(x,r)),
        \end{equation}
    where $\diam(\cx)=\sup_{x,\,y\in\cx}d(x,y)$.
\end{defn}

\begin{rem}\label{r2.1}\rm
(i) Obviously, an RD-space is a space of homogeneous type.
Conversely, a space of homogeneous type automatically
satisfies the second inequality of \eqref{2.1}.
Moreover, it was proved in \cite[Remark 1.1]{hmy2}
that if $\mu$ is doubling, then $\mu$ satisfies \eqref{2.1}
if and only if there exist constants $a_0>1$ and $ C_0>1$ such that for
all $x\in\cx$ and $0<r<\diam(\cx)/a_0$,
$$\mu(B(x, a_0r))\ge  C_0\mu(B(x,r))\quad
(\mathrm{\rm reverse\ doubling\ property})$$ (If $a_0=2$, this is
the classical reverse doubling condition), and equivalently,
for all $x\in\cx$ and $0<r<\diam(\cx)/a_0$, $(B(x, a_0r)\setminus
B(x, r))\neq\emptyset$, which, as pointed out to us by the referee,
is known in the topology as uniform perfectness.
For more equivalent characterizations of RD-spaces, see \cite{yz09}.

(ii) Let $d$ be a quasi-metric, which means that there exists $A_0\ge1$
such that for all $x$, $y$, $z\in\cx$,
$d(x, y)\le A_0(d(x, z)+d(z, y))$.
Recall that Mac\'ias and Segovia \cite[Theorem 2]{ms79a} proved that
there exists an equivalent quasi-metric $\wz d$ such that
all balls corresponding to $\wz d$ are open in the topology
induced by $\wz d$, and there exist constants $\wz A_0>0$ and $\tz\in(0, 1)$
such that for all $x$, $y$, $z\in\cx$,
\begin{equation*}
\lf|\wz d(x, z)-\wz d(y, z)\r|
\le \wz A_0\lf[\wz d(x, y)\r]^\tz\lf[\wz d(x, z)+\wz d(y, z)\r]^{1-\tz}.
\end{equation*}
It is known that the approximation of the identity
as in Definition \ref{d2.3} below also exists for $\wz d$; see \cite{hmy2}.
Obviously, all results in this section and Sections \ref{s3} and \ref{s4}
are invariant on equivalent
quasi-metrics. From these facts, it follows that all
conclusions of this section and Sections \ref{s3} and \ref{s4}
are still valid for quasi-metrics (especially, for so-called $d$-spaces
of Triebel; see \cite[p.\,189]{t06}).
\end{rem}

Throughout the whole paper, we always
assume that $\cx$ is an RD-space and $\mu(\cx)=\fz$.
In what follows, for any $x,\,y\in\cx$ and $r\in(0,\,\fz)$,
we set $V_r(x)\equiv\mu(B(x,\,r))$ and
$V(x,\,y)\equiv\mu(B(x,\,d(x,\,y)))$.

\subsection{Admissible functions}\label{s2.1}

We first introduce the notion of admissible functions.

\begin{defn}\rm\label{d2.2}
A positive function $\rho$ on $\cx$ is called admissible if there exist
positive constants $C_3$ and $k_0$ such that for all $x,\,y\in\cx$,
\begin{equation}\label{2.2}
\rho(y)\le C_3[\rho(x)]^{1/(1+k_0)}[\rho(x)+d(x,\,y)]^{k_0/(1+k_0)}.
\end{equation}
\end{defn}

Obviously, if $\rho$ is a constant function, then $\rho$ is admissible.
Another non-trivial class of admissible functions is given by the well-known reverse
H\"older class $\cb_q(\cx,\,d,\,\mu)$ (see, for example, \cite{g73,m72,s95} for
its definition on $\rn$,
and \cite{st} for its definition on spaces of homogenous type).
Recall that a nonnegative potential $U$ is said to belong to
$\cb_q(\cx,\,d,\,\mu)$ (for short, $\cb_q(\cx)$) with $q\in(1,\,\fz]$
if there exists a positive constant $C$
such that for all balls $B$,
$$\lf\{\frac1{\mu(B)}\int_B[U(y)]^q\,d\mu(y)\r\}^{1/q}
\le C\frac1{\mu(B)}\int_BU(y)\,d\mu(y)$$
with the usual modification when $q=\fz$.
It was proved in \cite[pp.\,8-9]{st} that if $U\in\cb_q(\cx)$ for some $q\in(1,\,\fz]$
and the measure $U(z)\,d\mu(z)$ has the doubling property,
then $U$ is an $\ca_p(\cx,\,d,\,\mu)$-weight for some $p\in[1,\,\fz)$
in the sense of Muckenhoupt,
and also $U\in \cb_{q+\ez}(\cx)$ for some $\ez>0$.
Here it should be pointed out that,  generally, $U\in\cb_q(\cx)$ cannot
imply the doubling property of $U(z)\,d\mu(z)$,
but when $\mu(B(x,\,r))$ increases continuous respect to $r$ for all $x\in\cx$,
$U\in\cb_q(\cx)$ does imply the doubling property
of $U(z)\,d\mu(z)$ by \cite[Theorem 17]{st}.
We also refer the reader to \cite{ma} for other
conditions to guarantee the doubling property of $U(z)\,d\mu(z)$.
Following \cite{s95}, for all $x\in\cx$, set
\begin{equation}\label{2.3}
\rho(x)\equiv\sup\lf\{r>0:\ \frac{r^2}{V_r(x)}
\int_{B(x,\,r)}U(y)\,d\mu(y)\le1\r\},
\end{equation}
where we recall that  $V_r(x)\equiv\mu(B(x,\,r))$
for all $x\in\cx$ and $r>0$.
Then we have the following conclusion.

\begin{prop}\label{p2.1}
Let $q\in(1\vee (n/2),\,\fz]$ and  $U\in \cb_q(\cx)$.
If the measure $U(z)\,d\mu(z)$ has the doubling property,
then $\rho$ as in \eqref{2.3} is an admissible function.
\end{prop}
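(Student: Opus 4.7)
Set $m_U(x, r) := r^2 V_r(x)^{-1} \int_{B(x,r)} U(z) \, d\mu(z)$, so that $\rho(x) = \sup\{r > 0 : m_U(x, r) \le 1\}$. The plan begins with the analytic core of the argument: for all $x \in \cx$ and $0 < r \le R$,
\begin{equation*}
m_U(x, r) \le C_0 (r/R)^{2 - n/q}\, m_U(x, R).
\end{equation*}
I would prove this by applying H\"older's inequality to $\int_{B(x,r)} U \,d\mu$, enlarging the resulting $L^q$-integral to the ball $B(x, R)$, then invoking the $\cb_q$-condition on $B(x, R)$ together with the upper doubling bound $V_R(x) \le C(R/r)^n V_r(x)$ from \eqref{2.1}. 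The hypothesis $q > 1 \vee (n/2)$ is used exactly to force the exponent $2 - n/q$ to be strictly positive.

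Rewriting the comparison as $m_U(x, R) \ge C_0^{-1}(R/r)^{2-n/q} m_U(x, r)$ for $R \ge r$, one sees $m_U(x, R) \to \fz$ as $R \to \fz$, so $\rho(x) \in (0, \fz)$; upper-semicontinuity of $R \mapsto m_U(x, R)$ gives $m_U(x, \rho(x)) \le 1$, while the comparison forces $m_U(x, r) > 1$ once $r$ exceeds a fixed multiple of $\rho(x)$, so that $m_U(x, \rho(x)) \sim 1$. Turning to \eqref{2.2}, I treat the short-range case $d(x,y) \le \rho(x)$ first: via $B(y, \rho(x)) \subset B(x, 2\rho(x))$ and the symmetric inclusion, the doubling of $\mu$ together with the hypothesis that $U\,d\mu$ is doubling yield $m_U(y, 2\rho(x)) \sim m_U(x, 2\rho(x)) \sim 1$, and a final application of the comparison lemma at $y$ pins $\rho(y) \sim \rho(x)$, from which \eqref{2.2} is immediate with any $k_0 > 0$.

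For the long-range case $s := d(x,y) > \rho(x)$, I would put $R = 2(s + \rho(x)) \sim s$, so that $B(x,R) \subset B(y, 2R)$ and both doublings combine to give $m_U(y, 2R) \sim m_U(x, R)$. Applying the comparison at $x$ with radii $\rho(x) \le R$ then produces $m_U(x, R) \gs (s/\rho(x))^{2-n/q}$, so $m_U(y, 2R)$ is much larger than $1$ when $s/\rho(x)$ is large. To convert this into the desired interpolation inequality, I would combine it with the matching upper bound $m_U(y, 2R) \ls (R/\rho(y))^{\beta}\, m_U(y, \rho(y)) \sim (R/\rho(y))^{\beta}$, obtained by iterating the doubling of $U\,d\mu$ (the exponent $\beta$ being controlled by the doubling dimensions of $\mu$ and of $U\,d\mu$); equating the two bounds yields $\rho(y) \ls \rho(x)^{1/(1+k_0)} s^{k_0/(1+k_0)}$ for the explicit value $k_0 = \beta/(2 - n/q) - 1 > 0$. \textbf{The main obstacle} lies in this last step: the comparison lemma is one-sided (it only bounds $m_U(x, R)/m_U(x, r)$ from below as $R$ grows), so supplying the matching upper bound requires a careful second use of the doubling of $U\,d\mu$, and the bookkeeping of the three constants in play — the doubling constants of $\mu$ and of $U\,d\mu$, together with the reverse H\"older constant — is what pins down the admissible $k_0$.
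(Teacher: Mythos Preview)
Your argument is essentially the paper's: the comparison inequality $m_U(x,r)\lesssim (r/R)^{2-n/q}\,m_U(x,R)$ is exactly (2.4), your upper bound $m_U(y,2R)\lesssim (R/\rho(y))^{\beta}$ with $\beta=2+n_1-\kappa$ is precisely the combination of the doubling of $U\,d\mu$ (the paper's (2.5)) with the lower inequality in (2.1), and your value $k_0=\beta/(2-n/q)-1$ coincides with the paper's $k_0=(n_1+2-\kappa)/(2-n/q)-1$.

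One cosmetic point worth tightening: you split the long-range case on $d(x,y)>\rho(x)$, whereas the paper splits on $d(x,y)\ge\rho(y)$. Your upper bound $m_U(y,2R)\lesssim (R/\rho(y))^{\beta}$ tacitly assumes $\rho(y)\le 2R$; if instead $\rho(y)>2R\gtrsim d(x,y)$, you are back in the short-range regime with the roles of $x$ and $y$ swapped, so $\rho(x)\sim\rho(y)$ and (2.2) is immediate. The paper's split on $\rho(y)$ sidesteps this sub-case automatically, but it is not a genuine obstacle.
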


\begin{proof}
For any fixed $y\in\cx$ and $0<r<R<\fz$, by the H\"older inequality,
$U\in\cb_q(\cx)$ and the doubling property of $\mu$, we have
\begin{eqnarray}
\label{2.4}\frac{r^2}{V_r(y)}\int_{B(y,\,r)}U(z)\,d\mu(z)
&&\ls r^2\lf\{\frac1{V_r(y)}\int_{B(y,\,r)}[U(z)]^q\,d\mu(z)\r\}^{1/q}\\
&&\ls r^2\lf[\frac{V_R(y)}{V_r(y)}\r]^{1/q}
\frac1{V_R(y)}\int_{B(y,\,R)}U(z)\,d\mu(z)\nonumber\\
&&\ls\lf(\frac rR\r)^{2-n/q}\frac{R^2}{V_R(y)}\int_{B(y,\,R)}U(z)\,d\mu(z).\nonumber
\end{eqnarray}

By the assumption that  $U(z)\,d\mu(z)$ has the doubling property,
so there exist positive constants $C$
and $n_1>\{(\kz-n/q)\vee 0\}$
such that for all $\lz>1$, $r>0$ and $x\in\cx$,
\begin{equation}\label{2.5}
\int_{B(x,\,\lz r)}U(z)\,d\mu(z)\le C\lz^{n_1} \int_{B(x,\, r)}U(z)\,d\mu(z).
\end{equation}
By \eqref{2.4} and the fact that $q>n/2$, there exists at least one $r>0$
such that
$$\frac{r^2}{V_r(y)}\int_{B(y,\,r)}U(z)\,d\mu(z)\le1$$
and
$$\lim_{R\to\fz}\frac{R^2}{V_R(y)}\int_{B(y,\,R)}U(z)\,d\mu(z)=\fz,$$
which imply that $0<\rho(y)<\fz$.
Thus, from \eqref{2.5}, it further follows that
\begin{equation}\label{2.6}
    \frac{[\rho(y)]^2}{V_{\rho(y)}(y)}\int_{B(y,\,\rho(y))}U(z)\,d\mu(z)\sim 1.
\end{equation}

Now we prove that $\rho$ satisfies \eqref{2.2}.
For any fixed $x,\,y\in\cx$,
if $d(x,\,y)<\rho(y)$, then by the doubling property of $\mu$ and \eqref{2.6}, we have
\begin{eqnarray*}
\frac{[\rho(y)]^2}{V_{\rho(y)}(x)}\int_{B(x,\,\rho(y))}U(z)\,d\mu(z)
&&\sim \frac{[\rho(y)]^2}{V_{\rho(y)}(y)}\int_{B(y,\,\rho(y))}U(z)\,d\mu(z)\sim1.
\end{eqnarray*}
This together with \eqref{2.4} implies that $\rho(y)\sim\rho(x)$
and hence, \eqref{2.2} holds in this case.
If $d(x,\,y)\ge\rho(y)$, then there exists $j\in\nn$ such that
 $2^{j-1}\rho(y)\le d(x,\,y)<2^j\rho(y)$.
Thus for any integer $k>j$, if we choose
$r_k\equiv2^{j-k}\rho(y)\in(0,\,\rho(y))$, then by \eqref{2.4}, \eqref{2.5} and \eqref{2.6}, we have
\begin{eqnarray*}
\frac{r_k^2}{V_{r_k}(x)}\int_{B(x,\,r_k)}U(z)\,d\mu(z)
&&\ls 2^{kn/q}
\frac{r_k^2}{V_{2^kr_k}(x)}\int_{B(x,\,2^kr_k)}U(z)\,d\mu(z)\\
&&\ls 2^{kn/q}2^{2(j-k)}
\frac{[\rho(y)]^2}{V_{2^j\rho(y) }(y)}\int_{B(y,\,2^j\rho(y))}U(z)\,d\mu(z)\\
&&\ls 2^{-k(2-n/q)}2^{-j(\kz-n_1-2)}.
\end{eqnarray*}
Notice that $q>n/2$ and $n_1>\kz-n/q$ imply that
$n_1+2-\kz> 2-n/q>0$. Let $k$ be the maximal positive integer no more than
$1 +j(n_1+2-\kz)/(2-n/q).$
Then
$$\frac{r_k^2}{V_{r_k}(x)}\int_{B(x,\,r_k)}U(z)\,d\mu(z)\ls 1,$$
which together with \eqref{2.4} implies that
$$\rho(x)\gs r_1\sim 2^{j-k}\rho(y)
\sim 2^{-j\{(n_1+2-\kz)/(2-n/q)-1\}}\rho(y).$$
Let $k_0\equiv (n_1+2-\kz)/(2-n/q)-1$. Then $k_0>0$ and
$$\rho(y)\ls[\rho(x)]^{1/(1+k_0)} [2^j\rho(y)]^{k_0/(1+k_0)}
\ls[\rho(x)]^{1/(1+k_0)}[d(x,\,y)]^{k_0/(1+k_0)},$$
which also implies that \eqref{2.2} holds in this case and hence,
completes the proof of Proposition \ref{p2.1}.
\end{proof}

We now establish some properties of admissible functions.

\begin{lem}\label{l2.1}
Let $\rho$ be an admissible function. Then

(i) for any $\wz C>0$, there exists a positive constant $C$, depending on $\wz C$,
such that if $d(x,\,y)\le \wz C\rho(x)$,
then
$C^{-1}\rho(y)\le \rho(x)\le C\rho(y);$

(ii) there exists a positive constant $C$ such that for all
$x, \, y\in\cx$,
$$C^{-1}[\rho(x)+d(x,\,y)]\le \rho(y)+d(x,\,y)\le C[\rho(x)+d(x,\,y)];$$

(iii) there exists a positive constant $C_4$ such that for all
$x, \, y\in\cx$,
$$\rho(y)\ge C_4[\rho(x)]^{1+k_0}[\rho(x)+d(x,\,y)]^{-k_0}.$$
\end{lem}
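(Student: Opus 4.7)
The plan is to derive all three assertions from the single admissibility inequality \eqref{2.2} by elementary rearrangement. I would prove (ii) first, since it is the cleanest consequence of \eqref{2.2}; then (iii) follows by exchanging the roles of $x$ and $y$ in \eqref{2.2} and invoking (ii); finally (i) is an immediate corollary of \eqref{2.2} together with (iii).

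For (ii), I would start from \eqref{2.2} and use the trivial estimate
\begin{equation*}
[\rho(x)]^{1/(1+k_0)}[\rho(x)+d(x,y)]^{k_0/(1+k_0)}\le[\rho(x)+d(x,y)]^{1/(1+k_0)+k_0/(1+k_0)}=\rho(x)+d(x,y),
\end{equation*}
which gives $\rho(y)\le C_3[\rho(x)+d(x,y)]$ and hence $\rho(y)+d(x,y)\le(C_3+1)[\rho(x)+d(x,y)]$. The reverse inequality follows by the symmetric argument with $x$ and $y$ interchanged.

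For (iii), I would apply \eqref{2.2} with the roles of $x$ and $y$ swapped to obtain $\rho(x)\le C_3[\rho(y)]^{1/(1+k_0)}[\rho(y)+d(x,y)]^{k_0/(1+k_0)}$. Raising both sides to the $(1+k_0)$-th power and solving for $\rho(y)$ yields
\begin{equation*}
\rho(y)\ge C_3^{-(1+k_0)}[\rho(x)]^{1+k_0}[\rho(y)+d(x,y)]^{-k_0}.
\end{equation*}
Applying (ii) to bound $\rho(y)+d(x,y)$ above by a constant multiple of $\rho(x)+d(x,y)$ then gives (iii) with $C_4\equiv C_3^{-(1+k_0)}(C_3+1)^{-k_0}$.

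For (i), the hypothesis $d(x,y)\le\wz C\rho(x)$ implies $\rho(x)+d(x,y)\le(1+\wz C)\rho(x)$; substituting into \eqref{2.2} produces the upper bound $\rho(y)\le C_3(1+\wz C)^{k_0/(1+k_0)}\rho(x)$, and substituting into (iii) yields the matching lower bound $\rho(y)\ge C_4(1+\wz C)^{-k_0}\rho(x)$. No step presents a genuine obstacle, as the argument is purely algebraic; the only subtlety is the ordering of the three parts, which must be arranged so that the factor $\rho(y)+d(x,y)$ arising in the derivation of (iii) can be converted to $\rho(x)+d(x,y)$ through (ii) without circularity.
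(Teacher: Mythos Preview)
Your proof is correct. The paper proves the three parts in the order (i)$\to$(ii)$\to$(iii): it first establishes (i) self-containedly (using \eqref{2.2} for the upper bound $\rho(y)\ls\rho(x)$, then swapping $x,y$ in \eqref{2.2} and using the already-proved upper bound to absorb the factor $[\rho(y)+d(x,y)]^{k_0/(1+k_0)}$ into $[\rho(x)]^{k_0/(1+k_0)}$), then proves (ii) by a case split $\rho(x)\le d(x,y)$ versus $\rho(x)>d(x,y)$ invoking (i) in the second case, and finally obtains (iii) from (ii) exactly as you do. Your reordering (ii)$\to$(iii)$\to$(i) works equally well; your direct argument for (ii) via the trivial bound $[\rho(x)]^{1/(1+k_0)}[\rho(x)+d(x,y)]^{k_0/(1+k_0)}\le\rho(x)+d(x,y)$ is in fact cleaner than the paper's case analysis, and deriving (i) from (iii) rather than independently is a harmless reshuffling. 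The mathematical content is the same elementary manipulation of \eqref{2.2}.
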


\begin{proof}
If $d(x,\,y)\le \wz C\rho(x)$, then
by \eqref{2.2}, $\rho(y)\ls\rho(x)$.
By \eqref{2.2} with exchanging $x$ and $y$ again, we have
$\rho(x)\ls [\rho(y)]^{1/(1+k_0)}[\rho(x)]^{k_0/(1+k_0)}$, which implies that
$\rho(x)\ls\rho(y)$. Thus (i) holds.

To prove (ii), if $\rho(x)\le d(x,\,y)$, then it is easy to see that
$\rho(x)+d(x,\,y)\ls \rho(y)+d(x,\,y);$ if
$\rho(x)>d(x,\,y)$, then by (i), $\rho(y)\sim \rho(x)$, which implies that
$$\rho(x)+d(x,\,y)\sim \rho(y)+d(x,\,y).$$ By symmetry, we have (ii).

To prove (iii), by \eqref{2.2} exchanging $x$ and $y$,  and (ii), we have
$$\rho(x)\ls[\rho(y)]^{1/(1+k_0)}[\rho(x)+d(x,\,y)]^{k_0/(1+k_0)},$$
which gives (iii). This finishes the proof of Lemma \ref{l2.1}.
\end{proof}

For each $m\in\zz$, let
$\cx_m\equiv\{x\in\cx:\ 2^{-(m+1)/2}<\rho(x)/8\le2^{-m/2}\}.$
Then, obviously, $\cx=\cup_{m\in\zz}\cx_m$.
Moreover, using some ideas from \cite{dz99} on $\rn$,
we have the following results.

\begin{lem}\label{l2.2}
There exists a positive constant $C_5$ such that for all $R\ge 2$ and
$m,\,m'\in\zz$, if $x\in\cx_m$ and $(\cx_{m'}\cap
B(x,\,2^{-m/2}R))\ne\emptyset$, then $|m'-m|\le C_5\log R$.
\end{lem}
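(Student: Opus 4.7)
The plan is to use the two-sided control of $\rho$ that is packaged in Lemma \ref{l2.1}, together with the defining bounds $\rho(x)\sim 2^{-m/2}$ and $\rho(y)\sim 2^{-m'/2}$ that the indexing sets $\cx_m$ and $\cx_{m'}$ provide, and then simply take $\log_2$ of the resulting power-of-$R$ inequalities.

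First I would fix $x\in\cx_m$ and choose any $y\in\cx_{m'}\cap B(x,2^{-m/2}R)$. From the definition of $\cx_m$ and $\cx_{m'}$ one has $\rho(x)\sim 2^{-m/2}$ and $\rho(y)\sim 2^{-m'/2}$, while $d(x,y)<2^{-m/2}R$; combined with $R\ge 2$, this gives $\rho(x)+d(x,y)\ls 2^{-m/2}R$.

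Next, for the bound $m-m'\le C\log R$, I would apply \eqref{2.2} directly:
\begin{equation*}
\rho(y)\le C_3[\rho(x)]^{1/(1+k_0)}[\rho(x)+d(x,y)]^{k_0/(1+k_0)}
\ls 2^{-m/2}R^{k_0/(1+k_0)}.
\end{equation*}
Since $\rho(y)\gs 2^{-m'/2}$, this yields $2^{(m-m')/2}\ls R^{k_0/(1+k_0)}$, hence $m-m'\ls \log_2 R$.

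For the reverse bound $m'-m\le C\log R$, I would invoke Lemma \ref{l2.1}(iii):
\begin{equation*}
\rho(y)\ge C_4[\rho(x)]^{1+k_0}[\rho(x)+d(x,y)]^{-k_0}
\gs (2^{-m/2})^{1+k_0}(2^{-m/2}R)^{-k_0}= 2^{-m/2}R^{-k_0}.
\end{equation*}
Combining with $\rho(y)\ls 2^{-m'/2}$ gives $2^{(m'-m)/2}\ls R^{k_0}$, hence $m'-m\ls\log_2 R$. Taking the larger of the two constants for $C_5$ and converting $\log_2$ to $\log$ finishes the argument.

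There is no real obstacle here, but the one thing to be careful about is making sure each $\ls$ absorbs only constants depending on $C_3$, $C_4$, $k_0$, and the doubling constants — never on $m$, $m'$, $x$, $y$, or $R$ — so that the final $C_5$ is genuinely universal. Everything else is a one-line application of \eqref{2.2}, Lemma \ref{l2.1}(iii), and the definition of $\cx_m$.
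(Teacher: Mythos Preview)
Your proof is correct and follows essentially the same approach as the paper: both use \eqref{2.2} for the upper bound on $\rho(y)$ and Lemma~\ref{l2.1}(iii) for the lower bound, then combine with $\rho(y)\sim 2^{-m'/2}$ to extract $|m-m'|\ls\log R$. Your write-up is slightly more explicit about separating the two directions, but the argument is the same.
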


\begin{proof}
If $x\in\cx_m$ and $y\in (\cx_{m'}\cap B(x,\,2^{-m/2}R))$, then by
\eqref{2.2} and Lemma \ref{l2.1} (iii), we have
$$R^{-k_0}2^{-m/2}\ls\rho(y)\ls R^{k_0/(1+k_0)}2^{-m/2},$$
which implies that $$R^{-k_0}2^{-m/2}\ls 2^{-m'/2}\ls
R^{k_0/(1+k_0)}2^{-m/2},$$
namely, $R^{-k_0}\ls 2^{(m-m')/2}\ls
R^{k_0/(1+k_0)}$. Thus, $|m'-m|\ls \log R$, which completes
the proof of Lemma \ref{l2.2}.
\end{proof}

\begin{lem} \label{l2.3}
There exist positive constant $C$ and subset
 $\{x_{(m,\,k)}:\ x_{(m,\,k)}\in\cx_m\}_{m\in\zz,\,k}$
such that for all $R\ge2$ and $m\in\zz$, $\cx_m\subset\lf[\cup_{k}
B(x_{(m,\,k)},\,2^{-m/2})\r]$ and $$\sharp\{(m',\,k'):\
(B(x_{(m,\,k)},\,R2^{-m/2})\cap
B(x_{(m',\,k')},\,R2^{-m'/2}))\ne\emptyset\}\le R^{C},$$
where $\sharp E$ denotes the cardinality of any set $E$.
\end{lem}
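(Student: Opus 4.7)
My plan is a standard maximal-disjoint-subfamily construction, combined with Lemma \ref{l2.2} and the doubling property to establish the counting bound. The proof splits naturally into three parts: building the family $\{x_{(m,k)}\}$, verifying the covering property, and controlling the overlap cardinality.

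For the construction, for each fixed $m\in\zz$, I apply Zorn's lemma to pick a maximal subset $\{x_{(m,k)}\}_k\subset\cx_m$ subject to the requirement that the balls $\{B(x_{(m,k)},2^{-m/2}/2)\}_k$ are pairwise disjoint. The covering statement $\cx_m\subset\bigcup_k B(x_{(m,k)},2^{-m/2})$ then follows immediately from maximality: if some $y\in\cx_m$ failed to lie in any $B(x_{(m,k)},2^{-m/2})$, then $d(y,x_{(m,k)})\geq 2^{-m/2}$ for every $k$, so $B(y,2^{-m/2}/2)$ would be disjoint from every $B(x_{(m,k)},2^{-m/2}/2)$, contradicting maximality.

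For the counting estimate, suppose $B(x_{(m,k)},R2^{-m/2})\cap B(x_{(m',k')},R2^{-m'/2})\neq\emptyset$. Then
$$d(x_{(m,k)},x_{(m',k')})< R2^{-m/2}+R2^{-m'/2}\leq 2R\,2^{-(m\wedge m')/2}.$$
Applying Lemma \ref{l2.2} to whichever of $x_{(m,k)},x_{(m',k')}$ realizes the minimum of $m,m'$ (both belong to $\cx_m$, $\cx_{m'}$ respectively), I obtain $|m-m'|\leq C_5\log(2R)\lesssim\log R$. Thus the set of admissible $m'$ has cardinality $\lesssim\log R$.

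For each such fixed $m'$, I count the admissible $k'$ as follows. Every such $x_{(m',k')}$ lies in $B(x_{(m,k)},R2^{-m/2}+R2^{-m'/2})\subset B(x_{(m,k)},2R\,2^{-(m\wedge m')/2})$, and since $|m-m'|\lesssim\log R$ we have $2^{-m'/2}\lesssim R^{c}\,2^{-(m\wedge m')/2}$ for a suitable $c$. Hence the disjoint balls $B(x_{(m',k')},2^{-m'/2}/2)$, whose centers all lie in this larger ball, are all contained in $B(x_{(m,k)},C'R^{c+1}\,2^{-(m\wedge m')/2})$. By the doubling property of $\mu$ iterated to the scale ratio $\sim R^{c+1}$, each such disjoint ball has measure at least a factor $R^{-C''}$ times the measure of the enclosing ball. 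Summing the disjointness inequality yields at most $R^{C''}$ admissible $k'$, and multiplying by the $\lesssim\log R$ admissible $m'$ (which is absorbed into a larger power of $R$ using $R\geq 2$) gives the claimed bound $R^C$.

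The only mildly delicate point is the symmetric use of Lemma \ref{l2.2}: one must apply it to the index with smaller exponent so that the ball radius $2^{-m/2}R$ in the statement of that lemma actually dominates the distance $d(x_{(m,k)},x_{(m',k')})$; otherwise one cannot invoke the conclusion. Everything else is a routine Vitali/doubling argument, and no new estimates beyond Lemmas \ref{l2.1} and \ref{l2.2} are needed.
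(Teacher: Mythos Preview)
Your proof is correct and follows essentially the same route as the paper's: a Vitali-type maximal disjoint family for the covering, Lemma~\ref{l2.2} applied at the smaller scale index to bound $|m-m'|$, and then a doubling/packing count for the $k'$ at each fixed $m'$, with the final $\log R$ factor absorbed into $R^C$. The only cosmetic differences are that the paper invokes the $5$-covering theorem (disjoint balls of radius $\tfrac15\,2^{-m/2}$) rather than Zorn's lemma with radius $\tfrac12\,2^{-m/2}$, and writes out the chain of ball inclusions a bit more explicitly; neither affects the argument.
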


\begin{proof}
For each fixed $m\in\zz$, since $\cx_m\subset \lf[\cup_{x\in\cx_m}
B(x,\,\frac152^{-m/2})\r],$ using
the standard $5$-covering theorem (see, for example, Theorem 1.2 in \cite{h99}), we obtain a
subset $\{x_{(m,\,k)}\}_{k}$ of $\cx_m$ such that
$$\cx_m\subset\lf\{ \bigcup_{x\in\cx_m}
B\lf(x,\,\frac152^{-m/2}\r)\r\}
\subset\lf\{ \bigcup_{k} B(x_{(m,\,k)},\,2^{-m/2})\r\},$$
and  $\{B(x_{(m,\,k)},\,\frac152^{-m/2})\}_{k}$ are disjointed.

Assume that $(B(x_{(m,\,k)},\,R2^{-m/2})\cap B(x_{(m',\,k')},\,R2^{-m'/2}))\ne\emptyset$.
If $m\le m'$, then
$$B(x_{(m,\,k)},\,2R2^{-m/2})\cap\cx_{m'}\ne\emptyset;$$
and if $m>m'$, then $(B(x_{(m',\,k')},\,2R2^{-m'/2})\cap\cx_{m})\ne\emptyset.$
Thus, by Lemma \ref{l2.2},
\begin{equation}\label{2.7}
|m-m'|\le C_5\log (2R).
\end{equation}

Moreover, for any fixed $m'$, if $y\in B(x_{(m',\,k')},\,R2^{-m'/2})$, then
\begin{eqnarray*}
d(x_{(m,\,k)},\,y)&&\le d(x_{(m,\,k)},\,x_{(m',\,k')})+d(x_{(m',\,k')},\,y)
\le [R+(2R)^{1+C_5/2}]2^{-m/2}.
\end{eqnarray*}
 This implies that
\begin{eqnarray*}
 B(x_{(m',\,k')},\,R2^{-m'/2})
 \subset B(x_{(m,\,k)},\,[R+(2R)^{1+C_5/2}]2^{-m/2})
 \subset B(x_{(m',\,k')},\,R^{4+2C_5}2^{-m'/2}).
\end{eqnarray*}
Set $\wz C\equiv 4+C_5/2$.
Observe that by the doubling property of $\mu$, we have
\begin{eqnarray*}
\mu\lf(B\lf(x_{(m',\,k')},\,\frac152^{-m'/2}\r)\r)
&&\ge \frac1{C_2}
\lf(\frac{5R^{\wz C}2^{-m/2}}{2^{-m'/2}}\r)^{-n}
\mu(B(x_{(m',\,k')},\,R^{\wz C}2^{-m'/2}))\\
&&\ge R^{C'} \mu(B(x_{(m,\,k)},\,[R+(2R)^{1+C_5/2}]2^{-m/2}))
\end{eqnarray*}
for some positive constant $C'$ independent of $R,\,m,\,m'$ and $k$ .
Thus, for fixed $m'$, by the disjointness of
$\{B(x_{(m',\,k')},\,\frac152^{-m'/2})\}_{k'}$,
we have
$$\sharp\{k':\
(B(x_{(m,\,k)},\,R2^{-m/2})\cap
B(x_{(m',\,k')},\,R2^{-m'/2}))\ne\emptyset\}\le R^C$$
for some positive constant $C$ independent of $R,\,m,\,m'$ and $k$.
This together with \eqref{2.7} implies that
$$\sharp\{(m',\,k'):\
(B(x_{(m,\,k)},\,R2^{-m/2})\cap
B(x_{(m',\,k')},\,R2^{-m'/2}))\ne\emptyset\}\le C_5R^C \log(2R),$$
which completes the proof of Lemma \ref{l2.3}.
\end{proof}

In what follows, we set
\begin{eqnarray}
\eta\in\ccc^1(\rr),&&\ \eta(t)\in[0,\,1]\  \mathrm{for\ all}\ t\in\rr, \label{2.8}\\
 &&\eta(t)=1\ \mathrm{when}\ |t|\le 1\ \mathrm{and} \ \eta(t)=0\ \mathrm{when}
 \ |t|\ge 2.\nonumber
\end{eqnarray}

\begin{lem}\label{l2.4}
There exist constant $C>0$ and functions
$\{\psi_{(m,\,k)}\}_{m\in\zz,\,k}$ such that

(i) $\supp \psi_{(m,\,k)}\subset B(x_{(m,\,k)},\,\rho(x_{m,\,k})/2)$ and
$0\le \psi_{(m,\,k)}(x)\le1$ for all $x\in\cx$;

(ii) $|\psi_{(m,\,k)}(x)-\psi_{(m,\,k)}(y)|\le Cd(x,\,y)
[\rho(x_{m,\,k})]^{-1}$ for all $x,\,y\in\cx$;

(iii) $\sum_{m\in\zz,\,k}\psi_{(m,\,k)}(x)=1$ for all $x\in\cx$.
\end{lem}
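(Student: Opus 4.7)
The plan is to obtain $\{\psi_{(m,\,k)}\}$ by normalizing suitable bump functions built from the cutoff $\eta$ in \eqref{2.8}. I would set
$$\phi_{(m,\,k)}(x) \equiv \eta\bigl(2^{m/2} d(x,\,x_{(m,\,k)})\bigr),$$
so that $\phi_{(m,\,k)} \equiv 1$ on $B(x_{(m,\,k)},\,2^{-m/2})$ and $\supp\phi_{(m,\,k)} \subset B(x_{(m,\,k)},\,2\cdot 2^{-m/2})$. Since $x_{(m,\,k)} \in \cx_m$ forces $\rho(x_{(m,\,k)}) > 4\sqrt{2}\cdot 2^{-m/2}$, this support sits inside $B(x_{(m,\,k)},\,\rho(x_{(m,\,k)})/2)$, and $\phi_{(m,\,k)}$ is globally Lipschitz with constant $\|\eta'\|_\infty 2^{m/2} \sim 1/\rho(x_{(m,\,k)})$. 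Then I would define $\Phi(x) \equiv \sum_{m\in\zz,\,k}\phi_{(m,\,k)}(x)$ and $\psi_{(m,\,k)} \equiv \phi_{(m,\,k)}/\Phi$. Lemma \ref{l2.3} tells us that the balls $\{B(x_{(m,\,k)},\,2^{-m/2})\}$ cover $\cx$, so $\Phi \ge 1$ everywhere; applied with $R = 2$, it also shows that at any given point at most a constant number of the $\phi_{(m',\,k')}$ are nonzero, so $\Phi \le C_0$. Properties (i) and (iii), together with $0 \le \psi_{(m,\,k)} \le 1$, are then immediate.

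For (ii) I would use the quotient identity
$$\psi_{(m,\,k)}(x) - \psi_{(m,\,k)}(y) = \frac{\phi_{(m,\,k)}(x) - \phi_{(m,\,k)}(y)}{\Phi(x)} - \psi_{(m,\,k)}(y) \cdot \frac{\Phi(x) - \Phi(y)}{\Phi(x)}.$$
When $d(x,\,y) \ge \rho(x_{(m,\,k)})$ the bound is trivial since $|\psi_{(m,\,k)}| \le 1$, so it suffices to treat $d(x,\,y) < \rho(x_{(m,\,k)})$. In that regime the first term is bounded by $\|\eta'\|_\infty 2^{m/2} d(x,\,y)$ directly. For the second term, the key observation is that if $\psi_{(m,\,k)}(y) \ne 0$ then $y \in \supp\phi_{(m,\,k)}$, hence both $x$ and $y$ lie in $B(x_{(m,\,k)},\,C_6\cdot 2^{-m/2})$ for some constant $C_6$. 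This places any index $(m',\,k')$ contributing to $\Phi(x)-\Phi(y)$ in the situation that $x_{(m',\,k')}$ lies within a constant multiple of $2^{-m/2}\vee 2^{-m'/2}$ of $x_{(m,\,k)}$. Applying Lemma \ref{l2.2} then confines $|m-m'|$ to $O(1)$, so every contributing $\phi_{(m',\,k')}$ has Lipschitz constant $2^{m'/2} \sim 2^{m/2} \sim 1/\rho(x_{(m,\,k)})$; Lemma \ref{l2.3} keeps the number of such indices bounded. Summing yields $|\Phi(x) - \Phi(y)| \ls 2^{m/2} d(x,\,y)$, and (ii) follows from $\Phi(x) \ge 1$.

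The main subtlety lies in the bookkeeping for $\Phi(x) - \Phi(y)$: naively summing the individual Lipschitz constants $2^{m'/2}$ over all contributing indices would blow up without an \emph{a priori} restriction on the range of admissible $m'$. Lemma \ref{l2.2} is precisely what localizes $m'$ to an $O(1)$-neighborhood of $m$ as soon as $(m',\,k')$ can interact with $\supp\phi_{(m,\,k)}$, and this is what forces the constant in (ii) to depend only on $\rho(x_{(m,\,k)})$ rather than varying wildly across scales. Everything else reduces to the standard partition-of-unity manipulations together with the covering/overlap structure already packaged into Lemma \ref{l2.3}.
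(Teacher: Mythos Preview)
Your construction is identical to the paper's: define $\eta_{(m,\,k)}(x)\equiv\eta(2^{m/2}d(x_{(m,\,k)},\,x))$ and normalize by the sum $\sum_{m',\,k'}\eta_{(m',\,k')}$. The paper simply asserts that (i)--(iii) are ``easy to show'' and omits the verification; you have correctly supplied the details, in particular the use of Lemma~\ref{l2.2} to pin $|m-m'|=O(1)$ and Lemma~\ref{l2.3} to bound the overlap, which together control the Lipschitz constant of $\Phi$ in terms of $2^{m/2}\sim[\rho(x_{(m,\,k)})]^{-1}$.
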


\begin{proof}
Let $\eta$ be as in \eqref{2.8}.
For each $m\in\zz$ and $k$, and all $x\in\cx$, set
$\eta_{(m,\,k)}(x)\equiv\eta(2^{m/2}d(x_{(m,\,k)},\,x))$ and
$$\psi_{(m,\,k)}(x)\equiv\frac{\eta_{(m,\,k)}(x)}
{\sum_{m'\in\zz,\,k'}\eta_{(m',\,k')}(x)}.$$
Then it is easy to show that
$\{\psi_{(m,\,k)}\}_{m\in\zz,\,k}$ satisfies (i) through (iii),
which completes the proof of Lemma \ref{l2.4}.
\end{proof}

In what follows, we always simply denote  $\psi_{(m,\,k)}$ and
$B(x_{(m,\,k)},\,\rho(x_{(m,\,k)})/2)$, respectively, by $\psi_\az$
and $B_\az$.

\subsection{Hardy spaces $H^1(\cx)$ and their localized variants\label{s2.2}}

The following notion of
approximations of the identity
on RD-spaces was first introduced in \cite{hmy2},
whose existence was given in Theorem 2.6 of \cite{hmy2}.
Recall that $V_r(x)\equiv\mu(B(x,\,r))$
and $V(x,\,y)\equiv\mu(B(x,\,d(x,\,y)))$ for all $x,\,y\in\cx$ and $r>0$.

\begin{defn}\rm\label{d2.3}
Let $\ez_1\in(0,\,1]$, $\ez_2>0$ and $\ez_3>0$. A sequence
$\{S_k\}_{k\in\zz}$ of bounded
linear integral operators on $L^2(\cx)$ is called an
approximation of the identity of order $(\ez_1,\,\ez_2,\,\ez_3)$
(for short, $(\ez_1,\,\ez_2,\,\ez_3)$-$\ati$), if there
exists a positive constant $C_6$ such that for all $k\in\zz$ and
$x,\, x',\, y,\,y'\in\cx$, $S_k(x,y)$, the integral kernel of
$S_k$, is a measurable function from $\cx\times\cx$ into $\cc$
satisfying
\begin{enumerate}
\vspace{-0.3cm}
\item[(i)] $|S_k(x,y)|\le C_6\frac 1{V_{2^{-k}}(x)+V(x,\,y)}
[\frac{2^{-k}}{2^{-k}+d(x,\,y)}]^{\ez_2};$
\vspace{-0.3cm}
\item[(ii)] $|S_k(x,y)-S_k(x',y)|\le C_6 [\frac{d(x,\,x')}{2^{-k}+d(x,\,y)}]^\ezl
\frac 1{V_{2^{-k}}(x)+V(x,\,y)}[\frac{2^{-k}}{2^{-k}+d(x,\,y)}]^{\ez_2}$
for $d(x,x')\le[2^{-k}+d(x,\,y)]/2;$
\vspace{-0.3cm}
\item[(iii)] Property (ii) also holds with $x$ and $y$
interchanged;
\vspace{-0.3cm}
\item[(iv)] $|[S_k(x,y)-S_k(x,y')]-[S_k(x',y)-S_k(x',y')]|\le C_6
[\frac{d(x,\,x')}{2^{-k}+d(x,\,y)}]^\ezl
[\frac{d(y,\,y')}{2^{-k}+d(x,\,y)}]^\ezl$
\newline
$\times\frac 1{V_{2^{-k}}(x)+V(x,\,y)}
[\frac{2^{-k}}{2^{-k}+d(x,\,y)}]^{\ez_3}$ for
$d(x,x')\le[2^{-k}+d(x,\,y)]/3$
and $d(y,y')\le[2^{-k}+d(x,\,y)]/3;$
\vspace{-0.3cm}
\item[(v)] $\int_\cx S_k(x,z)\,d\mu(z)=1=\int_\cx S_k(z,y)\,d\mu(z)$.
\end{enumerate}
\end{defn}

\begin{rem}\label{r2.2}\rm
(i) In \cite{hmy2}, for any $N>0$, it was proved that
there exists $(1,\,N,\,N)$-$\ati$ $\{S_k\}_{k\in\zz}$
with bounded support in the sense that
$S_k(x,y)=0$ when $d(x,\,y)>\wz C2^{-k}$,
where $\wz C$ is a fixed positive constant independent of $k$.
In this case, $\{S_k\}_{k\in\zz}$ is called a $1$-$\ati$
with bounded support; see \cite{hmy2}.

(ii) If a sequence $\{\wz S_t\}_{t>0}$ of bounded
linear integral operators on $L^2(\cx)$ satisfies (i) through (v)
of Definition \ref{d2.3}
with  $2^{-k}$ replaced by $t$, then
$\{\wz S_t\}_{t>0}$ is called a continuous
approximation of the identity of order $(\ez_1,\,\ez_2,\,\ez_3)$
(for short, continuous $(\ez_1,\,\ez_2,\,\ez_3)$-$\ati$).
For example,
if $\{S_k\}_{k\in\zz}$ is an $(\ez_1,\,\ez_2,\,\ez_3)$-$\ati$ and if we set
$\wz S_t(x,\,y)\equiv S_k(x,\,y)$ for $t\in(2^{-k-1},\,2^{-k}]$ with $k\in\zz$,
then  $\{\wz S_t\}_{t>0}$ is a continuous $(\ez_1,\,\ez_2,\,\ez_3)$-$\ati$.

(iii) If $S_k$ (resp. $\wz S_t$) satisfies (i), (ii), (iii) and (v) of
Definition \ref{d2.3}, then $S_kS_k $ (resp. $\wz S_t\wz S_t$)
satisfies the conditions (i) through (v) of
Definition \ref{d2.3}; see \cite{hmy06}.
\end{rem}

The following spaces of test functions play an important role in the theory of
function spaces on space of homogeneous type; see \cite{hmy06, hmy2}.

\begin{defn}\rm\label{d2.4}
Let $x\in\cx$, $r>0$, $\bz\in(0,\,1]$ and $\gz>0$. A function $f$
on $\cx$ is said to belong to the space of test functions, $\cg(x,\,r,\,\bz,\,\gz)$,
if there exists a positive constant $C_f$ such that
\begin{enumerate}
    \vspace{-0.3cm}
    \item[(i)] $|f(y)|\le C_f\frac 1{V_r(x)+V(x,\,y)}
    [\frac r{r+d(x,\,y)}]^\gz$ for all $y\in\cx$;
    \vspace{-0.3cm}
    \item[(ii)] $|f(y)-f(y')|\le C_f[\frac {d(y,\,y')}{r+d(x,\,y)}]^\bz
    \frac 1{V_r(x)+V(x,\,y)}[\frac r{r+d(x,\,y)}]^\gz$
    for all $y,\, y'\in \cx $ satisfying that $d(y,\,y')\le [r+d(x,\,y)]/2$.
    \vspace{-0.3cm}
\end{enumerate}
Moreover, for any $f\in \cg(x,\,r,\,\bz,\,\gz)$,   its norm is defined by
$$\|f\|_{\cg(x,\,r,\,\bz,\,\gz)}\equiv\inf\lf\{C_f:\, (i)\  and \ (ii)\
 hold \r\}.$$
\end{defn}

It is easy to see that $\cg(x,\,r,\,\bz,\,\gz)$ is a Banach space.
Let $\ez\in(0,\,1]$ and $\bz,\,\gz\in(0,\,\ez]$.
For applications, we further define the space $\cg_0^\ez(x,\,r,\,\bz,\,\gz)$ to be
the completion of the set $\cg(x,\,r,\,\ez,\,\ez)$ in $\cg(x,\,r,\,\bz,\,\gz)$.
For $f\in\cg_0^\ez(x,\,r,\,\bz,\,\gz)$,
define $\|f\|_{\cg_0^\ez(x,\,r,\,\bz,\,\gz)}\equiv\|f\|_{\cg(x,\,r,\,\bz,\,\gz)}$.
Let
$\lf(\cg_0^\ez(x,\,r,\,\bz,\,\gz)\r)'$
be the set of all continuous linear functionals on
$\cg_0^\ez(x,\,r,\,\bz,\,\gz)$,
and as usual, endow $\lf(\cg_0^\ez(x,\,r,\,\bz,\,\gz)\r)'$
with the weak $\ast$-topology.
Throughout the whole
paper, we fix $x_1\in\cx$ and write
$\cg(\bz,\,\gz)\equiv\cg(x_1,\,1,\,\bz,\,\gz)$,
and $\lf(\cg_0^\ez(\bz,\,\gz)\r)'\equiv\lf(\cg_0^\ez(x_1,\,1,\,\bz,\,\gz)\r)'$.

The following results concerning
approximations of the identity
were proved in \cite[Proposition 2.7]{hmy2}
and Lemma 3.5 through Lemma 3.7 and Proposition 3.8 in \cite{gly1}.

\begin{lem}\label{l2.5}
Let $\ez_1\in(0,\,1]$,  $\ez_2,\,\ez_3>0$, $\ez\in(0,\,\ezl\wedge\ezz)$ and
$\{S_k\}_{k\in\zz}$ be an $(\ez_1,\,\ez_2,\,\ez_3)$-$\ati$.

(i) If $p\in[1,\fz]$, then $\{S_k\}_{k\in\zz}$ is
a sequence of bounded operators on $L^p(\cx)$ uniformly in $k$.
Moreover, for any $p\in[1,\fz)$ and $f\in L^p(\cx)$,
$\|S_k(f)-f\|_{L^p(\cx)}\to0$ as $k\to\fz$.

(ii) If $\bz,\,\gz\in(0,\,\ez)$, then $\{S_k\}_{k\in\zz}$
is a sequence of bounded operators on $\cg^\ez_0(\bz,\,\gz)$
uniformly in $k$. Moreover, for any $f\in\cg^\ez_0(\bz,\,\gz)$,
$\|S_k(f)-f\|_{\cg^\ez_0(\bz,\,\gz)}\to0$ as $k\to\fz$;
for any $f\in(\cg^\ez_0(\bz,\,\gz))'$,
$S_k(f)$ converges to $f$ in the weak $\ast$-topology of
$(\cg^\ez_0(\bz,\,\gz))'$ as $k\to\fz$.
\end{lem}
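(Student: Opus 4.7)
The plan is to deduce (i) and (ii) from the five kernel axioms of Definition \ref{d2.3} together with the doubling property of $\mu$. For (i), I would first establish the uniform $L^1$ bounds
$$\sup_{k\in\zz}\sup_{x\in\cx}\int_\cx|S_k(x,y)|\,d\mu(y)\ls 1$$
and the symmetric estimate with the roles of $x$ and $y$ exchanged. To see this, split the integration domain into $B(x,2^{-k})$ and the dyadic annuli $A_j\equiv B(x,2^{j-k})\setminus B(x,2^{j-1-k})$ for $j\ge 1$. On $A_j$, Definition \ref{d2.3}(i) gives $|S_k(x,y)|\ls [V_{2^{-k}}(x)+V(x,y)]^{-1}\,2^{-j\ezz}$, while the doubling property bounds $\mu(A_j)$ by a geometric power of $2^j$ times $V_{2^{-k}}(x)$; since the denominator $V_{2^{-k}}(x)+V(x,y)$ grows at least like $2^{j\kz}V_{2^{-k}}(x)$ on $A_j$ by the reverse doubling of Definition \ref{d2.1}(ii), the sum in $j$ converges as long as $\ezz>0$. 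The Schur test then yields the uniform $L^p$ boundedness for every $p\in[1,\fz]$.

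For the convergence $\|S_kf-f\|_{L^p(\cx)}\to 0$ when $p<\fz$, I would use axiom (v) to rewrite
$$S_k f(x)-f(x)=\int_\cx S_k(x,y)[f(y)-f(x)]\,d\mu(y).$$
For $f$ continuous with compact support, the scale localization of the kernel together with uniform continuity of $f$ gives pointwise (and in fact uniform) decay as $k\to\fz$, and a support argument (the effective support of $S_kf-f$ lies in a fixed enlargement of $\supp f$) upgrades this to $L^p$-convergence. A standard three-$\ez$ reduction using the density of $C_c(\cx)$ in $L^p(\cx)$ and the uniform bound from the previous step extends the conclusion to arbitrary $f\in L^p(\cx)$.

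For (ii), to establish uniform boundedness of $S_k$ on $\cg_0^\ez(\bz,\gz)$ I would fix $f\in\cg(x_1,1,\bz,\gz)$ of unit norm and verify the two defining estimates for $S_k f$ at the reference pair $(x_1,1,\bz,\gz)$. For the size bound $|S_kf(y)|$, split the integral over $z$ into $\{d(y,z)\le[1+d(x_1,y)]/2\}$, where the test-function size bound on $f$ applies with $d(x_1,z)\sim d(x_1,y)$, and its complement, where the kernel decay carries the estimate. The regularity bound is analogous but invokes axiom (ii) for $S_k$, with a further case split according to whether $d(y,y')\le[2^{-k}+d(x_1,y)]/2$. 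Norm convergence $\|S_kf-f\|_{\cg_0^\ez(\bz,\gz)}\to 0$ is then reduced by density of $\cg(x_1,1,\ez,\ez)$ in $\cg_0^\ez(\bz,\gz)$ to the case $f\in\cg(x_1,1,\ez,\ez)$; the extra H\"older exponent $\ez>\bz,\gz$ of such $f$, combined with the scale localization of $S_k$, produces a decaying factor $2^{-k\dz}$ for some $\dz>0$ in both the size and regularity seminorms, giving the convergence.

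The weak-$\ast$ convergence in $(\cg_0^\ez(\bz,\gz))'$ is handled by duality: axiom (iii) together with both equalities in (v) shows that the transpose operator $S_k^*$ with kernel $(x,y)\mapsto S_k(y,x)$ is again an $(\ezl,\ezz,\ezt)$-$\ati$, hence $\|S_k^*\phi-\phi\|_{\cg_0^\ez(\bz,\gz)}\to 0$ for every $\phi\in\cg_0^\ez(\bz,\gz)$ by what was proved above; then for $f\in(\cg_0^\ez(\bz,\gz))'$ we get $\la S_kf,\phi\ra=\la f,S_k^*\phi\ra\to\la f,\phi\ra$. The main obstacle I expect is the test-function boundedness step of (ii): the interaction among the fixed reference point $x_1$, the scale $1$ of the target norm, and the variable scale $2^{-k}$ of the kernel forces a careful multi-region case analysis that distributes the exponents $\bz,\gz,\ezl,\ezz$ across regions determined by the three quantities $d(x_1,y)$, $d(y,z)$, and $2^{-k}$; once this bookkeeping is done, the remaining steps are essentially routine.
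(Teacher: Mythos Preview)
The paper does not supply a proof of this lemma; it simply records that the results were established in \cite[Proposition 2.7]{hmy2} and in Lemmas 3.5--3.7 and Proposition 3.8 of \cite{gly1}. Your sketch is the standard route taken in those references: Schur-type $L^1$ bounds and density for (i), direct verification of the $\cg(\bz,\gz)$ estimates for (ii), and duality via $S_k^*$ for the weak-$\ast$ statement. So at the level of strategy your proposal matches what is behind the citation.

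One small computational slip: in your annulus estimate for $\int_\cx|S_k(x,y)|\,d\mu(y)$ you invoke reverse doubling to say the denominator grows like $2^{j\kz}V_{2^{-k}}(x)$, while doubling gives $\mu(A_j)\ls 2^{jn}V_{2^{-k}}(x)$. That would leave a factor $2^{j(n-\kz)}$ to beat, and since only $\ez_2>0$ is assumed the series need not converge. The correct (and simpler) observation is that on $A_j$ one has $V(x,y)\sim V_{2^{j-k}}(x)$ by doubling alone, and $\mu(A_j)\le V_{2^{j-k}}(x)$, so the ratio $\mu(A_j)/[V_{2^{-k}}(x)+V(x,y)]$ is bounded uniformly in $j$; the decay $2^{-j\ez_2}$ then sums for any $\ez_2>0$. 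Reverse doubling plays no role here. With this fix the rest of your outline goes through as written.
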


\begin{defn}\rm\label{d2.5}
Let $\ez_1\in(0,\,1]$,\,$\ez_2,\,\ez_3>0$, $\ez\in(0,\,\ez_1\wedge\ez_2)$ and
$\{S_k\}_{k\in\zz}$ be an $(\ez_1,\,\ez_2,\,\ez_3)$-$\ati$.
Let $\rho$ be an admissible function.
For any $\bz,\,\gz\in(0,\,\ez)$,
$f\in (\cg_0^\ez(\bz,\,\gz))'$ and $x\in\cx$,
define

\noindent(i) the radial maximal function $S^+(f)$ by
$ S^+(f)(x)\equiv\sup_{k\in\zz} |S_k(f)(x)|;$

\noindent(ii) the radial maximal function $ S^+_{\rho}(f)$
associated to $\rho$ by
$$S^+_{\rho}(f)(x)\equiv\sup_{\{k\in\zz,\,2^{-k}<\rho(x)\}} |S_k(f)(x)|;$$

\noindent(iii) the grand maximal function $G^{(\ez,\,\bz,\,\gz)}(f)$ by
$$ G^{(\ez,\,\bz,\,\gz)}(f)(x)\equiv\sup\lf\{|\langle f,\,\varphi\rangle|:\,
\varphi\in\cg^\ez_0(\bz,\,\gz),\
\|\vz\|_{\cg(x,\,r,\,\bz,\,\gz)}\le1\ \mathrm{for\ some}\ r>0\r\};$$

\noindent(iv)  the grand maximal function
$G^{(\ez,\,\bz,\,\gz)}_{\rho}(f)$ associated to $\rho$ by
$$G^{(\ez,\,\bz,\,\gz)}_{\rho}(f)(x)\equiv\sup\lf\{|\langle f,\,\varphi\rangle|:\,
\varphi\in\cg^\ez_0(\bz,\,\gz),\
\|\vz\|_{\cg(x,\,r,\,\bz,\,\gz)}\le1\ \mathrm{for\ some}\ r\in (0,\rho(x))\r\}.$$
\end{defn}

When there exists no ambiguity, we simply write
$G^{(\ez,\,\bz,\,\gz)}(f)$ and $G^{(\ez,\,\bz,\,\gz)}_{\rho}(f)$
as $G(f)$ and $G_{\rho}(f)$, respectively.
Notice that $\|S_k(x,\,\cdot)\|_{\cg^\ez_0(x,\,2^{-k},\,\bz,\,\gz)}\le C_6$ for all
$x\in\cx$ and $\bz,\,\gz\in(0,\,\ez)$.
It is easy to see that for all $x\in\cx$,
$S^+_{\rho}(f)(x)\le S^+(f)(x)\le C_6G(f)(x)$
and
\begin{equation}\label{2.9}
S^+_{\rho}(f)(x)\le C_6G_{\rho}(f)(x)\le C_6G(f)(x).
\end{equation}

\begin{defn}\rm\label{d2.6}
Let $\ez\in(0,\,1)$, $\bz,\,\gz\in(0,\,\ez)$ and $\rho$ be an admissible function.

(i) The Hardy space $H^1(\cx)$ is defined by
$$H^1(\cx)\equiv\lf\{f\in(\cg^\ez_0(\bz,\,\gz))': \
\|f\|_{H^1(\cx)}\equiv\|G(f)\|_{L^1(\cx)}<\fz\r\}.$$

(ii) The Hardy space $H^1_\rho (\cx)$ associated to
$\rho$ is defined by
$$H^1_\rho (\cx)\equiv\lf\{f\in(\cg^\ez_0(\bz,\,\gz))': \
\|f\|_{H^1_\rho (\cx)}
\equiv\|G_{\rho}(f)\|_{L^1(\cx)}<\fz\r\}.$$
\end{defn}

\begin{defn}\rm\label{d2.7}
Let $q\in(1,\,\fz]$.

\noindent (i) A measurable function $a$ is called a $(1,\,q)$-atom
associated to the ball $B(x,\,r)$ if

(A1) $\supp a\subset B(x,\,r)$ for some $x\in\cx$ and $r>0$,

(A2) $\|a\|_{L^q(\cx)}\le [\mu(B(x,\,r))]^{1/q-1}$,

(A3) $\int_\cx a(x)\,d\mu(x)=0$.

\noindent (ii) A measurable function $a$ is called a
$(1,\,q)_{\rho}$-atom
associated to the ball $B(x,\,r)$ if $r< \rho(x)$ and
$a$ satisfies (A1) and (A2),
and when $r<\rho(x)/4$, $a$ also satisfies (A3).
\end{defn}

\begin{defn}\rm\label{d2.8}
Let  $\ez\in(0,\,1)$, $\bz,\,\gz\in(0,\,\ez)$ and $q\in(1,\,\fz]$.

(i) The space $H^{1,\,q}(\cx)$  is defined to be the
set of all $f=\sum_{j\in\nn}\lz_ja_j$ in $(\cg^\ez_0(\bz,\,\gz))'$,
where $\{a_j\}_{j\in\nn}$ are $(1,\,q)$-atoms and $\{\lz_j\}_{j\in\nn}\subset\cc$
such that $\sum_{j\in\nn}|\lz_j|<\fz$. For any $f\in H^{1,\,q}(\cx)$,
define $\|f\|_{H^{1,\,q}(\cx)}\equiv\inf\{\sum_{j\in\nn}|\lz_j|\}$, where
the infimum is taken over all the above decompositions of $f$.

(ii) The space $H^{1,\,q}_\fin(\cx)$ is defined to be the
set of all $f=\sum_{j=1}^N\lz_ja_j$,
where $N\in\nn$, $\{\lz_j\}_{j\in\nn}\subset\cc$,
and $\{a_j\}_{j=1}^N$ are $(1,\,q)$-atoms
when $q<\fz$ or continuous $(1,\,\fz)$-atoms when $q=\fz$.
For any $f\in H^{1,\,q}_\fin(\cx)$,
define $\|f\|_{H^{1,\,q}_\fin(\cx)}
\equiv\inf\{\sum_{j=1}^N|\lz_j|\}$, where
the infimum is taken over all the above finite decompositions of $f$.

(iii) The space $H^{1,\,q}_{\rho}(\cx)$ is defined
as in (i) with $(1,\,q)$-atoms replaced by $(1,\,q)_{\rho}$-atoms.

(iv) The space $H^{1,\,q}_{\rho,\,\fin}(\cx)$ is defined
as in (ii) with
$(1,\,q)$-atoms replaced by $(1,\,q)_{\rho}$-atoms.
\end{defn}

The atomic Hardy spaces $H^{1,\,q}(\cx)$ were originally introduced in \cite{cw77}.
Moreover, in \cite{gly1,gly2}, the following results were established.

\begin{thm}\label{t2.1}
(i)  Let $\ez\in(0,\,1)$ and $\bz,\,\gz\in(0,\,\ez)$. Then
the following are equivalent:
(\rm{a}) $f\in H^1(\cx)$;  (\rm{b}) $f\in(\cg^\ez_0(\bz,\,\gz))'$ and
  $\|S^+(f)\|_{L^1(\cx)}<\fz$; (\rm{c})  $f\in H^{1,\,q}(\cx)$
with $q\in(1,\,\fz]$. Moreover, for any fixed $q\in(1,\,\fz]$ and all $f\in H^1(\cx)$,
$$\|f\|_{H^1(\cx)}\sim\|S^+(f)\|_{L^1(\cx)}\sim \|f\|_{H^{1,\,q}(\cx)}.$$

(ii) If $q\in(1,\,\fz]$, then for all $f\in H^{1,\,q}_\fin(\cx)$,
 $\|f\|_{H^{1,\,q}_\fin(\cx)}\sim\|f\|_{H^1(\cx)}$.
\end{thm}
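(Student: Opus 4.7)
\textbf{Plan for Theorem \ref{t2.1}.} The result is attributed to \cite{gly1,gly2}, so my plan parallels the argument developed there. For part (i), I will establish the chain (a) $\Rightarrow$ (b) $\Rightarrow$ (c) $\Rightarrow$ (a), with (c) interpreted for a single fixed $q\in(1,\fz]$ and then transferred to all $q$ by bootstrapping through $H^1(\cx)$.

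The implication (a) $\Rightarrow$ (b) is immediate from the pointwise inequality $S^+(f)(x)\le C_6 G(f)(x)$ recorded in \eqref{2.9}. For (c) $\Rightarrow$ (a), I would reduce to showing the uniform bound $\|G(a)\|_{L^1(\cx)}\le C$ for every $(1,q)$-atom $a$ supported in a ball $B\equiv B(x_0,r)$, and then sum over the atomic decomposition. On $2B$ the bound follows by H\"older's inequality together with the $L^q$-boundedness of $G$ (via its pointwise domination by the Hardy-Littlewood maximal function) and the size bound (A2). Off $2B$, I would exploit cancellation (A3) by writing $\langle a,\vz\rangle=\langle a,\vz-\vz(x_0)\rangle$ for any test function $\vz\in\cg(y,t,\bz,\gz)$ of unit norm, and then applying the H\"older regularity of Definition \ref{d2.4}(ii) together with the doubling property to sum over dyadic annuli around $B$.

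The main obstacle is (b) $\Rightarrow$ (c). The plan is to invoke the discrete Calder\'on reproducing formula from \cite{hmy2,gly1}, writing $f=\sum_k D_k\wz D_k(f)$ for a suitable pair of integral kernels with both smoothness and cancellation, and then to perform an $H^1$-style Calder\'on-Zygmund decomposition over the level sets $\boz_k\equiv\{x\in\cx:\,G(f)(x)>2^k\}$. Each $\boz_k$ will be covered by Whitney balls carrying a smooth partition of unity; the atoms $a_j$ are built by grouping the summands $D_k\wz D_k(f)\psi_j$ whose main support falls in a given Whitney region. Cancellation of the atoms is inherited from that of the reproducing kernels, while the $L^q$-size estimate (A2) follows from careful scale bookkeeping. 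The hard step within this step is producing $\sum_j|\lz_j|\ls\|S^+(f)\|_{L^1(\cx)}$, which first requires upgrading the radial bound to a grand maximal bound via a pointwise inequality of the form $G(f)(x)\ls\{M([S^+(f)]^s)(x)\}^{1/s}$ for some $s\in(0,1)$, followed by integrating over the sum of level-set measures.

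For part (ii), the estimate $\|f\|_{H^1(\cx)}\ls\|f\|_{H^{1,\,q}_\fin(\cx)}$ is free from part (i). For the reverse, given $f\in H^{1,\,q}_\fin(\cx)$ (so $f$ has bounded support and bounded $L^q$-norm), part (i) supplies an infinite decomposition $f=\sum_j\lz_j a_j$ with $\sum_j|\lz_j|\ls\|f\|_{H^1(\cx)}$. My plan is to truncate at a large index $N$, obtaining a finite main part $\sum_{j\le N}\lz_j a_j$ plus a tail $g_N\equiv\sum_{j>N}\lz_j a_j$, and then to show that $g_N$ is itself, up to a controlled multiplicative constant, a single $(1,q)$-atom: its support sits in a fixed ball determined by $\supp f$, its $L^q$-norm is small by the fast decay of the tail, and it inherits the cancellation. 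When $q=\fz$ the additional continuity requirement is met by regularizing $g_N$ via convolution against an approximation of the identity while preserving the mean-zero property. Adding this one extra atom to the finite sum yields the required finite-atomic norm bound.
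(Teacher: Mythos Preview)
The paper does not prove Theorem \ref{t2.1}; it is quoted verbatim as a known result from \cite{gly1,gly2} (see the sentence preceding the theorem), so there is no in-paper proof to compare against. Your plan is a faithful outline of the argument in those references: the chain (a)$\Rightarrow$(b)$\Rightarrow$(c)$\Rightarrow$(a), the atom-by-atom estimate for (c)$\Rightarrow$(a), and the reproducing-formula/Calder\'on--Zygmund construction for (b)$\Rightarrow$(c) are exactly what \cite{gly1,gly2} do.

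One small wrinkle in your sketch of (ii) for $q=\fz$: on an RD-space there is no convolution, and applying an $\ati$ operator $S_k$ to the tail $g_N$ will enlarge its support. The actual route in \cite{gly1} is different: one shows that when $f$ is continuous with compact support, the specific atomic decomposition produced in the proof of (b)$\Rightarrow$(c) already yields \emph{continuous} atoms (the building blocks $\wz D_k$ have continuous kernels), so no post-hoc regularization of $g_N$ is needed. Your mollification idea can be made to work with the bounded-support $1$-$\ati$ of Remark \ref{r2.2}(i), but you would need to track the support enlargement and argue that $S_k(g_N)$ is close to $g_N$ in $L^\fz$ on a fixed ball, which is more delicate than the direct approach.
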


We finally point out that by Definitions \ref{d2.6} and \ref{d2.8} above, the
spaces $H^1(\cx)$, $H^1_\rho(\cx)$, $H^{1,\,q}(\cx)$ and $H^{1,\,q}_\rho(\cx)$
seem to depend on the choices of $\ez\in(0,\,1)$ and $\bz,\,\gz\in(0,\,\ez)$.
However, in Remark \ref{r3.1} below, we show that all these spaces
are independent of the choices of $\ez\in(0,\,1)$
and $\bz,\,\gz\in(0,\,\ez)$, which is the reason why we omit the
parameters $\ez,$ $\bz$ and $\gz$ when mentioning them.

\section{Atomic decomposition characterizations of $H^1_\rho (\cx)$}\label{s3}

We begin with the following relations concerning the Hardy spaces in
Definition \ref{d2.6} and Definition \ref{d2.8}
and the Lebesgue space $L^1(\cx)$. Recall that the symbol $\subset$
means continuous embedding.

\begin{lem}\label{l3.1}
Let $q\in(1,\,\fz]$. Then

(i) $H^{1,\,q}(\cx)\subset H_{\rho}^{1,\,q}(\cx)\subset H^1_\rho (\cx)
\subset L^1(\cx)$;

(ii) $H_{\rho}^{1,\,q}(\cx)=H_{\rho}^{1,\,\fz}(\cx)$
with equivalent norms independent of $\rho$.
\end{lem}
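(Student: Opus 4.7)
The plan is to prove the three inclusions of (i) from left to right and then reduce (ii) to a localized Calder\'on--Zygmund argument. Two tools recur throughout: the partition of unity $\{\psi_\az\}$ from Lemma \ref{l2.4}, which localizes at the scale set by $\rho$, and the pointwise bound $|S_k(f)(x)|\le C_6G_\rho(f)(x)$ for $2^{-k}<\rho(x)$ already noted in \eqref{2.9}.

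\textbf{The three inclusions of (i).} For $H^{1,\,q}(\cx)\subset H^{1,\,q}_\rho(\cx)$ I decompose an arbitrary $(1,\,q)$-atom $a$ supported in $B(x_0,\,r_0)$ into $(1,\,q)_\rho$-atoms with summable coefficients: if $r_0<\rho(x_0)$ then $a$ is itself a $(1,\,q)_\rho$-atom, since the mean-zero condition (A3) inherited from $(1,\,q)$-atoms is strictly stronger than what Definition \ref{d2.7}(ii) asks; otherwise the decomposition $a=\sum_\az a\psi_\az$ does the job, as each summand is supported in $B_\az$, a ball of radius $\rho(x_\az)/2$ strictly between $\rho(x_\az)/4$ and $\rho(x_\az)$ (so that mean zero is not required), with summability of the rescaling factors following from H\"older together with the bounded overlap of $\{B_\az\}$ coming from Lemma \ref{l2.3}. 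For $H^{1,\,q}_\rho(\cx)\subset H^1_\rho(\cx)$, I prove a uniform estimate $\|G_\rho(a)\|_{L^1(\cx)}\le C$ for every $(1,\,q)_\rho$-atom $a$ in $B_0=B(x_0,\,r_0)$, splitting the integral over $2B_0$ (handled by H\"older and the $L^q$-bound for $G_\rho$) and over $\cx\setminus 2B_0$: cancellation from (A3) yields the usual decay when $r_0<\rho(x_0)/4$, while for $r_0\ge\rho(x_0)/4$ Lemma \ref{l2.1}(iii) forces $\rho(x)$ to shrink as $d(x,\,x_0)$ grows, so the constraint $r<\rho(x)$ built into $G_\rho$ truncates the far integral on its own. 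For $H^1_\rho(\cx)\subset L^1(\cx)$, fix an $(\ezl,\,\ezz,\,\ezt)$-$\ati$ $\{S_k\}_{k\in\zz}$: since $\|S_k(x,\,\cdot)\|_{\cg(x,\,2^{-k},\,\bz,\,\gz)}\le C_6$, the pointwise bound $|S_k(f)(x)|\le C_6G_\rho(f)(x)$ holds for every $x$ as soon as $2^{-k}<\rho(x)$, and Lemma \ref{l2.5}(ii) gives $S_k(f)\to f$ in the weak $*$-topology of $(\cg_0^\ez(\bz,\,\gz))'$; restricting to the superlevel sets $\{\rho>1/M\}$, passing to the limit and using a Fatou-type argument identifies $f$ with an $L^1$ function satisfying $|f|\le C_6G_\rho(f)$ almost everywhere.

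\textbf{Part (ii) and the anticipated obstacle.} The inclusion $H^{1,\,\fz}_\rho(\cx)\subset H^{1,\,q}_\rho(\cx)$ is an immediate H\"older estimate. For the converse, I decompose each $(1,\,q)_\rho$-atom $a$ in $B=B(x_0,\,r_0)$ into $(1,\,\fz)_\rho$-atoms with bounded $\ell^1$-norm. When $r_0\ge\rho(x_0)/4$, I first peel off the average by writing $a=(a-\bar a_B\chi_B)+\bar a_B\chi_B$; a H\"older estimate on $\bar a_B$ shows that $\bar a_B\chi_B$ is, up to a bounded scalar, a $(1,\,\fz)_\rho$-atom for which mean zero is not required, reducing matters to the mean-zero remainder, which is a genuine $(1,\,q)$-atom. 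Applying Theorem \ref{t2.1}(i) to this remainder supplies a classical $(1,\,\fz)$-atomic decomposition whose constituent sub-atoms, by inspection of the underlying Calder\'on--Zygmund construction, live in balls inside a fixed dilate of $B$; Lemma \ref{l2.1}(i) then yields the comparability of $\rho$ across this dilate, upgrading each sub-atom to a $(1,\,\fz)_\rho$-atom. The main obstacle I expect is precisely this last step: verifying that the classical atomic decomposition can be realized so that every sub-atom sits inside a bounded enlargement of $B$, since only then does Lemma \ref{l2.1}(i) allow the passage from $(1,\,\fz)$- to $(1,\,\fz)_\rho$-atoms. The secondary delicate point is the Fatou-type identification of $f$ with an $L^1$ function in the third inclusion of (i), since $f$ is a priori only a distribution.
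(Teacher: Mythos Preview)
Your treatment of the first two inclusions in (i) and the reduction in (ii) to mean-zero atoms matches the paper. Two points, however, deserve comment.

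\textbf{The inclusion $H^1_\rho(\cx)\subset L^1(\cx)$.} A Fatou-type argument does not work here as stated. You have the domination $|S_k(f)(x)|\le C_6\,G_\rho(f)(x)$ whenever $2^{-k}<\rho(x)$, and the weak-$*$ convergence $S_k(f)\to f$ in $(\cg_0^\ez(\bz,\gz))'$ from Lemma~\ref{l2.5}(ii), but neither yields pointwise a.e.\ convergence of $S_k(f)$; since $f$ is a priori only a distribution there is no function to which Fatou could be applied. The paper closes this gap differently: the domination makes $\{S_k(f)\chi_{\{2^{-k}<\rho(\cdot)\}}\}_{k}$ uniformly integrable and hence relatively weakly compact in $L^1(\cx)$; Eberlein--\v{S}mulian then produces a subsequence converging weakly in $L^1(\cx)$ to some $g$, and one checks $\langle g,\psi\rangle=\langle f,\psi\rangle$ first for $\psi\in\cg_0^\ez(\bz,\gz)$ with bounded support (using that $2^{-k_j}<\rho$ on $\supp\psi$ eventually, via Lemma~\ref{l2.1}(iii)) and then by density. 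You correctly flag this step as delicate, but the mechanism you propose is not the right one.

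\textbf{Part (ii): an unnecessary detour.} Once the atom (or the remainder $a-\bar a_B\chi_B$) is a genuine $(1,q)$-atom, the paper simply invokes Theorem~\ref{t2.1}(i) to get $\|a\|_{H^{1,\fz}(\cx)}\ls 1$ and then applies the \emph{first} inclusion of (i), already established, to conclude $\|a\|_{H^{1,\fz}_\rho(\cx)}\ls 1$. There is no need to know where the individual $(1,\fz)$-sub-atoms live: the first inclusion of (i) handles $(1,\fz)$-atoms of arbitrary radius by chopping them against $\{\psi_\az\}$. So the obstacle you anticipate---controlling the supports of the Calder\'on--Zygmund pieces inside a fixed dilate of $B$---never arises. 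Your route would also work, but it requires reopening the proof of Theorem~\ref{t2.1}(i), which the paper avoids.
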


\begin{proof}
To see $H^{1,\,q}(\cx)\subset H_{\rho}^{1,\,q}(\cx)$,
we only need to prove that if $a$ is a $(1,\,q)$-atom supported in
$B(x_0,\,r_0)$ with $ r_0\ge\rho(x_0)$, then $a\in H^{1,\,q}_{\rho}(\cx)$.
In fact, by Lemma \ref{l2.4}, we write
$a\equiv\sum_{\az}\psi_\az a$ pointwise. Recall
that $\{\psi_\az\}_\az$ is as in Lemma \ref{l2.4}.
From Lemma \ref{l2.3}, it is easy to see that $a=\sum_\az\psi_\az a$ holds in
$(\cg_0^\ez(\bz,\,\gz))'$ with $\ez,\,\bz,\,\gz$ as in Definition \ref{d2.8}.
Let
$$\lz_\az\equiv[\mu(B_\az)]^{1-1/q}\|\psi_\az a\|_{L^q(\cx)}.$$
If $\lz_\az=0$, set $a_\az\equiv0$; if $\lz_\az\ne0$,
set $a_\az\equiv(\lz_\az)^{-1}\psi_\az a$. Notice that
by Lemma \ref{l2.1} (i), if $(B_\az\cap B(x_0,\,r_0))\ne\emptyset$,
then $B_\az\subset B(x_0,\,Cr_0)$.
Thus, $a_\az$ is a $(1,\,q)_{\rho}$-atom associated to the ball
$B_\az\equiv B(x_\az,\,\rho(x_\az)/2)$, and by the
H\"older inequality and Lemma \ref{l2.3}, we have
$$\sum_\az\lz_\az\ls
\|a\|_{L^q(\cx)}\lf[\sum_{\az}\mu(B_\az)\r]^{1/q'}
\ls[\mu(B(x_0,\,r_0))]^{1/q-1}[\mu(B(x_0,\,Cr_0))]^{1/q'}\ls1.$$
This means that $a\in H^{1,\,q}_{\rho}(\cx)$ and
$\|a\|_{H^{1,\,q}_{\rho}(\cx)}\ls1.$ Thus,
$H^{1,\,q}(\cx)\subset H^{1,\,q}_{\rho}(\cx)$.

To prove $H_{\rho}^{1,\,q}(\cx)\subset H^1_\rho (\cx)$,
by the definition of $G_{\rho}$,
it suffices to prove
that for all $(1,\,q)_{\rho}$-atoms $a$,
$\|G_{\rho}(a)\|_{L^1(\cx)}\ls1$.
In fact, if $a$ is a $(1,\,q)$-atom, then it is known that
$\|G_{\rho}(a)\|_{L^1(\cx)}\ls\|G(a)\|_{L^1(\cx)}\ls1$.
If $\int_\cx a(x)\,d\mu(x)\ne0$, assuming that $\supp a\subset B(x_0,\,r_0)$, then
$\rho(x_0)/4\le r_0<\rho(x_0)$.
Since $G_{\rho}(a)\ls M(a)$,
where $M$ is the Hardy-Littlewood maximal operator on $\cx$,
then $G_{\rho}$ is bounded on $L^q(\cx)$ (see \cite{cw71,cw77}); then by the
H\"older inequality, we have
$$\|G_{\rho}(a)\|_{L^1(B(x_0,\,4\rho(x_0))}\ls
[\mu(B(x_0,\,\rho(x_0)))]^{1/q'}\|a\|_{L^q(B(x_0,\,4\rho(x_0)))}\ls1,$$
where and in what follows, for any set $E\subset\cx$, we write
$$\|f\|_{L^q(E)}\equiv \lf\{\int_E|f(x)|^q\,d\mu(x)\r\}^{1/q}.$$
For $x\notin B(x_0,\,4\rho(x_0))$, since for any
$\psi\in \cg_0^\ez(\bz,\,\gz)$ with $\|\psi\|_{\cg^\ez_0(x,\,r,\,\bz,\,\gz)}\le1$ and
$r<\rho(x)$, we have
$$\lf|\int_\cx a(y)\psi(y)\,d\mu(y)\r|\ls \int_\cx |a(y)|\frac1{V(x,\,y)}
\lf[\frac{\rho(x)}{d(x,\,y)}\r]^\gz\,d\mu(y)
\ls \frac 1{V(x,\,x_0)}
\lf[\frac{\rho(x)}{d(x,\,x_0)}\r]^\gz,$$
which together with \eqref{2.2}
implies that
$$G_{\rho}(a)(x)\ls \frac 1{V(x,\,x_0)}
\lf[\frac{\rho(x)}{d(x,\,x_0)}\r]^\gz\ls \frac 1{V(x,\,x_0)}
\lf[\frac{\rho(x_0)}{d(x,\,x_0)}\r]^{\gz/(1+k_0)}.$$
Thus, $\|G_{\rho}(a)\|_{L^1(\cx\setminus B(x_0,\,4\rho(x_0)))}\ls1$ and, therefore,
$\|G_{\rho}(a)\|_{L^1(\cx)}\ls1.$ This shows that
$H^{1,\,q}_{\rho}(\cx)\subset H^1_\rho (\cx)$.

To prove $H^1_\rho (\cx)
\subset L^1(\cx)$, assume that $f\in H^1_\rho (\cx)$.
By \eqref{2.9} and Definition \ref{d2.8},
we have that $\|S^+_{\rho}(f)\|_{L^1(\cx)}\ls
 \|f\|_{H^1(\cx)}$, which means that
$\{S_k(f)\chi_{\{2^{-k}<\rho(\cdot)\}}\}_{k\in\zz}$ is a bounded set in $L^1(\cx)$.
Thus, by the proof of \cite[Theorem III. C. 12]{w91},
$\{S_k(f)\chi_{\{2^{-k}<\rho(\cdot)\}}\}_{k\in\zz}$ is
relatively weakly compact in $L^1(\cx)$. This together with
the Eberlein-$\rm\breve{S}$mulian theorem (see, for example,
\cite[Theorem II. C. 3]{w91}) implies that there exist a subsequence
$\{S_{k_j}(f)\chi_{\{2^{-k_j}<\rho(\cdot)\}}\}_{j\in\nn}$ of
$\{S_k(f)\chi_{\{2^{-k}<\rho(\cdot)\}}\}_{k\in\zz}$ and a measurable
function $g\in L^1(\cx)$ such that
$\{S_{k_j}(f)\chi_{\{2^{-k_j}<\rho(\cdot)\}}\}_{j\in\nn}$ weakly converges to $g$
in $L^1(\cx)$ and hence in $(\cg_0^\ez(\bz,\gz))'$ with
$\ez,$ $\bz$ and $\gz$ as in Definition \ref{d2.6}. From this, it is easy to follow that
$$\|g\|_{L^1(\cx)}\le\|S^+_{\rho}(f)\|_{L^1(\cx)}\ls
 \|f\|_{H^1_\rho (\cx)}.$$
Denote by $\cg_{0,b}^\ez(\bz,\gz)$ the set of functions
in $\cg_0^\ez(\bz,\gz)$ with bounded support.
For any $\psi\in\cg_{0,b}^\ez(\bz,\gz)$,
assume that $\supp \psi\subset B(x_1,\,r)$.
By Lemma \ref{l2.1} (iii), there exists $j_1\in\nn$ such that
$2^{-j_1}\le \inf_{y\in B(x_1,\,r)}\rho(y)$.
Therefore, by Lemma \ref{l2.5} (ii),
$$\langle g,\psi\rangle=\lim_{j\to\fz}\langle S_{k_j}(f)\chi_{2^{-k_j}
<\rho(\cdot)},\,\psi\rangle
= \lim_{j\to\fz}\langle S_{k_j}(f),\,\psi\rangle=\langle f,\,\psi\rangle.$$
On the other hand, it is easy to show that $\cg_{0,b}^\ez(\bz,\gz)$
is dense in $\cg_0^\ez(\bz,\gz)$, which further implies
that $f=g$ in $(\cg_0^\ez(\bz,\gz))'$. In this sense,
we say $f\in L^1(\cx)$. Thus (i) holds.

To prove (ii), by Definition \ref{d2.8}, obviously,
$H_{\rho}^{1,\,\fz}(\cx)\subset H_{\rho}^{1,\,q}(\cx)$
and the inclusion is continuous.
Conversely, it suffices to prove that if $a$ is any $(1,\,q)_{\rho}$-atom
supported in $B\equiv B(x_0,\,r_0)$, then
$a\in H_{\rho}^{1,\,\fz}(\cx)$ and
$\|a\|_{H^{1,\,\fz}_{\rho}(\cx)}\ls1$.
In fact, if $a$ is a $(1,\,q)$-atom, then by (i) of this
lemma and Theorem \ref{t2.1} (i),
$a\in H^{1,\,\,q}(\cx)=H^{1,\,\fz}(\cx)\subset H^{1,\,\fz}_{\rho}(\cx)$,
and $\|a\|_{H^{1,\,\fz}_{\rho}(\cx)}\ls \|a\|_{H^{1,\,\fz}(\cx)}\ls1$.
If $\int_\cx a(x)\,d\mu(x)\ne0$, then $[a-a_B\chi_{B}]/2$ is a $(1,\,q)$-atom, where
$a_B\equiv\frac1{\mu(B)}\int_Ba(y)\,d\mu(y).$
Thus $a-a_B\chi_{B}\in H^{1,\,\fz}_{\rho}(\cx)$ with
$\|a-a_B\chi_{B}\|_{H^{1,\,\fz}_{\rho}(\cx)}\ls1$.
Since $|a_B|\le[\mu(B)]^{-1}$ and $\rho(x_0)/4\le r<\rho(x_0)$, we know that
$a_B\chi_B$ is a $(1,\,\fz)_{\rho}$-atom. Thus, $a\in H^{1,\,\fz}_{\rho}(\cx)$ and
$\|a\|_{H^{1,\,\fz}_{\rho}(\cx)}\ls1$.
This gives (ii), which completes the proof of Lemma \ref{l3.1}.
\end{proof}

\begin{rem}\label{r3.1}\rm
(i) Observe that in Definition \ref{d2.8}, if $\sum_{j\in\nn}\lz_ja_j$
converges to $f$ in $(\cg^\ez_0(\bz,\,\gz))'$, where
$\{\lz_j\}_{j\in\nn}\subset\cc$ such that $\sum_{j\in\nn}|\lz_j|<\fz$
and $\{a_j\}_{j\in\nn}$ are $(1,\,q)$-atoms or $(1,\,q)_\rho$-atoms,
then by Lemma \ref{l3.1}, $\sum_{j\in\nn}\lz_ja_j$ also converges
to $\wz f$ in $L^1(\cx)$,
where $f$ and $\wz f$ coincide in $(\cg^\ez_0(\bz,\,\gz))'$.
By identifying $f$ with $\wz f$, if we replace the distribution space
$(\cg^\ez_0(\bz,\,\gz))'$ in Definition \ref{d2.8} with $L^1(\cx)$,
we still obtain the same atomic Hardy spaces, which further implies that
the spaces $H^{1,\,q}(\cx)$ and $H^{1,\,q}_{\rho}(\cx)$
are independent of the choices of $\ez\in(0,\,1)$ and $\bz,\,\gz\in(0,\,\ez)$.
This is the reason why we omit the parameters $\ez,\,\bz,\,\gz$,
when we mention the atomic Hardy spaces $H^{1,\,q}(\cx)$ and
$H^{1,\,q}_{\rho}(\cx)$.

(ii) Notice that Theorem \ref{t2.1} shows that $H^{1,\,q}(\cx)=H^1(\cx)$.
By (i) of this remark, we know that the spaces $H^{1}(\cx)$,
whose definitions seem to depend on the choices of $\ez\in(0,\,1)$
and $\bz,\,\gz\in(0,\,\ez)$, are actually equivalent.
Thus, the space $H^{1}(\cx)$ is independent of the choices
of $\ez\in(0,\,1)$ and $\bz,\,\gz\in(0,\,\ez)$.

(iii) Similarly, if we can prove $H^{1,\,q}_\rho (\cx)=H^1_\rho(\cx)$,
then the space  $H^{1}_\rho(\cx)$ is also independent of the choices
of $\ez\in(0,\,1)$ and $\bz,\,\gz\in(0,\,\ez)$. We do prove
$H^{1,\,q}_\rho (\cx)=H^1_\rho(\cx)$ in Theorem \ref{t3.2}
below without using the fact that the space $H^{1}_\rho(\cx)$
is independence of the choices of $\ez\in(0,\,1)$ and $\bz,\,\gz\in(0,\,\ez)$.
\end{rem}

To obtain an atomic decomposition characterization of $H^1_\rho (\cx)$,
we first construct a kernel function on $\cx\times\cx$
by subtly developing some ideas of Coifman presented in \cite{djs}
(see also \cite{hmy2}).

\begin{prop}\label{p3.1}
Let $\rho$ be an admissible function.
There exist a nonnegative function
$K_\rho$ on $\cx\times\cx$ and a positive constant $C$ such that

(i) $K_\rho(x,\,y)=0$ if $d(x,\,y)>C[\rho(x)\wedge\rho(y)]$
and $K_\rho(x,\,y)\le C\frac1{V_{\rho(x)}(x)+V_{\rho(y)}(y)}$
for all $x,\,y\in\cx$;

(ii) $K_\rho(x,\,y)=K_\rho(y,\,x)$ for all $x,\,y\in\cx$;

(iii) $|K_\rho(x,\,y)-K_\rho(x,\,y')|\le C
\frac{d(y,\,y')}{\rho(x)}\frac1{V_{\rho(x)}(x)+V_{\rho(y)}(y)}$
for all $x,\, y,\,y'\in\cx$ with $d(y,\,y')\le[\rho(x)+d(x,\,y)]/2$;

(iv) $|[K_\rho(x,\,y)-K_\rho(x,\,y')]-[K_\rho(x',\,y)-K_\rho(x',\,y')]|\le C
\frac{d(x,\,x')}{\rho(x)}\frac{d(y,\,y')}{\rho(x)}
\frac1{V_{\rho(x)}(x)+V_{\rho(y)}(y)}$
for all $x,\,x', \, y,\,y'\in\cx$ with
$d(x,\,x')\le[\rho(y)+d(x,\,y)]/3$ and
$d(y,\,y')\le[\rho(x)+d(x,\,y)]/3$;

(v) $\int_\cx K_\rho(x,\,y)\,d\mu(x)=1$ for all $y\in\cx$.
\end{prop}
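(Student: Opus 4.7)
The plan is to follow Coifman's construction from \cite{djs}: build a symmetric, smooth kernel whose integral is already comparable to $1$, and then renormalize via an iteration that preserves the size and regularity bounds. The key structural input is Lemma \ref{l2.1}(i), which guarantees that $\rho$ is essentially constant on balls of radius $c\rho(x)$ for $c$ small.

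Concretely, choose $c>0$ sufficiently small, and with $\eta$ as in \eqref{2.8}, set
$$h(x,y)\equiv\frac{\eta(d(x,y)/(c\rho(x)))\,\eta(d(x,y)/(c\rho(y)))}{V_{\rho(x)}(x)+V_{\rho(y)}(y)}.$$
This is manifestly symmetric and vanishes when $d(x,y)>2c[\rho(x)\wedge\rho(y)]$. For small $c$, Lemma \ref{l2.1}(i) yields $\rho(x)\sim\rho(y)$ on the support of $h$, and doubling then gives $V_{\rho(x)}(x)\sim V_{\rho(y)}(y)$, so the size bound in (i) is immediate. The regularity estimates (iii) and (iv) reduce by the product rule to the smoothness of $\eta\in\ccc^1(\rr)$ applied to $d(x,y)/(c\rho(x))$ and analogous expressions; since on the relevant support $\rho(x')\sim\rho(y)\sim\rho(y')\sim\rho(x)$ by Lemma \ref{l2.1}, each difference costs at most a factor $d(x,x')/\rho(x)$ or $d(y,y')/\rho(x)$, matching exactly what (iii) and (iv) demand. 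The same reasoning controls the $y$-dependence of the denominator via doubling.

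To obtain (v), I would first verify that $m(x)\equiv\int_\cx h(x,y)\,d\mu(y)\sim1$ uniformly: the upper bound comes from the support/size estimates just established, and the lower bound is obtained by restricting the integral to $d(x,y)\le c\rho(x)/4$, where both cutoffs equal $1$ and doubling yields a uniform positive constant. Then I apply the Coifman iteration: set $h_0\equiv h$, $m_k(x)\equiv\int_\cx h_k(x,y)\,d\mu(y)$, and
$$h_{k+1}(x,y)\equiv\frac{h_k(x,y)}{\sqrt{m_k(x)\,m_k(y)}}.$$
Each $h_k$ stays symmetric with the same support and size bounds, each $m_k$ stays comparable to $1$, and a direct estimate should give $\|m_{k+1}-1\|_{L^\fz(\cx)}\le\theta\,\|m_k-1\|_{L^\fz(\cx)}$ for some fixed $\theta\in(0,1)$. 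The $h_k$ then converge uniformly to a symmetric limit $K_\rho$ satisfying $\int_\cx K_\rho(x,y)\,d\mu(x)=1$, which by symmetry is (v).

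The main obstacle will be showing that the regularity bounds (iii) and (iv) survive the iteration with constants independent of $k$. This requires that each $m_k$ is Lipschitz on the scale $\rho$, i.e., $|m_k(x)-m_k(x')|\ls d(x,x')/\rho(x)$ whenever $d(x,x')\ls\rho(x)$. Such a bound follows from the $x$-regularity of $h_k$ combined with the fact that the $y$-support of $h_k(x,\cdot)$ has radius comparable to $\rho(x)$, so the $x$-difference of $m_k$ picks up exactly a $1/\rho(x)$ factor. Propagating this through the iteration with uniform constants will use the geometric decay of $\|m_k-1\|_{L^\fz(\cx)}$ to absorb the cumulative losses introduced at each renormalization.
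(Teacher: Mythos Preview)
Your approach differs substantially from the paper's, and there is a genuine gap in it.

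The paper does not iterate. It writes down an explicit kernel in one shot: with $h(t)\equiv\eta(2t)$, set $T_\rho(f)(u)\equiv\int_\cx h(d(u,w)/\rho(w))f(w)\,d\mu(w)$ and $\wz T_\rho(f)(u)\equiv\int_\cx h(d(u,w)/\rho(u))f(w)\,d\mu(w)$, and then
\[
K_\rho(x,y)\equiv\frac1{T_\rho(1)(x)}\int_\cx h\!\lf(\frac{d(x,z)}{\rho(z)}\r)\frac1{\wz T_\rho\lf(\tfrac1{T_\rho(1)}\r)(z)}\,h\!\lf(\frac{d(z,y)}{\rho(z)}\r)d\mu(z)\,\frac1{T_\rho(1)(y)}.
\]
Property (v) is then an exact identity obtained by Fubini: integrating in $x$ collapses the inner normalization against $\wz T_\rho(1/T_\rho(1))(z)$, and the remaining $z$-integral collapses against $T_\rho(1)(y)$. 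The size and regularity estimates (i)--(iv) follow by direct computation from the Lipschitz property of $h$ and the fact that $\rho$ is essentially constant on the support (Lemma \ref{l2.1}(i)). No limits, no uniform-in-$k$ bookkeeping.

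The gap in your approach is the claimed one-step contraction $\|m_{k+1}-1\|_{L^\fz}\le\theta\,\|m_k-1\|_{L^\fz}$ with a fixed $\theta<1$. Linearizing your iteration around $m_k\equiv1$ gives $\ez_{k+1}\approx\tfrac12(I-H_k)\ez_k$, where $H_k$ is the integral operator with kernel $h_k$ and $\ez_k\equiv m_k-1$. But $\|\tfrac12(I-H_k)\|_{L^\fz\to L^\fz}$ is at best $1$, not less: the crude bound $m_{k+1}(x)\le\sqrt{m_k(x)/\inf m_k}$ and the symmetric lower bound give only $\sup m_{k+1}/\inf m_{k+1}\le\sup m_k/\inf m_k$, i.e.\ non-expansion, not strict contraction. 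To get actual decay you would need to exploit cancellation in $(I-H_k)\ez_k$ coming from the smoothness of $\ez_k$ at scale $\rho$; but the natural Lipschitz bound on $m_k$ is only $|m_k(x)-m_k(x')|\ls d(x,x')/\rho(x)$ with an implicit constant of order $1$, not of order $\|\ez_k\|_{L^\fz}$, so this does not close. Establishing convergence of your Sinkhorn-type scheme would require a genuinely new argument (spectral gap, Hilbert metric, or a two-step analysis), and propagating (iii)--(iv) through the limit with $k$-independent constants is a further layer you have only sketched. The paper's explicit formula bypasses all of this.
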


\begin{proof}
Let $\eta$ be as in \eqref{2.8} and $h(t)\equiv\eta(2t)$ for all
$t\in\rr$.
For any locally integrable function $f$ on $\cx$ and $u\in\cx$, define
\begin{equation}\label{3.1}
T_\rho (f)(u)
\equiv\int_\cx h\lf(\frac{d(u,\,w)}{\rho(w)}\r)f(w)\,d\mu(w)
\end{equation}
and
\begin{equation*}
\wz T_\rho (f)(u)
\equiv\int_\cx h\lf(\frac{d(u,\,w)}{\rho(u)}\r)f(w)\,d\mu(w).
\end{equation*}

We first claim that for any fixed constant $\wz C>1$,
if $d(x,\,u)\le \wz C\rho(x)$, then
\begin{equation}\label{3.2}
T_\rho (1)(u)\sim V_{\rho(x)}(x)\sim\wz T_\rho(1)(u),
\end{equation}
where the equivalent constants depend only on $\wz C$,\, $C_2$ and $C_3$.
To see that, notice that for any $u\in\cx$, by \eqref{2.8} and \eqref{2.1},
it is  easy to see that $V_{\rho(u)}(u)\sim \wz T_\rho (1)(u).$
Since $\rho(w)\sim\rho(u)$ for all
$w\in \cx$ with $d(x,\,u)<\rho(w)$ via Lemma \ref{l2.1} (i),
by \eqref{2.8} and the doubling property of $\mu$,
we also have
$T_\rho (1)(u)\sim V_{\rho(u)}(u)$.
Moreover, if $d(x,\,u)\le \wz C\rho(x)$, then by Lemma \ref{l2.1} (i),
we have $\rho(x)\sim\rho(u)$, which together with \eqref{2.1}
implies that
 $V_{\rho(x)}(x)\sim V_{\rho(x)}(u)\sim V_{\rho(u)}(u)$.
This shows the above claim \eqref{3.2}.

Moreover, for all $z\in \cx$, since $h(\frac{d(z,\,w)}{\rho(z)})\ne0$ implies that
$d(z,\,w)<\rho(z)$, by \eqref{3.2}, we have
\begin{equation}\label{3.3}
\wz T_\rho\lf(\frac1{T_\rho(1)}\r)(z)
=\int_\cx h\lf(\frac{d(z,\,w)}{\rho(z)}\r)\frac1{T_\rho(1)(w)}\,d\mu(w)
\sim\wz T_\rho(1)(z)\frac1{V_{\rho(z)}(1)}\sim1.
\end{equation}

For all $x,\,y\in\cx$, define
\begin{equation}\label{3.4}
K_\rho(x,\,y)\equiv\dfrac1{T_\rho(1)(x)}
\dint_\cx h\lf(\dfrac{d(x,\,z)}{\rho(z)}\r)
\dfrac1{\wz T_\rho\lf(\frac1{T_\rho(1)}\r)(z)}
h\lf(\dfrac{d(z,\,y)}{\rho(z)}\r)\,d\mu(z)\dfrac1{T_\rho(1)(y)}.
\end{equation}
Then $K_\rho$ satisfies (i) through (v) of Proposition \ref{p3.1}.

It is easy to see that $K_\rho(x,\,y)=K_\rho(y,\,x)$ for all $x,\,y\in\cx$, which
yields (ii).

To see (v), by \eqref{3.1}, we have
\begin{eqnarray*}
\int_\cx K_\rho(x,\,y)\,d\mu(x)
&&=\dint_\cx\lf\{\dint_\cx\dfrac1{T_\rho(1)(x)}
h\lf(\frac{d(x,\,z)}{\rho(z)}\r)\,d\mu(x)\r\}\\
&&\quad\quad \times
\dfrac1{\wz T_\rho\lf(\frac1{T_\rho(1)}\r)(z)}
h\lf(\dfrac{d(z,\,y)}{\rho(z)}\r)\,d\mu(z)
\dfrac1{T_\rho(1)(y)}\\
&&=\dint_\cx h\lf(\dfrac{d(z,\,y)}{\rho(z)}\r)\,d\mu(z)
\dfrac1{T_\rho(1)(y)}=1.
\end{eqnarray*}

To prove (i), notice that
$h(d(x,\,z)/\rho(z))h(d(z,\,y)/\rho(z))\ne0$ implies that
$d(x,\,z)\le \rho(z)$ and $d(z,\,y)\le \rho(z)$,
which together with Lemma \ref{l2.1} (i) yields that
$\rho(x)\sim \rho(z)\sim \rho(y)$ and $d(x,\,y)\ls [\rho(x)\wedge\rho(y)]$.
From these estimates, \eqref{3.3} and \eqref{3.4}, it follows that
$$K_\rho(x,\,y)\ls\frac1{V_{\rho(x)}(x)}
\int_{d(x,\,z)\ls \rho(x)}\,d\mu(z)\frac1{V_{\rho(y)}(y)}
\ls \frac1{V_{\rho(x)}(x)}\ls
\frac1{V_{\rho(x)}(x)+V_{\rho(y)}(y)}.$$
Moreover, by $\eqref{3.4}$, it is easy to see
that $K_\rho(x,\,y)\ne0$ if and only if
$$h(d(x,\,z)/\rho(x))h(d(z,\,y)/\rho(z))\ne0$$ for some $z$,
which implies that $d(x,\,y)\ls \rho(x)$.
Thus
$\supp K_\rho(x,\,\cdot)\subset B(x,\,C\rho(x))$, which
 establishes (i).

To obtain (iii), if $d(x,\,z)<\rho(z)$, $d(y,\,y')<[\rho(x)+d(x,\,y)]/2$
and $d(z,\,y)<\rho(z)$ or $d(z,\,y')<\rho(z)$, by Lemma \ref{l2.1} (i),
we then have
$d(y,\,y')\ls\rho(x)+\rho(z)\ls \rho(x)$,
$d(x,\,y)\ls\rho(x)$ and $\rho(x)\sim \rho(z)\sim \rho(y)\sim \rho(y')$.
By this and \eqref{3.3}, we have
\begin{eqnarray}\label{3.5}
&&\lf|\frac1{T_\rho(1)(y)}h\lf(\frac{d(z,\,y)}{\rho(z)}\r)-
\frac1{T_\rho(1)(y')}h\lf(\frac{d(z,\,y')}{\rho(z)}\r)\r| \\
&&\quad\le \frac1{T_\rho(1)(y)}\lf|h\lf(\frac{d(z,\,y)}{\rho(z)}\r)-
h\lf(\frac{d(z,\,y')}{\rho(z)}\r)\r|+
\frac{|T_\rho(1)(y)-T_\rho(1)(y')|}{T_\rho(1)(y)T_\rho(1)(y')}\nonumber\\
&&\quad\ls\frac1{V_{\rho(x)}(x)}\frac{d(y,\,y')}{\rho(x)}
+\frac1{[V_{\rho(x)}(x)]^2}\int_\cx\lf|h\lf(\frac{d(w,\,y)}{\rho(w)}\r)-
h\lf(\frac{d(w,\,y')}{\rho(w)}\r)\r|\,d\mu(w)\nonumber\\
&&\quad\ls\frac1{V_{\rho(x)}(x)}\frac{d(y,\,y')}{\rho(x)}
+\frac1{[V_{\rho(x)}(x)]^2}\int_{|w-y|<\rho(w)\ {\rm or}\ |w-y'|<\rho(w)}
\frac{d(y,\,y')}{\rho(w)}\,d\mu(w)\nonumber\\
&&\quad\ls\frac1{V_{\rho(x)}(x)}\frac{d(y,\,y')}{\rho(x)}
+\frac1{[V_{\rho(x)}(x)]^2}\frac{d(y,\,y')}{\rho(x)}
\int_{|w-y|\ls\rho(x)\ {\rm or}\ |w-y'|\ls \rho(x)}
d\mu(w)\nonumber\\
&&\quad\ls\frac1{V_{\rho(x)}(x)}\frac{d(y,\,y')}{\rho(x)}.\nonumber
\end{eqnarray}
This together with \eqref{3.3} implies that
for all $x,\,y,\,y'\in\cx$ with $d(y,\,y')<[\rho(x)+d(x,\,y)]/2$,
\begin{eqnarray*}
&&|K_\rho(x,\,y)-K_\rho(x,\,y')|\\
&&\quad\le\dfrac1{T_\rho(1)(x)}
\dint_\cx h\lf(\dfrac{d(x,\,z)}{\rho(z)}\r)
\dfrac1{\wz T_\rho\lf(\frac1{T_\rho(1)}\r)(z)}\\
&&\quad\quad\quad\times\lf|\dfrac1{T_\rho(1)(y)}
h\lf(\dfrac{d(z,\,y)}{\rho(z)}\r)-
\dfrac1{T_\rho(1)(y')}
h\lf(\dfrac{d(z,\,y')}{\rho(z)}\r)\r|\,d\mu(z)
\ls\frac1{V_{\rho(x)}(x)}\frac{d(y,\,y')}{\rho(x)},
\end{eqnarray*}
which shows (iii).

To prove (iv), for all $x,\,x', \, y,\,y'\in\cx$ with
$d(x,\,x')\le[\rho(y)+d(x,\,y)]/3$ and
$d(y,\,y')\le[\rho(x)+d(x,\,y)]/3$,
by \eqref{3.5}, we have
\begin{eqnarray*}
&&|[K_\rho(x,\,y)-K_\rho(x,\,y')]-[K_\rho(x',\,y)-K_\rho(x',\,y')]|\\
&&\quad\ls
\dint_\cx \lf|\dfrac1{T_\rho(1)(x)}h\lf(\dfrac{d(x,\,z)}{\rho(z)}\r)
-\dfrac1{T_\rho(1)(x')}h\lf(\dfrac{d(x',\,z)}{\rho(z)}\r)\r|\\
&&\quad\quad\quad\times\dfrac1{\wz T_\rho\lf(\frac1{T_\rho(1)}\r)(z)}
\lf|\dfrac1{T_\rho(1)(y)}
h\lf(\dfrac{d(z,\,y)}{\rho(z)}\r)-
\dfrac1{T_\rho(1)(y')}
h\lf(\dfrac{d(z,\,y')}{\rho(z)}\r)\r|\,d\mu(z)\\
&&\quad\ls\frac{d(x,\,x')}{\rho(y)}\frac{d(y,\,y')}{\rho(x)}
\frac1{V_{\rho(x)}(x)+V_{\rho(y)}(y)},
\end{eqnarray*}
which yields (iv) and hence, completes the proof of Proposition \ref{p3.1}.
\end{proof}

\begin{thm}\label{t3.1}
Let $\rho$ be an admissible function and
$K_\rho $ as in Proposition \ref{p3.1}.
If $f\in H^1_\rho (\cx)$, then $f-K_\rho (f)\in H^1(\cx)$,
where $$K_\rho(f)(x)=\int_\cx K_\rho(x,\,y)f(y)\,d\mu(y)$$
for all $x\in\cx$.
Moreover, there exists a positive constant $C$ such that
for all $f\in H^1_\rho (\cx)$,
$$\|f-K_\rho (f)\|_{H^1(\cx)}\le C\|f\|_{H^1_\rho (\cx)}.$$
\end{thm}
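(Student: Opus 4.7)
The plan is to show $\|G(f - K_\rho f)\|_{L^1(\cx)} \ls \|G_\rho(f)\|_{L^1(\cx)}$ and conclude via Theorem \ref{t2.1}(i), using that $f - K_\rho f \in L^1(\cx)$ by Lemma \ref{l3.1}(i) together with the size bound in Proposition \ref{p3.1}(i). The key structural fact is the symmetry $K_\rho(x,y) = K_\rho(y,x)$ of Proposition \ref{p3.1}(ii), which gives $\langle f - K_\rho f, \vz\rangle = \langle f, \vz - K_\rho \vz\rangle$ for every test function $\vz$. Thus, for each $x\in\cx$ and every $\vz\in\cg_0^\ez(\bz,\gz)$ with $\|\vz\|_{\cg(x,r,\bz,\gz)}\le 1$ for some $r>0$, I aim to decompose $\vz - K_\rho\vz = \sum_\az \lz_\az \wz\vz_\az$, where $\wz\vz_\az$ is a normalized test function in $\cg(x_\az, \rho(x_\az)/2, \bz, \gz)$. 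Since $\rho(x_\az)/2 < \rho(x_\az)$, each $\wz\vz_\az$ is admissible for the $G_\rho$-maximal function, so that $|\langle f, \vz - K_\rho \vz\rangle| \ls \sum_\az \lz_\az G_\rho(f)(x_\az)$.

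Two ingredients produce the decomposition. First, the cancellation identity
\[
(\vz - K_\rho\vz)(y) = \int_\cx K_\rho(y, z)[\vz(y) - \vz(z)]\,d\mu(z),
\]
which uses $\int_\cx K_\rho(y,z)\,d\mu(z) = 1$ from Proposition \ref{p3.1}(v) and the symmetry; this will be what lets us exploit the smoothness of $\vz$ when $r$ is large compared with $\rho(y)$. Second, the partition of unity $\{\psi_\az\}$ from Lemma \ref{l2.4} splits $\vz - K_\rho\vz = \sum_\az \psi_\az(\vz - K_\rho\vz)$ into pieces supported in $B_\az = B(x_\az, \rho(x_\az)/2)$, whose radius is strictly less than $\rho(x_\az)$.

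The core task is to verify that each piece $\psi_\az(\vz - K_\rho\vz)$ equals $\lz_\az \wz\vz_\az$ with $\wz\vz_\az$ of norm at most $1$ in $\cg(x_\az, \rho(x_\az)/2, \bz, \gz)$, by combining the size and smoothness bounds on $\vz$, Proposition \ref{p3.1}(i)--(iv) for $K_\rho$, Lemma \ref{l2.4} for $\psi_\az$, and Lemma \ref{l2.1}(i) (giving $\rho(y)\sim \rho(x_\az)$ on $B_\az$). The case analysis separates $r\le\rho(x_\az)$ from $r>\rho(x_\az)$: in the first case the gain comes from the smoothness of $\vz$ at scale $r$; in the second case the cancellation identity, combined with the regularity estimates on $K_\rho$, produces a decay factor $(\rho(x_\az)/r)^\eta$ for some $\eta > 0$. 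The outcome is
\[
\lz_\az \ls \frac{\mu(B_\az)}{V_r(x) + V(x,x_\az)}\lf(\frac{r\wedge\rho(x_\az)}{r + d(x,x_\az)}\r)^\eta.
\]

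Finally, integrating $G(f - K_\rho f)(x) \ls \sum_\az \lz_\az(x,r) G_\rho(f)(x_\az)$ over $x$ (after taking the supremum over $\vz$), the decay factor guarantees $\int_\cx \sum_\az \lz_\az(x, r)\,d\mu(x) \ls \mu(B_\az)$ uniformly in $\az$; combined with the bounded overlap of Lemma \ref{l2.3} and the quasi-constancy of $G_\rho(f)$ on $B_\az$ (via the admissibility of $\rho$), this yields $\|G(f-K_\rho f)\|_{L^1(\cx)}\ls\sum_\az G_\rho(f)(x_\az)\mu(B_\az) \ls \|G_\rho(f)\|_{L^1(\cx)}$. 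The main obstacle is the case analysis above: verifying the double-regularity of $\wz\vz_\az$ requires the mixed-regularity estimate Proposition \ref{p3.1}(iv) in an essential way, since otherwise one cannot combine smoothness in both variables of $K_\rho$ with the smoothness of $\vz$ without losing the decay factor needed when $r > \rho(x_\az)$.
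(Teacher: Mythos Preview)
Your approach via duality (using the symmetry of $K_\rho$ to move the operator onto the test function) and then localizing $\vz-K_\rho\vz$ with the partition $\{\psi_\az\}$ is genuinely different from the paper's proof, which instead uses the radial maximal function $S^+$ with a $1$-$\ati$ of bounded support (Theorem~\ref{t2.1}(i)) and splits according to whether $2^{-k}<\rho(x)$ or $2^{-k}\ge\rho(x)$; the paper never introduces $\{\psi_\az\}$ at this stage and uses only Proposition~\ref{p3.1}(i),(iii),(v), never (iv).

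However, your plan has a real gap in the small-$r$ regime. Take $r\ll\rho(x)$ and one of the (boundedly many) indices $\az$ with $x\in B_\az$. On $B_\az$ the function $\vz$ peaks at height $\sim 1/V_r(x)$ near $x$, while $|K_\rho\vz|\ls 1/\mu(B_\az)$ there; hence $\|\psi_\az(\vz-K_\rho\vz)\|_{\cg(x_\az,\rho(x_\az)/2,\bz,\gz)}\sim\mu(B_\az)/V_r(x)$, which is exactly what your displayed formula for $\lz_\az$ gives when $d(x,x_\az)\ls r$ --- but then $\sup_{r>0}\lz_\az(x,r)=\infty$ near $x_\az$, and $\int_\cx\sup_r\lz_\az(x,r)\,d\mu(x)$ diverges (each dyadic shell $d(x,x_\az)\sim 2^{-j}\rho(x_\az)$ contributes $\sim\mu(B_\az)$), so your final integration step cannot close. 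The ``gain from the smoothness of $\vz$'' you claim for $r\le\rho(x_\az)$ is not a gain: the cancellation identity produces a factor $(\rho(x_\az)/(r+d(x,y)))^\bz\ge 1$ in this regime, and there is no cancellation in $\psi_\az\vz$ alone. The repair is to split at $r$ versus $\rho(x)$ rather than $\rho(x_\az)$: when $r<\rho(x)$, use $|\langle f,\vz\rangle|\le G_\rho(f)(x)$ directly and decompose only $K_\rho\vz=\sum_\az\psi_\az K_\rho\vz$ (each piece does sit at scale $\rho(x_\az)$ with an integrable coefficient); when $r\ge\rho(x)$, Lemma~\ref{l2.1}(ii) forces $\rho(x_\az)\ls r+d(x,x_\az)$ for every $\az$, and your cancellation argument then goes through. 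One further imprecision: ``quasi-constancy of $G_\rho(f)$ on $B_\az$'' is false as stated; what you actually need (and what holds) is that $\wz\vz_\az$ is an admissible test function at every center $y\in B_\az$, so that $|\langle f,\wz\vz_\az\rangle|\ls\inf_{B_\az}G_\rho(f)$.
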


\begin{proof}
Let $\{S_k\}_{k\in\zz}$ be a $1$-$\ati$ with bounded support as
in Remark \ref{r2.2} (i) and  $f\in H^1_\rho (\cx)$. Then
\begin{eqnarray*}
\|f-K_\rho(f)\|_{H^1(\cx)}&&\le
\lf\|\sup_{\{k:\,2^{-k}<\rho(\cdot)\}}|S_k(f)(\cdot)|\r\|_{L^1(\cx)}
+\lf\|\sup_{\{k:\,2^{-k}<\rho(\cdot)\}}|S_k(K_\rho (f))(\cdot)|\r\|_{L^1(\cx)}\\
&&\quad+\lf\|\sup_{\{k:\,2^{-k}\ge\rho(\cdot)\}}
|S_k(f)(\cdot)-S_k(K_\rho (f))(\cdot)|\r\|_{L^1(\cx)}\\
&&\equiv I_1+I_2+I_3.
\end{eqnarray*}

By \eqref{2.9}, it is easy to see that
$$I_1\le\|S^+_{\rho}(f)\|_{L^1(\cx)}
\ls\|G_{\rho}(f)\|_{L^1(\cx)}\sim\|f\|_{H^1_\rho (\cx)}.$$
By Lemma \ref{l3.1}, $f\in L^1(\cx)$, and moreover, for all $x\in\cx$,
\begin{eqnarray*}
S_k(K_\rho (f))(x)
=\int_\cx\int_\cx S_k(x,\,z)K_\rho (z,\,y)f(y)\,d\mu(z)\,d\mu(y).
\end{eqnarray*}
For all $x,\,y\in\cx$, let
 $$\vz(x,\,y)\equiv\int_\cx S_k(x,\,z)K_\rho (z,\,y)\,d\mu(z).$$
To obtain that
$I_2\ls\|f\|_{H^1_\rho (\cx)}$, by the definition of
$H^1_\rho (\cx)$, it suffices to prove that
$\vz(x,\,\cdot)\in\cg(\ez,\ez)$ and
$\|\vz(x,\,\cdot)\|_{\cg^\ez_0(x,\,\ell,\,\ez,\,\ez)}\ls1$
for some $0<\ell<\rho(x)$ and $\ez\in(0,\,1)$ as in Definition \ref{d2.6}.
To this end, notice that by Proposition \ref{p3.1} (i),
$K_\rho (z,\,y)\ne0$ if and only if $d(y,\,z)\ls \rho(z)$.
Then if $2^{-k}<\rho(x)$, $d(x,\,z)\ls 2^{-k}$
and $d(z,\,y)<\rho(z)$,  by Lemma \ref{l2.1} (i), we have
\begin{equation}\label{3.6}\rho(z)\sim \rho(y)\sim \rho(x)\end{equation}
and $d(x,\,z)\ls \rho(x)$,
which further implies that $\supp \vz(x,\,\cdot)\subset B(x,\,C\rho(x))$.
Also, by Proposition \ref{p3.1} (i) and \eqref{3.6},
$|\vz(x,\,y)|\ls[V_{\rho(x)}(x)]^{-1}$. On the other hand,
if $2^{-k}<\rho(x)$, $d(x,\,z)\ls 2^{-k}$,
$d(x,\,y)\ls \rho(x)$ and $d(y',\,y)\le[\rho(x)+d(x,\,y)]/2$,
by Lemma \ref{l2.1} (i), we have $\rho(x)\sim \rho(z)$,
$d(x,\,z)\ls \rho(x)$ and
$d(y',\,y)\ls \rho(z)+d(z,\,y)$. Therefore,
$$|K_\rho (z,\,y)-K_\rho (z,\,y')|\ls
\frac1{V_{\rho(x)}(x)}\frac{d(y,\,y')}{\rho(x)}.$$
This implies that
$$|\vz(x,\,y)-\vz(x,\,y')|\le
\int_\cx |S_k(x,\,z)||K_\rho (z,\,y)-K_\rho (z,\,y')|\,d\mu(z)
\ls \frac1{V_{\rho(x)}(x)}\frac{d(y,\,y')}{\rho(x)}.$$
Thus, letting $\ell\equiv\rho(x)/2$, we then have $\vz(x,\,\cdot)\in\cg(\ez,\,\ez)$ and
$\|\vz\|_{\cg(x,\,\ell,\,\ez,\,\ez)}\ls1$, which is desired.

To estimate $I_3$, by Proposition \ref{p3.1} (v),
we write
\begin{eqnarray*}
&&S_k(f)(x)-S_k(K_\rho (f))(x)\\
&&\quad=\int_\cx\lf\{\int_\cx [S_k(x,\,z)-
 S_k(x,\,u)]K_\rho (u,\,z)\,d\mu(u)\r\} f(z)\,d\mu(z).
 \end{eqnarray*}
For all $x,\,z\in\cx$, let $$\psi(x,\,z)\equiv\int_\cx [S_k(x,\,z)-
 S_k(x,\,u)]K_\rho (u,\,z)\,d\mu(u).$$

Notice that by Proposition \ref{p3.1} (i), if $K_\rho (u,\,z)\ne0$,
then $d(u,\,z)< C'\rho(u)$, and
by Remark \ref{r2.2} (i), if $S_k(x,\,u)\ne0$, then
$d(x,\,u)< \wz C2^{-k}$.
We first claim that
there exists a positive constant $\wz C_0\ge 2\wz C$ such that if
$d(x,\,u)\ge \wz C_02^{-k}$ and $d(u,\,z)< C'\rho(u)$, then
$d(x,\,z)\ge \wz C2^{-k}$ and hence, $S_k(x,\,z)=0$.

In fact, by \eqref{2.2} and $\rho(x)\le 2^{-k}$, we have
\begin{eqnarray*}
d(x,\,z)&&\ge d(x,\,u)-d(u,\,z)\ge d(x,\,u)-C'\rho(u)\\
&&\ge d(x,\,u)-C'C_3[\rho(x)]^{1/(1+k_0)}[\rho(x)+d(x,\,u)]^{k_0/(1+k_0)}\\
&&\ge \lf\{1-C'C_3(\wz C_0)^{-1/(1+k_0)} (1+1/\wz C_0)^{k_0/(1+k_0)}\r\}d(x,\,u).
\end{eqnarray*}
Choosing $\wz C_0$ large enough such that $\wz C_0\ge 2\wz C$ and
$$\lf\{1-C'C_3(\wz C_0)^{-1/(1+k_0)} (1+1/\wz C_0)^{k_0/(1+k_0)}\r\}\ge 1/2,$$
we then have $d(x,\,z)\ge \wz C 2^{-k}$.

Thus $\psi(x,\,z)\ne0$ only when there exists an $u\in\cx$ such
that $d(x,\,u)< \wz C_02^{-k}$ and $d(u,\,z)< C'\rho(u)$.
Based on this observation, set
$$W_1\equiv\{u\in\cx:\
d(z,\,u)\le [2^{-k}+d(z,\,x)]/2,\,d(z,\,u)< C'\rho(u),\,
d(x,\,u)<\wz C_02^{-k}\}$$
and
$$W_2\equiv\{u\in\cx:\
d(z,\,u)>[2^{-k}+d(z,\,x)]/2,\,d(z,\,u)<C'\rho(u),\,
d(x,\,u)<\wz C_02^{-k}\}.$$
Then
\begin{eqnarray*}
|\psi(x,\,z)|\le\lf[\int_{W_1}+\int_{W_2}\r]
|S_k(x,\,z)-S_k(x,\,u)|K_\rho (u,\,z)\,d\mu(u)
\equiv I_4+I_5.
\end{eqnarray*}
If $u\in W_1$, then $\rho(u)\sim \rho(z)$ and
$$d(x,\,z)\le d(x,\,u)+d(z,\,u)<\wz C_02^{-k}+[2^{-k}+d(x,\,z)]/2
\le (\wz C_0+1/2)2^{-k}+d(x,\,z)/2,$$ which implies that
$d(x,\,z)\ls 2^{-k}$. Therefore,
noticing $2^{-k}\gs \rho(x)+ d(x,\,z)\sim \rho(z)+d(x,\,z)$ via Lemma \ref{l2.1} (iii),
by the regularity of $S_k$ and Proposition \ref{p3.1} (i), we have
$$I_4\ls2^k\rho(z)\frac{\mu(W_1)}
{[V_{2^{-k}}(x)+V_{2^{-k}}(z)]V_{\rho(z)}(z)}\ls\frac{\rho(z)}{d(x,\,z)+\rho(z)}\frac1
{V(x,\,z)+V_{\rho(z)}(z)}.$$
If $u\in W_2$, then $\rho(u)\sim \rho(z)$ and $d(x,\,z)\le 2d(z,\,u)\ls \rho(z)$,
which implies that $\rho(x)\sim \rho(z)$. Hence by Proposition \ref{p3.1} (i) and (v),
and Definition \ref{d2.3} (i), we obtain
$$I_5\ls\frac1
{V_{\rho(z)}(z)}\ls\frac{\rho(z)}{d(x,\,z)+\rho(z)}\frac1
{V(x,\,z)+V_{\rho(z)}(z)}.$$
Thus,
\begin{eqnarray*}
|\psi(x,\,z)|\ls\frac{\rho(z)}{d(x,\,z)+\rho(z)}\frac1
{V(x,\,z)+V_{\rho(z)}(z)},
\end{eqnarray*}
which implies that
\begin{eqnarray*}
&&|S_k(f)(x)-S_k(K_\rho (f))(x)|\ls\int_\cx
\frac{\rho(z)}{d(x,\,z)+\rho(z)}\frac1
{V(x,\,z)+V_{\rho(z)}(z)} |f(z)|\,d\mu(z).
 \end{eqnarray*}
Therefore, by Lemma \ref{l3.1} (i), we have
\begin{eqnarray*}
I_3
&&\le\int_\cx\int_\cx \frac{\rho(z)}{\rho(z)+d(x,\,z)}
\frac1{V_{\rho(z)}(z)+V(x,\,z)}|f(z)|\,d\mu(z)\,d\mu(x)
\ls \|f\|_{L^1(\cx)}\ls\|f\|_{H^1_\rho (\cx)},
 \end{eqnarray*}
which completes the proof of Theorem \ref{t3.1}.
\end{proof}

\begin{thm}\label{t3.2}
Let $\rho$ be an admissible function and $q\in(1,\,\fz]$.
Then

\noindent(i)
$H^1_\rho (\cx)
=H^{1,\,q}_{\rho}(\cx)$ with equivalent norms;

\noindent(ii) $\|\cdot\|_{H^1_\rho (\cx)}$
and $\|\cdot\|_{H^{1,\,q}_{\rho,\,\fin}(\cx)}$
are equivalent norms on $H^{1,\,q}_{\rho,\,\fin}(\cx)$.
\end{thm}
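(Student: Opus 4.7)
My plan is to reduce both parts of the theorem to the classical $H^1(\cx)$ theory via the splitting
$$ f = (f - K_\rho f) + K_\rho f, $$
where the ``global'' piece $f - K_\rho f$ lies in $H^1(\cx)$ with controlled norm by Theorem~\ref{t3.1}, while the ``local'' piece $K_\rho f$ is decomposed through the partition of unity $\{\psi_\az\}_\az$ from Lemma~\ref{l2.4} together with the bounded-overlap property of Lemma~\ref{l2.3} and the pointwise size bound on $K_\rho$ from Proposition~\ref{p3.1}(i).

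For (i), the containment $H^{1,\,q}_{\rho}(\cx)\subset H^1_{\rho}(\cx)$ with the corresponding norm inequality is already in Lemma~\ref{l3.1}, so it suffices to prove the reverse for any $f\in H^1_\rho(\cx)$. First, Theorem~\ref{t3.1}, Theorem~\ref{t2.1}(i) and Lemma~\ref{l3.1} together place $f - K_\rho f$ in $H^{1,\,q}(\cx)\subset H^{1,\,q}_{\rho}(\cx)$ with atomic norm $\ls\|f\|_{H^1_\rho(\cx)}$. For $K_\rho f$, I write $K_\rho f=\sum_\az\psi_\az K_\rho f$; each summand is supported in $B_\az=B(x_\az,\rho(x_\az)/2)$, whose radius lies in $[\rho(x_\az)/4,\rho(x_\az))$, so no vanishing-moment constraint applies and $\psi_\az K_\rho f$ is a constant multiple of a $(1,\fz)_\rho$-atom. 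By Proposition~\ref{p3.1}(i),
$$ |\psi_\az K_\rho f(x)|\ls\frac{1}{V_{\rho(x_\az)}(x_\az)}\int_{B(x_\az,\,C\rho(x_\az))}|f(y)|\,d\mu(y), $$
so one takes the atomic coefficient to be the integral on the right; the bounded overlap from Lemma~\ref{l2.3} then gives $\sum_\az\lambda_\az\ls\|f\|_{L^1(\cx)}\ls\|f\|_{H^1_\rho(\cx)}$ via Lemma~\ref{l3.1}(i), and an appeal to Lemma~\ref{l3.1}(ii) lets one pass to the desired $(1,\,q)_\rho$ index.

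For (ii), the easy inequality $\|f\|_{H^1_\rho(\cx)}\ls\|f\|_{H^{1,\,q}_{\rho,\,\fin}(\cx)}$ follows by applying Lemma~\ref{l3.1}(i) term-by-term to any finite decomposition of $f$. For the converse, observe that any $f\in H^{1,\,q}_{\rho,\,\fin}(\cx)$ has bounded support. By Proposition~\ref{p3.1}(i), $K_\rho f$ also has bounded support; hence in $K_\rho f=\sum_\az\psi_\az K_\rho f$ only finitely many $\az$ contribute, yielding a finite $(1,\,\fz)_\rho$-atomic decomposition controlled by $\|f\|_{L^1(\cx)}\ls\|f\|_{H^1_\rho(\cx)}$. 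The remainder $f - K_\rho f$ has bounded support, lies in $L^q(\cx)$ (since $K_\rho$ is bounded on each $L^q(\cx)$ by the Schur-type estimates on its kernel), and has vanishing integral because Proposition~\ref{p3.1}(ii) and (v) yield $\int_\cx K_\rho f\,d\mu=\int_\cx f\,d\mu$. Thus $f - K_\rho f$ is a constant multiple of a single $(1,\,q)$-atom and so belongs to $H^{1,\,q}_{\fin}(\cx)$; Theorem~\ref{t2.1}(ii) produces a finite $(1,\,q)$-atomic decomposition of it with $\ell^1$-sum $\ls\|f - K_\rho f\|_{H^1(\cx)}\ls\|f\|_{H^1_\rho(\cx)}$ by Theorem~\ref{t3.1}. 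Each $(1,\,q)$-atom so produced is in turn broken into a finite sum of $(1,\,q)_\rho$-atoms by the partition $\{\psi_\az\}_\az$, exactly as in the proof of Lemma~\ref{l3.1}(i); the sum is finite because the atom has bounded support, and the total cost is $O(1)$ per atom.

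I expect the main obstacle to be the subcase $q=\fz$ of (ii), where the definition of $H^{1,\,\fz}_{\rho,\,\fin}(\cx)$ demands continuity of each atom. Continuity will be preserved throughout: $\psi_\az$ is Lipschitz by Lemma~\ref{l2.4}(ii), $K_\rho f$ is continuous by the H\"older-type regularity of $K_\rho$ in Proposition~\ref{p3.1}(iii), the finite atomic decomposition furnished by Theorem~\ref{t2.1}(ii) for $q=\fz$ already uses continuous atoms, and multiplying by $\psi_\az$ preserves continuity. The other delicate point is to verify that the two atomic decompositions (of $f - K_\rho f$ and of $K_\rho f$) remain \emph{finite} while having their coefficients \emph{jointly} controlled by $\|f\|_{H^1_\rho(\cx)}$, which is precisely what the combination of Theorems~\ref{t3.1} and~\ref{t2.1}(ii), Lemma~\ref{l3.1} and the bounded overlap of Lemma~\ref{l2.3} delivers.
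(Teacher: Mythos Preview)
Your proposal is correct and follows essentially the same route as the paper: split $f=(f-K_\rho f)+K_\rho f$, handle the first piece via Theorem~\ref{t3.1} and Theorem~\ref{t2.1}, decompose the second piece through the partition $\{\psi_\az\}$ with coefficients controlled by $\|f\|_{L^1(\cx)}$ and the bounded overlap of Lemma~\ref{l2.3}, and then convert the resulting $(1,q)$-atoms to $(1,q)_\rho$-atoms as in Lemma~\ref{l3.1}(i). The only cosmetic difference is that for $K_\rho f$ you produce $(1,\fz)_\rho$-atoms from the pointwise kernel bound and then invoke Lemma~\ref{l3.1}(ii), whereas the paper normalizes directly in $L^q$; and in part~(ii) you spell out explicitly that $f-K_\rho f$ is a multiple of a single $(1,q)$-atom (hence in $H^{1,q}_\fin(\cx)$), which the paper records more tersely.
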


\begin{proof}
We first show (i). Let $f\in H^1_\rho (\cx)$. Then by Theorem \ref{t3.1},
$f-K_\rho(f)\in H^1(\cx)$. By the atomic decomposition of $H^1(\cx)$
in Theorem \ref{t2.1} (i), there exist
$\{\lz_j\}_{j\in\nn}\subset\cc$ and $(1,\,q)$-atoms $\{a_j\}_{j\in\nn}$
such that $f-K_\rho(f)=\sum_{j\in\nn}\lz_ja_j$ in $L^1(\cx)$, and
$$\sum_{j\in\nn}|\lz_j|\ls\|f-K_\rho(f)\|_{H^1(\cx)},$$
 which together
with Theorem \ref{t3.1} implies that
$\sum_{j\in\nn}|\lz_j|\ls\|f\|_{H^1_\rho (\cx)}.$

Now we decompose $K_\rho(f)$ as a summation of
$(1,\,q)_{\rho}$-atoms.
Let $\{\psi_\az\}_\az$ be as in Lemma \ref{l2.4} and
$\lz_\az\equiv[\mu(B_\az)]^{1-1/q}\|\psi_\az K_\rho (f)\|_{L^q(\cx)}.$
If $\lz_\az=0$, set $a_\az\equiv 0$; if $\lz_\az>0$, set
$a_\az\equiv(\lz_\az)^{-1}\psi_\az K_\rho (f).$
Obviously, $a_\az$ is a $(1,\,q)_{\rho}$-atom, and
\begin{equation}\label{3.7}
K_\rho (f)=\sum_{\az} \lz_\az a_\az.
\end{equation}
Recall that $\supp \psi_\az\subset B_\az\equiv B(x_\az, \rho(x_\az)/2)$.
Notice that by Lemma \ref{l2.1} (i), it is easy to
see that if $x\in B_\az$, then $V_{\rho(x)}(x)\sim\mu(B_\az)$.
This, together with $\supp K_\rho(x,\,\cdot)\subset B(x,\,C\rho(x))$,
Lemma \ref{l2.3} and Lemma \ref{l3.1} (i), yields that
\begin{eqnarray*}
\sum_{\az}\lz_\az&&\ls \sum_{\az}\mu(B_\az)
\|f\chi_{B(x_\az,\,C\rho(x_\az))}\|_{L^1(\cx)}
\sup_{x\in B_\az}\frac 1{V_{\rho(x)}(x)}
\ls\|f\|_{L^1(\cx)}\ls\|f\|_{H^1_\rho (\cx)}.
\end{eqnarray*}
On the other hand, assume that $\supp a_j\subset B(x_j,\,r_j)$. If $r_j<\rho(x_j)$,
then $a_j$ is a $(1,\,q)_{\rho}$-atom.
If $r_j\ge\rho(x_j)$, then by Lemma \ref{l2.3}, there exist finite many
$\az_j$ such that $(B_{\az_j}\cap B(x_j,\,r_j))\ne\emptyset,$ namely,
$a_j=\sum_{\az_j}\psi_{\az_j} a_j$ is a finite summation.
Let $$\lz_{j,\,\az_j}\equiv[\mu(B_{\az_j})]^{1-1/q}\|\psi_{\az_j} a_j\|_{L^q(\cx)}.$$
If $\lz_{j,\,{\az_j}}=0$, then set $a_{j,\,\az_j}\equiv0$;
if $\lz_{j,\,{\az_j}}>0$, then set
$a_{j,\,\az_j}\equiv(\lz_{j,\,\az_j})^{-1}\psi_{\az_j} a_j.$
Then $a_{j,\,{\az_j}}$ is a $(1,\,q)_{\rho}$-atom,
and by the H\"older inequality, Lemmas \ref{l2.1} and  \ref{l2.4}, we have
\begin{eqnarray*}
\sum_{{\az_j}}|\lz_{j,\,{\az_j}}|&&
\ls\lf\{\sum_{B_{\az_j}\cap B(x_j,\,r_j)\ne\emptyset}
\mu(B_{\az_j})\r\}^{1-1/q}
\lf\{\sum_{B_{\az_j}\cap B(x_j,\,r_j)\ne\emptyset}
\|\psi_{\az_j}a_j\|^q_{L^q(\cx)}\r\}^{1/q}\\
&&\ls[\mu(B(x_j,\,r_j))]^{1/q-1}\|a_j\|_{L^q(\cx)}\ls1.
\end{eqnarray*}
Thus,
\begin{equation}\label{3.8}
f=\sum_{r_j<\rho(x_j)}\lz_ja_j+\sum_{r_j\ge \rho(x_j)}\sum_{\az_j}\lz_j
\lz_{j,\,{\az_j}}a_{j,\,{\az_j}}+\sum_{\az}\lz_\az a_\az,
\end{equation}
where $\{a_j\}_{r_j<\rho(x_j)}$,
$\{a_{j,\,\az_j}\}_{r_j\ge \rho(x_j),\,\az_j}$
and $\{a_\az\}_{\az}$ are $(1,\,q)_{\rho}$-atoms and
$$\sum_{r_j<\rho(x_j)}|\lz_j|+
\sum_{r_j\ge \rho(x_j)}\sum_{\az_j}|\lz_j
\lz_{j,\,{\az_j}}|+
\sum_{\az}\lz_\az\ls\sum_{j}|\lz_j|+\sum_{\az}\lz_\az\ls
 \|f\|_{H^1_\rho (\cx)}.$$
That is, $f\in H^{1,\,q}_{\rho}(\cx)$ and
$\|f\|_{H^{1,\,q}_{\rho}(\cx)}\ls\|f\|_{H^1_\rho (\cx)}$,
which together with Lemma \ref{l3.1} implies (i).

To prove (ii), if $f\in H^{1,\,q}_{\rho,\,\fin}(\cx)$,
then $f\in L^q(\cx)$ with bounded support when $q<\fz$ and
$f\in\ccc_c(\cx)$ when $q=\fz$, and so is $K_\rho(f)$
by (i), (ii) and (iii) of Proposition \ref{p3.1}. By Proposition \ref{p3.1} (v),
$f-K_\rho(f)\in H^{1,\,q}_\fin(\cx)$.
From Theorem \ref{t2.1} (ii), it follows that there exist $N\in\nn$,
$\{\lz_j\}_{j=1}^N\subset\cc$,
and $(1,\,q)_{\rho}$-atoms $\{a_j\}_{j=1}^N$ when $q<\fz$
and continuous $(1,\,\fz)_{\rho}$-atoms $\{a_j\}_{j=1}^N$
when $q=\fz$ such that $f-K_\rho(f)=\sum_{j=1}^Na_j$ and
$\sum_{j=1}^N|\lz_j|\ls\|f-K_\rho(f)\|_{H^1(\cx)}$, which together with
Theorem \ref{t3.1} implies that
$\sum_{j=1}^N|\lz_j|\ls\|f\|_{H^1_\rho (\cx)}$.
Observe that by Lemma \ref{l2.3}, \eqref{3.7} in this case is a
finite summation of $(1,\,q)_{\rho}$-atoms when $q<\fz$
and continuous $(1,\,\fz)_{\rho}$-atoms when $q=\fz$. This together with
the above argument in the proof of (i) implies that \eqref{3.8}
in this case is also a finite summation
of $(1,\,q)_{\rho}$-atoms when $q<\fz$
and continuous $(1,\,\fz)_{\rho}$-atoms when $q=\fz$, and
$$\|f\|_{H^{1,\,q}_{\rho,\,\fin}(\cx)}\ls\sum_{r_j<\rho(x_j)}|\lz_j|+
\sum_{r_j\ge \rho(x_j)}\sum_{\az_j}|\lz_j
\lz_{j,\,{\az_j}}|+
\sum_{\az}\lz_\az\ls
 \|f\|_{H^1_\rho (\cx)}.$$
 On the other hand, obviously,
 $\|f\|_{H^1_\rho (\cx)}\ls \|f\|_{H^{1,\,q}_{\rho,\,\fin}(\cx)},$
which completes the proof of Theorem \ref{t3.2}.
\end{proof}

We point out that an interesting application of finite atomic
decomposition characterizations as in Theorem \ref{t3.2}
is to obtain a general criterion for the boundedness of
certain sublinear operators on Hardy spaces via atoms;
see \cite{msv,yz1,gly1} for similar results.

Let $B$ be a Banach space with the norm $\|\cdot\|_\cb$ and $\mathcal Y$ be a linear space. An
operator $T$ from $\mathcal Y$ to $B$ is called
$B$-sublinear if for all $f,\, g\in\mathcal Y$
and numbers $\lz,\,\nu\in\cc$, we have
$$\|T(\lz f+\nu g)\|_B\le |\lz|\|T(f)\|_B+|\nu|\|T(g)\|_B$$
and $\|T(f)-T(g)\|_B\le\|T(f-g)\|_B$; see \cite{yz1}.
Obviously, if $T$ is linear, then $T$ is $B$-sublinear.
Moreover, if $B\equiv L^r(\cx)$ with $r\ge1$,
$T$ is sublinear in the classical sense and $T(f)\ge 0$
for all $f\in\mathcal Y$, then $T$ is also
$B$-sublinear.
Using Theorem \ref{t3.2} (ii),
we immediately obtain the following result.

\begin{prop}\label{p3.2}
Let $\rho$ be an admissible function, $q\in(1,\,\fz)$, $B$ be a Banach space and
$T$ be a B-sublinear operator from $H^{1,\,q}_{\rho,\,\fin}(\cx)$ to $B$.
If \begin{equation}\label{3.9}
\sup\{\|T(a)\|_B:\ a\ is\ any\ (1,\,q)_{\rho}\mbox{-} atom\}<\fz
\end{equation}
for some $q\in(1,\,\fz)$, or
\begin{equation}\label{3.10}
    \sup\{\|T(a)\|_B:\ a\  is\  any\ continuous\
    (1,\,\fz)_{\rho}\mbox{-}atom \}<\fz,
\end{equation}
then $T$ uniquely extends to a bounded $B$-sublinear operator
from $H^1_\rho (\cx)$ to $B$.
\end{prop}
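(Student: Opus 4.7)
The plan is to use Theorem \ref{t3.2}(ii) to obtain uniform control of $T$ on $H^{1,\,q}_{\rho,\,\fin}(\cx)$ in the $H^1_\rho(\cx)$-norm, and then to extend $T$ by density with the help of Theorem \ref{t3.2}(i).

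First, I would bound $T$ on $H^{1,\,q}_{\rho,\,\fin}(\cx)$. Under \eqref{3.9}, every $f$ in this space admits a finite decomposition $f=\sum_{j=1}^N\lz_ja_j$ into $(1,\,q)_\rho$-atoms with $\sum_{j=1}^N|\lz_j|$ arbitrarily close to $\|f\|_{H^{1,\,q}_{\rho,\,\fin}(\cx)}$. The $B$-sublinearity of $T$ combined with the uniform bound in \eqref{3.9} immediately gives $\|T(f)\|_B\le C\sum_{j=1}^N|\lz_j|$, and passing to the infimum followed by Theorem \ref{t3.2}(ii) converts this into $\|T(f)\|_B\ls\|f\|_{H^1_\rho(\cx)}$. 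Under \eqref{3.10} one applies the same argument on the subspace $H^{1,\,\fz}_{\rho,\,\fin}(\cx)$, whose continuous $(1,\,\fz)_\rho$-atoms are in particular $(1,\,q)_\rho$-atoms and so lie in the domain of $T$, appealing this time to Theorem \ref{t3.2}(ii) at $q=\fz$.

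Next, I would extend $T$ to all of $H^1_\rho(\cx)$. By Theorem \ref{t3.2}(i) any $f\in H^1_\rho(\cx)$ admits an atomic decomposition $f=\sum_{j\in\nn}\lz_ja_j$ convergent in $L^1(\cx)$ (via Lemma \ref{l3.1}) with $\sum_{j\in\nn}|\lz_j|\ls\|f\|_{H^1_\rho(\cx)}$. The partial sums $f_N\equiv\sum_{j=1}^N\lz_ja_j$ belong to $H^{1,\,q}_{\rho,\,\fin}(\cx)$ and converge to $f$ in the $H^1_\rho(\cx)$-norm, since $\|f-f_N\|_{H^1_\rho(\cx)}$ is controlled by the tail $\sum_{j>N}|\lz_j|$ via Lemma \ref{l3.1}(i). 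The boundedness established in the previous step makes $\{T(f_N)\}_{N\in\nn}$ Cauchy in $B$, and its limit defines the extension $\wz T(f)$; the $B$-sublinearity of $T$ guarantees that the limit is independent of the chosen decomposition and that $\wz T$ remains $B$-sublinear and bounded, while uniqueness of the extension follows from density.

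The main obstacle will be hypothesis \eqref{3.10}: a generic truncation $f_N$ furnished by Theorem \ref{t3.2}(i) consists of $(1,\,q)_\rho$-atoms that need not be continuous, so \eqref{3.10} cannot be applied term by term. To bridge this I would either refine the atomic decomposition so that each atom is continuous, by approximating in a Lusin-type fashion together with a correction to preserve the cancellation condition, and then extend from the dense subspace $H^{1,\,\fz}_{\rho,\,\fin}(\cx)$; or else directly invoke Theorem \ref{t3.2}(ii) at $q=\fz$ on this continuous-atom subspace and propagate the bound to the ambient space via its density in $H^1_\rho(\cx)$.
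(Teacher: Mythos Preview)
Your proposal is correct and takes essentially the same approach as the paper: bound $T$ on the finite atomic space via Theorem~\ref{t3.2}(ii), then extend by density. Your last-paragraph concern under \eqref{3.10} is resolved exactly by your own second option---the paper simply works in $H^{1,\,\fz}_{\rho,\,\fin}(\cx)$ and invokes its density in $H^1_\rho(\cx)$ without further comment.
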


\begin{proof}
For any $f\in H^{1,\,q}_{\rho,\,\fin}(\cx)$,
by Theorem \ref{t3.2} (ii),
there exist an $N\in\nn$, $\{\lz_j\}_{j=1}^N\subset\cc$, and
$(1, \,q)_{\rho}$-atoms $\{a_j\}_{j=1}^N$ when $q<\fz$
and continuous $(1,\,\fz)_{\rho}$-atoms when $q=\fz$
such that
$f=\sum_{j=1}^N\lz_ja_j$ pointwise and
$\sum_{j=1}^N|\lz_j|\ls\|f\|_{H^1_\rho (\cx)}$. Then
by the assumption \eqref{3.9}, we have that
$\|T(f)\|_B\ls\sum_{j=1}^N
|\lz_j| \ls\|f\|_{H^1_\rho (\cx)}.$
Since $H^{1,\,q}_{\rho,\,\fin}(\cx)$
is dense in ${H^1_\rho (\cx)}$, a density argument
gives the desired conclusion, which completes the proof of Proposition \ref{p3.2}.
\end{proof}

\begin{rem} \label{r3.2}\rm
(i) It is obvious that if $T$ is a bounded $B$-sublinear
operator from $H^1(\cx)$ to $B$, then $T$ maps all
$(1,\,q)_{\rho}$-atoms when $q\in(1,\,\fz)$
and continuous $(1,\,\fz)_{\rho}$-atoms when $q=\fz$ into uniformly bounded
elements of $B$. Thus, in Proposition \ref{p3.2}, the assumption that the
uniform boundedness of $T$ on all
$(1,\,q)_{\rho}$-atoms when $q\in(1,\,\fz)$
and all continuous $(1,\,\fz)_{\rho}$-atoms when $q=\fz$
is actually necessary.

(ii) Even when $B\equiv H^1_\rho (\cx)$ or $B\equiv L^r(\cx)$ with $r\ge1$,
to apply Proposition \ref{p3.2},
it is not necessary to know the continuity of the considered
operator $T$ from any space of test functions to its dual space,
or the boundedness of $T$ in $L^2(\cx)$ or in $L^p(\cx)$
for certain $p\in (1,\fz)$, which may be convenient in applications.

(iii) Suppose that
\begin{equation}\label{3.11}
 \sup\{\|T(a)\|_B:\ a\ {\rm is\ any}\ (1,\,\fz)_{\rho}\mbox{-\rm atom}\}<\fz.
\end{equation}
Denote by $T_0$ the restriction of $T$ in $H^{1,\,\fz}_{\rho,\,\fin}(\cx)$.
Then $T_0$ satisfies \eqref{3.10}.
By Proposition \ref{p3.2}, $T_0$ has an extension, denoted by
$\wz T_0$, such that $\wz T_0$ is bounded from $H^1_\rho (\cx)$ to $L^1(\cx)$.
However, $\wz T_0$ may not coincide with $T$ on all $(1,\,\fz)_{\rho}$-atoms.
See \cite{B2,yz1,msv,gly1} for further details and examples.

(iv) As an replacement of (iii) of this remark, we point out that
if $B\equiv L^q(\cx)$ for some $q\in[1,\,\fz)$, $T$ is bounded from
$L^{p_1}(\cx)$ to $L^{q_1}(\cx)$
for some $p_1,\ q_1\in[1,\,\fz)$, and $T$ satisfies \eqref{3.11},
then $T$ and $\wz T_0$ coincide on all
$(1,\,\fz)_{\rho}$-atoms. In fact,
since $T$ is bounded from $L^{p_1}(\cx)$ to $L^{q_1}(\cx)$,
from Lemma \ref{l2.5} (i) with a $1$-$\ati$ with bounded support,
it is easy to deduce this conclusion.
Therefore, in this case, to obtain the boundedness of $T$
from $H^1_\rho (\cx)$ to $L^q(\cx)$, it is enough to prove \eqref{3.11}.
\end{rem}

As an application of Proposition \ref{p3.2},
we obtain the boundedness in $H^1_\rho (\cx)$
of certain localized singular integrals, which are closely related to $\rho$
and motivated by the Riesz transforms associated
to the Schr\"odinger operators with nonnegative potentials satisfying
the reverse H\"older inequality. In what follows,
$L_b^\fz(\cx)$ denotes the space of functions
$f\in L^\fz(\cx)$ with bounded support.

Let $T$ be a linear operator bounded on $L^q(\cx)$ for some $q\in(1,\,\fz)$.
In addition, suppose that $T$ has associated with a kernel $K$
satisfying that there exist constants $\ez\in(0,1]$ and $\wz C,\,C>0$ such that

(K1) $|K(x,\,y)|\le C\frac1{V(x,\,y)}$ for all $x,\,y\in\cx$ with $x\ne y$;

(K2) $|K(x,\,y)-K(x,\,y')|\le
C\frac1{V(x,\,y)}[\frac{d(y,\,y')}{d(x,\,y)}]^\ez$
for all $x,\,y,\,y'\in\cx$ with $x\ne y$ and $d(y,\,y')\le d(x,\,y)/2$,

(K3) for all $f\in L_b^\fz(\cx)$ and almost all $x\notin\supp f$,
\begin{equation}\label{3.12}
T(f)(x)=\int_\cx K(x,\,y)\eta\lf(\frac{d(x,\,y)}{\wz C\rho(x)}\r)f(y)\,d\mu(y),
\end{equation}
where $\eta$ is as in \eqref{2.8}.
Then we have the following result.
\begin{prop}\label{p3.3}
The operator $T$ as in \eqref{3.12} is bounded from
$H^1_\rho (\cx)$ to $L^1(\cx)$.
\end{prop}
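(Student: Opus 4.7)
The plan is to invoke Proposition \ref{p3.2} with $B\equiv L^1(\cx)$ and the same exponent $q\in(1,\fz)$ on which $T$ is assumed bounded; it then suffices to establish that $\|T(a)\|_{L^1(\cx)}\ls1$ uniformly for every $(1,q)_{\rho}$-atom $a$ supported in a ball $B_0\equiv B(x_0,r_0)$ with $r_0<\rho(x_0)$. A routine truncation argument (using the $L^q$-boundedness of $T$ and dominated convergence) extends the kernel representation \eqref{3.12} from $L_b^\fz(\cx)$ to such atoms. The verification splits naturally according to whether $a$ carries the vanishing-mean condition, that is, whether $r_0<\rho(x_0)/4$ or $\rho(x_0)/4\le r_0<\rho(x_0)$.

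In the non-cancellation case $\rho(x_0)/4\le r_0<\rho(x_0)$ one has $r_0\sim\rho(x_0)$, and I would first show that $T(a)$ is supported in a ball $B(x_0,C\rho(x_0))$: if $T(a)(x)\ne0$, then by \eqref{3.12} some $y\in B_0$ satisfies $d(x,y)<2\wz C\rho(x)$, whence $d(x,x_0)<2\wz C\rho(x)+r_0$; feeding this inequality into \eqref{2.2} and isolating $\rho(x)$ forces $\rho(x)\ls\rho(x_0)$ and therefore $d(x,x_0)\ls\rho(x_0)$. H\"older's inequality, the doubling property, the $L^q$-boundedness of $T$, and $\|a\|_{L^q(\cx)}\le[\mu(B_0)]^{1/q-1}$ then combine to give $\|T(a)\|_{L^1(\cx)}\ls[\mu(B_0)]^{1/q'}\|a\|_{L^q(\cx)}\ls1$.

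For the cancellation case $r_0<\rho(x_0)/4$, I would split $\cx=2B_0\cup(\cx\setminus 2B_0)$. On $2B_0$ the same H\"older/$L^q$-boundedness argument applies. On $\cx\setminus 2B_0$, writing $\eta_y\equiv\eta(d(x,y)/(\wz C\rho(x)))$ and using $\int_\cx a\,d\mu=0$, I decompose
$$K(x,y)\eta_y-K(x,x_0)\eta_{x_0}=[K(x,y)-K(x,x_0)]\eta_y+K(x,x_0)[\eta_y-\eta_{x_0}].$$
The first summand is controlled by the classical Calder\'on--Zygmund estimate following from (K2) and doubling, namely $\sup_{y\in B_0}\int_{\cx\setminus 2B_0}|K(x,y)-K(x,x_0)|\,d\mu(x)\ls1$. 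For the second, $|\eta_y-\eta_{x_0}|\ls r_0/\rho(x)$, and this quantity vanishes unless $d(x,y)<2\wz C\rho(x)$ or $d(x,x_0)<2\wz C\rho(x)$; iterating \eqref{2.2} exactly as above localizes the integration to $\{x:\rho(x)\sim\rho(x_0),\,d(x,x_0)\ls\rho(x_0)\}$. Coupling this with (K1) and decomposing dyadically yields
$$\frac{r_0}{\rho(x_0)}\int_{2r_0\le d(x,x_0)\ls\rho(x_0)}\frac{d\mu(x)}{V(x,x_0)}\ls\frac{r_0}{\rho(x_0)}\log\frac{\rho(x_0)}{r_0}\ls1,$$
since $t\log(1/t)$ is bounded on $(0,1/4]$.

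The chief technical obstacle is the support localization of the cut-off difference $\eta_y-\eta_{x_0}$: one must iterate the admissibility inequality \eqref{2.2} to absorb the $\rho(x)$ appearing on both sides of $d(x,x_0)<2\wz C\rho(x)+r_0$ and deduce $\rho(x)\sim\rho(x_0)$ on the relevant region. Once this localization is secured, the remaining estimates are routine manipulations using doubling, H\"older's inequality, and the $L^q$-boundedness of $T$; Proposition \ref{p3.2} then delivers the asserted boundedness of $T$ from $H^1_\rho(\cx)$ into $L^1(\cx)$.
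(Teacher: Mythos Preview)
Your proof is correct and follows essentially the same route as the paper's: reduce via Proposition~\ref{p3.2} to uniform $L^1$ bounds on atoms, localize the support of $T(a)$ to $B(x_0,C\rho(x_0))$ through the admissibility inequality, handle the near region by H\"older and the $L^q$-boundedness of $T$, and in the cancellation case split $K(x,y)\eta_y-K(x,x_0)\eta_{x_0}$ into the $(K2)$-controlled piece and the $\eta$-difference piece. The only cosmetic difference is that the paper absorbs the $\eta$-difference contribution into the Calder\'on--Zygmund decay via $\frac{r_0}{\rho(x_0)}\ls\big[\frac{r_0}{d(x,x_0)}\big]^\ez$ (since $d(x,x_0)\ls\rho(x_0)$ and $\ez\le1$), whereas you bound it directly by $\frac{r_0}{\rho(x_0)}\log\frac{\rho(x_0)}{r_0}\ls1$; both are equally valid.
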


\begin{proof} By Proposition \ref{p3.2}, to show Proposition \ref{p3.3},
it suffices to prove that for all
$(1,\,2)_{\rho}$-atoms $a$,
$\|T(a)\|_{L^1(\cx)}\ls1$.
To this end, assume that the atom $a$ is
supported in $B(x_0,\,r)$ with $r<\rho(x_0)$.
We first claim that $\supp T(a)\subset B(x_0,\,C\rho(x_0))$,
where $C\equiv1+2C_3(C_4)^{-1}(1+2\wz C )^{k_0}$.

In fact, if $x\notin B(x_0,\,C\rho(x_0))$
and $d(x,\,y)>2\wz C\rho(x)$ for all $y\in B(x_0,\,r)$,
then the support assumption of $\eta$ together with \eqref{3.12} implies that
$T(a)(x)=0$. If $x\notin B(x_0,\,C\rho(x_0))$ and
there exists some $y\in B(x_0,\,r)$ such that $d(x,\,y)\le2\wz C\rho(x)$,
then by  Lemma \ref{l2.1} (iii) and \eqref{2.2},
$$\rho(x)\le (C_4)^{-1}(1+2\wz C )^{k_0}\rho(y)\le 2(C_4)^{-1}C_3(1+2\wz C )^{k_0}\rho(x_0);$$
thus,
$$d(x_0,\,y)\ge d(x_0,\,x)-d(x,\,y)\ge C\rho(x_0)-2(C_4)^{-1}C_3(1+2\wz C )^{k_0}
\rho(x_0)\ge\rho(x_0),$$
which is a contradiction with $y\in B(x_0,\,r)$.
Thus, $T(a)(x)=0$ also in this case and this shows the claim.

If $r\ge \rho(x_0)/4$, from the H\"older inequality and
the $L^q(\cx)$-boundedness of $T$,  it then follows that
$$\|T(a)\|_{L^1(\cx)}=\|T(a)\|_{L^1(B(x_0,\,C\rho(x_0)))}
\ls[V_r(x_0)]^{1-1/q}\|a\|_{L^q(B(x_0,\,r))}\ls1.$$

If $r<\rho(x_0)/4$, then by the H\"older inequality and
the $L^q(\cx)$-boundedness of $T$, we have
$$\|T(a)\|_{L^1(B(x_0,\,Cr))}
\ls[V_r(x_0)]^{1-1/q}\|T(a)\|_{L^q(B(x_0,\,Cr))}\ls1.$$
For any $x\in (B(x_0,\,C\rho(x_0))\setminus B(x_0,\,Cr))$,
by $\int_\cx a(y)\,d\mu(y)=0$, (K1) and (K2) together
with Lemma \ref{l2.1} (i), we have
\begin{eqnarray*}
|T(a)(x)|&&=\int_\cx \lf|K(x,\,y)\eta\lf(\frac{d(x,\,y)}{\rho(x)}\r)
-K(x,\,x_0)\eta\lf(\frac{d(x,\,x_0)}{\rho(x)}\r)\r||a(y)|\,d\mu(y)\\
&&\le \int_\cx
|K(x,\,y)-K(x,\,x_0)|\eta\lf(\frac{d(x,\,y)}{\rho(x)}\r)|a(y)|\,d\mu(y)\\
&&\quad + \int_\cx
|K(x,\,x_0)| \lf|\eta\lf(\frac{d(x,\,y)}{\rho(x)}\r)-
\eta\lf(\frac{d(x,\,x_0)}{\rho(x)}\r)\r||a(y)|\,d\mu(y)\\
&&\ls \int_{B(x_0,\,r)} \frac1{V(x,\,x_0)}\lf[\frac{d(x_0,\,y)}{d(x_0,\,x)}\r]^\ez\,|a(y)|\,d\mu(y)\\
&&\quad+ \int_{B(x_0,\,r)}\frac1{V(x,\,x_0)}\frac{d(x_0,\,y)}{\rho(x)}\,|a(y)|\,d\mu(y)\\
&&\ls \frac1{V(x,\,x_0)}\lf[\frac{r}{d(x_0,\,x)}\r]^\ez+
\frac1{V(x,\,x_0)}\frac{r}{\rho(x_0)}
\ls  \frac1{V(x,\,x_0)}\lf[\frac{r}{d(x_0,\,x)}\r]^\ez.
\end{eqnarray*}
Thus, assuming that $2^{j_0}r\le \rho(x_0)<2^{j_0+1}r$ for certain $j_0\in\nn$,
we obtain
\begin{eqnarray*}
&&\int_{B(x_0,\,C\rho(x_0))\setminus B(x_0,\,Cr)}
|T(a)(x)|\,d\mu(x)\\
&&\ls \int_{B(x_0,\,C\rho(x_0))\setminus B(x_0,\,Cr)}
\frac1{V(x,\,x_0)}\lf[\frac{r}{d(x_0,\,x)}\r]^\ez\,d\mu(x)
\ls \sum_{j=0}^{j_0} 2^{-j\ez}\ls1,
\end{eqnarray*}
which completes the proof of Proposition \ref{p3.3}.
\end{proof}

\begin{rem}\label{r3.3}\rm
We should point out that Proposition \ref{p3.2} is
used in Section \ref{s5.4} to prove the boundedness on Hardy spaces of
Riesz transforms associated
to Schr\"odinger operators with potentials satisfying
the reverse H\"older inequality on connected and simply connected
nilpotent Lie groups. Moreover,
there exist many examples of such localized singular integrals as in \eqref{3.12}.
For example, if $x\in\rn$ and
$$T(f)(x)\equiv{\rm p.\,v.}\, \int_\rn \frac{x_j-y_j}
{|x-y|^{n+1}}
\eta\lf(\frac{x-y}{\rho(x)}\r)f(y)\,dy,$$
then $T$ is an operator as in \eqref{3.12}.
Let $\wz T$ be a linear operator bounded on $L^q(\cx)$
for some $q\in(1,\,\fz)$ and for all $x\in\cx$,
$$\wz T(f)(x)\equiv{\rm p.\,v.}\,\int_\cx K(x,\,y)f(y)\,d\mu(y)$$
with $K$ and $K^t$ satisfying (K1) and (K2),
where $K^t(x,\,y)=K(y,\,x)$ for all $x,\,y\in\cx$. Define  $ T$ by setting, for all $x\in\cx$,
$$T(f)(x)\equiv{\rm p.\,v.}\,\int_\cx K(x,\,y)\eta\lf(\frac{d(x,\,y)}{\rho(x)}\r)
f(y)\,d\mu(y).$$
Since the maximal operator $\wz T_\ast$, which is defined by setting, for all $x\in\cx$,
$$\wz T_\ast(f)(x)=\sup_{\ez>0}\lf|\int_{d(x,\,y)>\ez} K(x,\,y)f(y)\,d\mu(y)\r|,$$
is bounded on $L^q(\cx)$ for certain $q\in(1,\,\fz)$ (see, for example, \cite{s93}),
then $T$ is an operator as in \eqref{3.12}.
\end{rem}

\section{Radial maximal function characterizations of
$H_{\rho}^1(\cx)$}\label{s4}

In this section, we establish a radial maximal function characterization of
$H_{\rho}^1(\cx)$ as follows.

\begin{thm}\label{t4.1}
Let $\rho$ be admissible and let
$\ez_1\in(0,\,1]$,\,$\ez_2,\,\ez_3>0$,
$\ez\in(0,\,\ez_1\wedge\ez_2)$ and
$\{S_k\}_{k\in\zz}$ be an $(\ez_1,\,\ez_2,\,\ez_3)$-$\ati$.
Then $f\in H^1_\rho (\cx)$ if and only if
$f\in (\cg^\ez_0(\bz,\,\gz))'$ for some
$\bz,\,\gz\in(0,\,\ez)$ and $\|S^+_{\rho}(f)\|_{L^1(\cx)}<\fz$; moreover, there
exists a positive constant $C$ such that for all $f\in H^1_\rho (\cx)$,
\begin{equation}\label{4.1}
C^{-1}\|S^+_{\rho}(f)\|_{L^1(\cx)}\le
\|f\|_{H^1_\rho (\cx)}\le
C\|S^+_{\rho}(f)\|_{L^1(\cx)}.
\end{equation}
\end{thm}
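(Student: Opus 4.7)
The easy direction $\|S^+_{\rho}(f)\|_{L^1(\cx)} \ls \|f\|_{H^1_\rho (\cx)}$ is immediate from \eqref{2.9}. For the reverse inequality, the plan is to assume $\|S^+_{\rho}(f)\|_{L^1(\cx)} < \fz$ and, invoking Theorem \ref{t3.2} together with Lemma \ref{l3.1}, reduce matters to producing an explicit decomposition of $f$ into $(1,\,q)_{\rho}$-atoms whose coefficient sum is dominated by $\|S^+_{\rho}(f)\|_{L^1(\cx)}$.

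To that end I would apply the discrete Calder\'on reproducing formula from \cite{hmy2, gly1} to write
$$f = \sum_{k\in\zz}\sum_{\tau\in I_k}\mu(Q_\tau^k)\,\wz D_k(\cdot,\,y_\tau^k)\,D_k(f)(y_\tau^k)$$
in $(\cg_0^\ez(\bz,\,\gz))'$, where the difference operators $D_k$ are built from $\{S_k\}_{k\in\zz}$, $\{Q_\tau^k\}_{\tau\in I_k}$ are Christ's dyadic cubes of generation $k$ with centers $y_\tau^k$, and each bump $\wz D_k(\cdot,\,y_\tau^k)$ carries the smoothness and cancellation properties required (after normalization) to serve as an approximate atom. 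I then split the sum into a small-scale piece $f_1$, consisting of the summands with $2^{-k}<\rho(y_\tau^k)$, and a large-scale piece $f_2$, consisting of those with $2^{-k}\ge \rho(y_\tau^k)$.

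For $f_2$, each bump is supported at a scale comparable to or exceeding $\rho$ at its center; using Lemmas \ref{l2.3} and \ref{l2.4} and the fact that, by Definition \ref{d2.7}, a $(1,\,q)_{\rho}$-atom supported on a ball of radius at least $\rho(x)/4$ requires no vanishing moment, I would repackage $f_2$ as a convergent sum of such atoms with coefficient control $\ls \|f\|_{L^1(\cx)}$, the latter being $\ls \|S^+_{\rho}(f)\|_{L^1(\cx)}$ via the argument already used in Lemma \ref{l3.1} (i). For $f_1$, I would introduce the level sets $\boz_j\equiv\{x:\ S^+_{\rho}(f)(x)>2^j\}$, perform a Whitney-type decomposition of each $\boz_j$, and group the small-scale terms according to the maximal Christ cube in $\boz_j\setminus\boz_{j+1}$ containing $y_\tau^k$, subject to the extra proviso that the grouped cube's sidelength remain strictly less than $\rho$ at its representative point (any Christ cube straddling this threshold must be split accordingly). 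The intrinsic cancellation built into $D_k$ on each such group then yields a $(1,\,q)_{\rho}$-atom with vanishing mean whose $L^q$-norm is controlled via the pointwise bound $|D_k(f)(y_\tau^k)|\ls S^+_{\rho}(f)(y_\tau^k)$ valid throughout $f_1$.

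The main obstacle is the last step: reconciling the Whitney decomposition of the level sets of $S^+_{\rho}(f)$ with the scale restriction $r<\rho(x)$ in the definition of a $(1,\,q)_{\rho}$-atom, while simultaneously ensuring that only finitely many bumps $\wz D_k(\cdot,\,y_\tau^k)$ contribute to each resulting atom and that their total $L^q$-norm is acceptable. This is precisely the \emph{subtle split of Christ's dyadic cubes} flagged in the introduction, and it is the ingredient that lets the argument bypass the Goldberg-style local $h^1$-theory used in \cite{d98,dz99,dz02,d05}. Once the grouping is in place, summing the atomic sizes against a layer-cake decomposition of $\|S^+_{\rho}(f)\|_{L^1(\cx)}$ produces the desired bound $\|f\|_{H^{1,\,q}_{\rho}(\cx)}\ls \|S^+_{\rho}(f)\|_{L^1(\cx)}$, and invoking Theorem \ref{t3.2} (i) converts this into \eqref{4.1}.
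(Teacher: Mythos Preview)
Your strategy is genuinely different from the paper's, and the large-scale piece $f_2$ hides a real gap.

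The paper does \emph{not} build an atomic decomposition from the Calder\'on formula. Instead, for each fixed $x$ it chooses $\ell=\ell(x)$ with $2^{-\ell}<\rho(x)\le 2^{-\ell+1}$ and applies the \emph{inhomogeneous} reproducing formula of Lemma~\ref{l4.2} starting at that level to estimate $|\langle f,\vz\rangle|$ pointwise for test functions $\vz$ localized near $x$. The ``subtle split of Christ's dyadic cubes'' referred to in the introduction is the decomposition $Q_\tau^{k,\nu}=\wz Q_\tau^{k,\nu}\cup\overline Q_\tau^{k,\nu}$ with $\wz Q_\tau^{k,\nu}=\{y\in Q_\tau^{k,\nu}:2^{-k}<\rho(y)/2\}$: on $\wz Q_\tau^{k,\nu}$ one has $|D_k(f)(y)|\ls S^+_\rho(f)(y)$, while on $\overline Q_\tau^{k,\nu}$ one uses only the size of $D_k$ and controls the outcome by an integral of $|f|$. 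This yields the pointwise bound \eqref{4.4} for $G_\rho(f)(x)$, and integrating in $x$ (using $\|f\|_{L^1(\cx)}\le\|S^+_\rho(f)\|_{L^1(\cx)}$) gives \eqref{4.1}. Because the starting level $\ell$ already matches $\rho(x)$, no genuinely large-scale terms of the form you call $f_2$ ever appear.

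Your route, by contrast, fixes a single homogeneous reproducing formula over all $k\in\zz$ and then has to confront the full tail $f_2=\sum_{2^{-k}\ge\rho(y_\tau^k)}(\cdots)$. The claim that this can be ``repackaged as a convergent sum of $(1,q)_\rho$-atoms with coefficient control $\ls\|f\|_{L^1(\cx)}$'' is where the argument breaks. The coefficients $D_k(f)(y_\tau^k)$ for $2^{-k}\ge\rho(y_\tau^k)$ are not dominated by $S^+_\rho(f)$, and knowing only $f\in L^1(\cx)$ gives no useful bound on them individually or in aggregate; a priori $f_2$ need not even converge in any sense that allows you to localize it with the partition $\{\psi_\az\}$ and read off $L^q$-norms on the pieces. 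Lemmas~\ref{l2.3} and~\ref{l2.4} let you chop a \emph{given} $L^q$ function into atoms, but they do not manufacture the needed $L^q$ control on $f_2$ out of $\|f\|_{L^1(\cx)}$. This is exactly the difficulty the paper sidesteps by making the reproducing formula's coarsest scale track $\rho(x)$, so that the only ``large-scale'' contribution is the single averaging term $S_\ell$, which is then handled by the cube-splitting above rather than by any atomic regrouping.
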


For the sake of applications, we need the following
characterization of $H^1_\rho (\cx)$ via a
variant of the radial maximal function.

\begin{thm}\label{t4.2}
Let $\rho$ be an admissible function.
Assume that $\{T_t\}_{t>0}$ is a family of linear operators
bounded on $L^2(\cx)$ with
integrable kernels $\{T_t(x,\,y)\}_{t>0}$
satisfying that there exist a continuous
$(\ez_1,\,\ez_2,\,\ez_3)$-$\ati$
$\{\wz T_t\}_{t>0}$ for some $\ez_1\in(0,\,1]$ and $\ez_2,\,\ez_3>0$,
constants $C>0$,  $\dz_2\in(0,\,\ez_2]$ and $\dz_1,\,\dz_3>0$
such that for all $x,\,y\in\cx$,

\noindent(i) $|T_t(x,\,y)|\le C\frac1{V_t(x)+V(x,\,y)}[\frac
t{t+d(x,\,y)}]^{\dz_2}[\frac{\rho(x)}{t+\rho(x)}]^{\dz_3}$;

\noindent(ii)   $|T_t(x,\,y)-\wz T_t(x,\,y)|\le C[\frac{t}{t+\rho(x)}]^{\dz_1}
\frac1{V_t(x)+V(x,\,y)}[\frac t{t+d(x,\,y)}]^{\dz_2}.$

\noindent Then the following are equivalent:
(\rm{a}) $f\in H^1_\rho (\cx)$;  (\rm{b}) $f\in L^1(\cx)$
and $\|T^+(f)\|_{L^1(\cx)}<\fz$; (\rm{c}) $f\in L^1(\cx)$ and
 $\|T^+_{\rho}(f)\|_{L^1(\cx)}<\fz$. Moreover, for all $f\in L^1(\cx)$,
$$\|f\|_{H^1_\rho (\cx)}\sim\|T^+(f)\|_{L^1(\cx)}\sim
\|T^+_{\rho}(f)\|_{L^1(\cx)},$$
where
$T^+(f)(x)\equiv\sup_{t>0}|T_t(f)(x)|$ and
$T^+_{\rho}(f)(x)\equiv\sup_{0<t<\rho(x)}|T_t(f)(x)|$ for all $x\in\cx$.
\end{thm}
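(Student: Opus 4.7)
\emph{Plan.} The implication (b) $\Rightarrow$ (c) is immediate from $T^+_\rho(f)\le T^+(f)$. For the remaining two implications I would use Theorem \ref{t3.2} and Theorem \ref{t4.1} as the two ``bridges'' to the already-established theory of $H^1_\rho(\cx)$.

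For (a) $\Rightarrow$ (b), I would combine Lemma \ref{l3.1} (which gives $f\in L^1(\cx)$) with Proposition \ref{p3.2} to reduce the boundedness of $T^+$ from $H^1_\rho(\cx)$ to $L^1(\cx)$ to checking the uniform estimate $\|T^+(a)\|_{L^1(\cx)}\le C$ for $(1,2)_\rho$-atoms $a$ supported in $B(x_0,r)$ with $r<\rho(x_0)$. I would split $\cx$ into three regions. On $B(x_0,2r)$, the Calder\'on--Zygmund maximal estimate derived from (i) gives $L^2$-boundedness of $T^+$, and H\"older does the rest. On $B(x_0,2\rho(x_0))\setminus B(x_0,2r)$, I would write $T_t=\wz T_t+(T_t-\wz T_t)$; the $\wz T_t$-piece is handled by the regularity of the continuous $\ati$ $\{\wz T_t\}_{t>0}$ (combined with the moment condition when $r<\rho(x_0)/4$, or with size bounds otherwise), while the remainder is controlled pointwise by the factor $[t/(t+\rho(x))]^{\dz_1}$. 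Finally, on $\cx\setminus B(x_0,2\rho(x_0))$, I exploit the extra decay $[\rho(x)/(t+\rho(x))]^{\dz_3}$ in (i) together with Lemma \ref{l2.1} (iii), which yields $\rho(x)\ls[\rho(x_0)]^{1/(1+k_0)}[d(x,x_0)]^{k_0/(1+k_0)}$ and thus integrable polynomial decay.

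For (c) $\Rightarrow$ (a), I would apply Theorem \ref{t4.1} to a discrete $\ati$ $\{S_k\}_{k\in\zz}$ obtained by sampling $\{\wz T_t\}_{t>0}$ at dyadic times (as in Remark \ref{r2.2} (ii)), so the task becomes proving $\|S^+_\rho(f)\|_{L^1(\cx)}\le C\|T^+_\rho(f)\|_{L^1(\cx)}+C\|f\|_{L^1(\cx)}$. The pointwise decomposition
\begin{equation*}
S^+_\rho(f)(x)\le T^+_\rho(f)(x)+\sup_{0<t<\rho(x)}|(\wz T_t-T_t)(f)(x)|
\end{equation*}
together with hypothesis (ii) and Fubini reduces the matter to the single inequality
\begin{equation*}
I(y)\equiv\int_\cx\sup_{0<t<\rho(x)}\lf[\frac{t}{t+\rho(x)}\r]^{\dz_1}
\frac{1}{V_t(x)+V(x,y)}\lf[\frac{t}{t+d(x,y)}\r]^{\dz_2}\,d\mu(x)\le C
\end{equation*}
uniformly in $y\in\cx$. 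I would prove this by splitting in $x$ according to whether $d(x,y)\le \rho(x)$ (where Lemma \ref{l2.1} (i) gives $\rho(x)\sim\rho(y)$ and a radial integration against $[d(x,y)/\rho(y)]^{\dz_1}/V(x,y)$ converges) or $d(x,y)>\rho(x)$ (where the sup in $t$ is essentially attained near $t\sim\rho(x)$, leaving the tail $[\rho(x)/d(x,y)]^{\dz_2}/V(x,y)$, integrable by doubling).

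\textbf{Main obstacle.} The hardest step is the uniform estimate for $I(y)$: the supremum over $t\in(0,\rho(x))$ must be explicitly optimized in each region of $(x,y)$-space, balancing the smallness of $[t/(t+\rho(x))]^{\dz_1}$ for small $t$ against the blow-up of $V_t(x)^{-1}$, and then the resulting pointwise bound must survive integration against the doubling geometry. The atom estimate in the far region for (a) $\Rightarrow$ (b) is a close second in difficulty, since it is precisely there that the interaction between admissibility of $\rho$ and the $\dz_3$-decay in (i) is essential.
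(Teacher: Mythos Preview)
Your proposal is correct and follows essentially the same route as the paper. For (c)$\Rightarrow$(a) the two arguments coincide: your bound on $I(y)$ is exactly the content of the paper's Lemma~\ref{l4.4} (the paper localizes first via the partition $\{\psi_\alpha\}$ of Lemma~\ref{l2.4} and then splits into $x\in B_\alpha^{**}$ versus $x\notin B_\alpha^{**}$, which is your split $d(x,y)\le\rho(x)$ versus $d(x,y)>\rho(x)$ in disguise). For (a)$\Rightarrow$(b) there is only an organizational difference: the paper economizes by recycling Theorem~\ref{t4.1} and Lemma~\ref{l4.4} to get $\|T^+_\rho(a)\|_{L^1}\ls1$ in one line, and then only estimates $\sup_{t\ge\rho(x)}|T_t(a)(x)|$ directly via hypothesis (i); you instead carry out the full three-region atom estimate for all $t>0$ by hand, which is a little more work but perfectly valid.

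One small technical correction: in your near-region bound for $I(y)$, the optimization over $d(x,y)<t<\rho(x)$ uses reverse doubling in the form $V_t(x)\gs(t/d(x,y))^\kappa V(x,y)$, so the exponent that actually emerges is $\kappa\wedge\dz_1$ rather than $\dz_1$ (this is exactly what the paper records in the proof of Lemma~\ref{l4.4}). Since any positive exponent makes the radial integral $\int_{d(x,y)\ls\rho(y)}[d(x,y)/\rho(y)]^{\alpha}V(x,y)^{-1}\,d\mu(x)$ converge, your conclusion is unaffected.
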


\begin{rem}\label{r4.1}\rm
(i) If $\{T_t\}_{t>0}$ and $\{\wz T_t\}_{t>0}$
satisfy the assumptions of Theorem \ref{t4.2},
then it is easy to see that for all $f\in L^1_\loc(\cx)$ and $x\in\cx$,
$$T^+(f)(x)\ls \wz T^+(f)(x)+M(f)(x)\ls M(f)(x),$$
and thus $T^+$ is bounded on $L^p(\cx)$ for $p\in(1,\,\fz]$ and
bounded from $L^1(\cx)$ to weak-$L^1(\cx)$.
Moreover, for all $f\in L^1(\cx)$, observing that for almost all $x\in\cx$,
by (ii) of Theorem \ref{t4.2},
$$|f(x)|=\lim_{t<\rho(x),\,t\to0}|\wz T_t(f)(x)|
\le T^+_{\rho}(f)(x)+C\lim_{t\to0}
\lf[\frac{t}{\rho(x)}\r]^{\dz_3}M(f)(x)
\le T^+_{\rho}(f)(x),$$
we have that $\|f\|_{L^1(\cx)}\le\|T^+_{\rho}(f)\|_{L^1(\cx)}
\le\|T^+(f)\|_{L^1(\cx)}$.

(ii)  Let $\{\wz T_t\}_{t>0}$ be as in Theorem \ref{t4.2}. Then
$\{\wz T_t(x,\,y)\chi_{\{t\le C\rho(x)\}}(x)\}_{t>0}$ satisfies
(i) and (ii) of Theorem \ref{t4.2} with  $\dz_2=\ez_2$ and
any $\dz_1,\,\dz_3>0$. Moreover, let $\dz_3>0$ and
for all $t>0$ and $x,\,y\in\cx$, define
$$T_t(x,\,y)\equiv\wz T_t(x,\,y)
\frac{[\rho(x)]^{\dz_3}}{t^{\dz_3}+[\rho(x)]^{\dz_3}}.$$
Then it is easy to verify that $\{T_t\}_{t>0}$
satisfies (i) and (ii) of Theorem \ref{t4.2} with
$\dz_1\equiv\dz_3$ and $\dz_2\equiv\ez_2$.
\end{rem}

To prove Theorem \ref{t4.1},
we need a variant of the inhomogeneous discrete
Calder\'on reproducing formula established in \cite{hmy2}.
This variant was established in \cite{gly1}.
To state this variant, we first recall the dyadic cubes on spaces of
homogeneous type constructed by Christ \cite{ch}.

\begin{lem}\label{l4.1}
Let $\cx$ be a space of homogeneous type. Then there exists a
collection $\{Q_\az^k\subset\cx:\ k\in\zz,\,\az\in I_k\}$ of open
subsets of $\cx$, where $I_k$ is some index set, and the constants
$\dz\in(0,\,1)$ and $\wz C_1,\,\wz C_2>0$ such that

(i) $\mu(\cx\setminus\cup_\az Q_\az^k)=0$ for each fixed $k$ and
$(Q_\az^k\cap Q_\bz^k)=\emptyset$ if $\az\ne\bz$;

(ii) for any $\az,\,\bz,\,k,\,\ell$ with $\ell\ge k$, either
$Q_\bz^\ell\subset Q_\az^k$ or $(Q_\bz^\ell\cap Q_\az^k)=\emptyset$;

(iii) for each $(k,\,\az)$ and $\ell<k$, there exists a unique $\bz$
such that $Q_\az^k\subset Q_\bz^\ell$;

(iv) $\diam(Q_\az^k)\le \wz C_1\dz^k$, where $\diam(Q_\az^k)\equiv{\sup}\{d(x,\,y):\ x,\,y\in Q_\az^k\}$;

(v) each $Q_\az^k$ contains some ball $B(z_\az^k,\,\wz C_2\dz^k)$, where
$z_\az^k\in\cx$.
\end{lem}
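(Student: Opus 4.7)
The plan is to follow Christ's construction, which proceeds in three layers: pick maximal separated nets at each scale, organize them into a tree, and build cubes from that tree. First, fix $\dz\in(0,\,1)$ small enough (its smallness to be tied later to the doubling constant $C_1$). By Zorn's lemma, at each level $k\in\zz$ I would choose a maximal $\dz^k$-separated set $\{z_\az^k:\az\in I_k\}\subset\cx$, so that $d(z_\az^k,\,z_\bz^k)\ge\dz^k$ whenever $\az\ne\bz$, while maximality yields the covering $\cx=\bigcup_\az B(z_\az^k,\,\dz^k)$. These two facts immediately provide the candidate inner and outer radii needed for (iv) and (v), with $\wz C_2=\dz/2$ (after a geometric correction) and $\wz C_1$ forthcoming.

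Second, I would build the tree. For each $k$ and each $\az\in I_{k+1}$, the level-$k$ covering furnishes some $\bz\in I_k$ with $d(z_\az^{k+1},\,z_\bz^k)<\dz^k$; using the axiom of choice I would fix one such $\bz$ and call it the parent $p(\az)$, with iterates $p^{\ell-k}$ for $\ell>k$. Properties (ii) and (iii) of the lemma are then forced to hold with respect to this tree, so each $Q_\az^k$ should be exactly the union of the descendants of $\az$ at every finer scale. Concretely, I would set
$$
\wz Q_\az^k\equiv\lf\{x\in\cx:\,x\in B(z_\bz^\ell,\,\dz^\ell)\text{ for some }\ell\ge k,\,\bz\in I_\ell\text{ with }p^{\ell-k}(\bz)=\az\r\},
$$
then take interiors and discard a thin set of points equidistant from two or more centers to obtain the final open $Q_\az^k$. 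Nesting (ii) and (iii) hold by construction, (iv) follows from summing a geometric series in $\dz$ along descending branches (fixing $\wz C_1$), and (v) from the $\dz^k$-separation of the level-$k$ net.

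The main obstacle will be property (i): showing that $\mu(\cx\setminus\bigcup_\az Q_\az^k)=0$ at every level $k$. This reduces to verifying that the set of points with ambiguous assignment to a single ancestor chain is $\mu$-null, which requires the doubling property \eqref{2.1} to control how many net points can lie close to a given point, together with a sufficiently small choice of $\dz$ to make a cascade of error sums converge. This measure-zero boundary property is the technically delicate heart of Christ's argument; I would invoke the original proof in \cite{ch} for that final step rather than reproduce it, since it is for this role that Lemma \ref{l4.1} is being recalled here.
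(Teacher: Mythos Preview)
Your proposal is correct and in fact goes further than the paper: the paper does not prove Lemma \ref{l4.1} at all but simply recalls it from Christ \cite{ch}, whereas you have sketched the main layers of Christ's construction before likewise deferring to \cite{ch} for the measure-zero boundary estimate. Since the lemma is being quoted rather than proved here, your treatment is entirely appropriate.
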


In fact, we can think of $Q_\az^k$ as being a dyadic cube with
diameter rough $\dz^k$ centered at $z_\az^k$. In what follows, for
simplicity, we always assume that $\dz=1/2$; see \cite{hmy2}
for how to remove this restriction.

For any $j\in \nn$, $k\in\zz$ and $\tau\in I_k$, denote by
$Q_\tau^{k,\,\nu}$, \, $\nu=1,\,2,\,\cdots,\,N(k,\,\tau)$, the set of
all cubes $Q_{\tau'}^{k+j}\subset Q_\tau^k$.
We also denote by $z_\tau^{k,\,\nu}$ the center of $Q_\tau^{k,\,\nu}$ and
$y_\tau^{k,\,\nu}$ any point of $Q_\tau^{k,\,\nu}$.
For $\ell\in\zz$ and $j\in\nn$, set
\begin{equation}\label{4.2}
\cd(\ell,\,j)\equiv\lf\{y_\tau^{k,\,\nu}\in Q_\tau^{k,\,\nu}:\
k=\ell,\,\cdots,\,\,\fz,\ \tau\in I_k, \ \nu=1,\,\cdots,\,
N(k,\,\tau)\r\}.
\end{equation}
In what follows, for any set $E$ and locally integrable
function $f$, set
$$m_E(f)\equiv\frac1{\mu(E)}\int_E f(z)\,d\mu(z).$$
Let $j_0\in\nn$ such that
\begin{equation}\label{4.3}
2^{-j_0}\wz C_1<1/3.
\end{equation}

The following Calder\'on reproducing formula comes from
\cite{gly1}.

\begin{lem}\label{l4.2}
Let $\ez_1\in(0,\,1]$,\,$\ez_2,\,\ez_3>0$,
$\ez\in(0,\,\ez_1\wedge\ez_2)$ and $\{S_k\}_{k\in\zz}$ be an
$(\ez_1,\,\ez_2,\,\ez_3)-\ati$. Then there exists $j_1>j_0$ with $j_0$
as in \eqref{4.3}
such that for any $\ell\in\zz$ and $\cd(\ell+1,\,j_1)$ as in \eqref{4.2},
there exist operators $\{\wz D_k\}_{k=\ell}^\fz$ with kernels $\{\wz
D_k(x,\,y)\}_{k=\ell}^\fz$ such that for any
$f\in(\cg^\ez_0(\bz,\,\gz))'$ with $\bz,\,\gz\in(0,\,\ez)$,
\begin{eqnarray*}
f(x)&&= \sum_{\tau\in I_\ell}\sum_{\nu=1}^{N(\ell,\,\tau)}
\int_{Q_\tau^{\ell,\,\nu}}\wz D_\ell(x,\,y)d\mu(y)
\, m_{Q^{\ell,\,\nu}_\tau}(S_\ell(f))\\
&&\quad+\sum_{k=\ell+1}^\fz\sum_{\tau\in
I_k}\sum_{\nu=1}^{N(k,\,\tau)} \mu(Q_\tau^{k,\,\nu})\wz
D_{k}(x,\,y^{k,\,\nu}_\tau) D_k(f)(y^{k,\,\nu}_\tau),
\end{eqnarray*}
where $D_k\equiv S_k-S_{k-1}$ for any $k\ge\ell+1$ and
the series converge in $(\cg^\ez_0(\bz,\,\gz))'$. Moreover,
for any $\ez'\in[\ez,\,\ez_1\wedge\ez_2)$, there exists a positive constant $
C_{\ez'}$ depending on $\ez'$ but not on $\ell$, $j_1$ and
$\cd(\ell+1,\,j_1)$ such that $\wz D_k$ for $k\ge\ell$
satisfies (i) and (ii) of Definition \ref{d2.3} with $\ez_1$ and
$\ez_2$ replaced by $\ez'$ and the constant $C_6$ replaced by $
C_{\ez'} $, and $\int_\cx \wz D_k(z,\,y)\,d\mu(z)=\int_\cx \wz
D_k(x,\,z)\,d\mu(z)=1$ when $k=\ell$, and $=0$ when
$k\ge\ell+1$.
\end{lem}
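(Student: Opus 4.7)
The plan is to derive this discrete reproducing formula from its continuous counterpart by a Neumann-series perturbation argument. First, I would invoke the continuous inhomogeneous Calder\'on reproducing formula associated with $\{S_k\}$ (cf.~\cite{hmy2}): starting from the telescoping identity $f=S_\ell f+\sum_{k>\ell}D_k f$ in $(\cg^\ez_0(\bz,\,\gz))'$ and applying a Cotlar--Stein / functional-calculus construction to the almost-orthogonal family $\{S_\ell,\,D_k\}_{k>\ell}$, one obtains integral operators $\{D_k^\circ\}_{k\ge\ell}$ whose kernels satisfy the analogues of Definition \ref{d2.3}(i)--(ii) at scale $2^{-k}$ for any $\ez'\in[\ez,\,\ez_1\wedge\ez_2)$, together with $\int_\cx D_\ell^\circ(x,\,y)\,d\mu(y)=1$, $\int_\cx D_k^\circ(x,\,y)\,d\mu(y)=0$ for $k>\ell$, and a continuous reproducing identity $f=D_\ell^\circ(S_\ell f)+\sum_{k>\ell}D_k^\circ(D_k f)$ in $(\cg^\ez_0(\bz,\,\gz))'$.

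Next, I would discretize each of these integrals using the sub-cubes at level $k+j_1$ from Lemma \ref{l4.1}: replace $S_\ell f(y)$ on each $Q_\tau^{\ell,\,\nu}$ by its mean $m_{Q_\tau^{\ell,\,\nu}}(S_\ell f)$, and $D_k f(y)$ on each $Q_\tau^{k,\,\nu}$ (for $k>\ell$) by the sampled value $D_k f(y_\tau^{k,\,\nu})$. This rewrites $f=M_{j_1}(f)+R_{j_1}(f)$, where $M_{j_1}(f)$ has exactly the form of the right-hand side of the claimed formula with $D_k^\circ$ in place of $\wz D_k$, and $R_{j_1}(f)$ is an error operator built from the oscillations of $S_\ell f$ and $D_k f$ on cubes of diameter at most $\wz C_1 2^{-k-j_1}$. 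Combining the $\ez'$-regularity of $D_k^\circ(x,\,\cdot)$ on scale $2^{-k}$ with these small oscillations produces a per-level gain of $2^{-j_1\ez'}$; summing in $k$ by almost-orthogonality between $\{S_\ell,\,D_k\}_k$ and $\{D_k^\circ\}_k$ yields a Schur/Cotlar--Stein-type bound showing that the operator norm of $R_{j_1}$ on $\cg^\ez_0(\bz,\,\gz)$ (and dually on $(\cg^\ez_0(\bz,\,\gz))'$) can be made strictly less than $1$ by choosing $j_1$ large enough. Then $I-R_{j_1}$ is invertible by a Neumann series; defining $\wz D_k$ through the kernel of $D_k^\circ\circ(I-R_{j_1})^{-1}$ produces operators with the stated size, regularity and moment properties, and the claimed identity follows by applying $(I-R_{j_1})^{-1}$ to the equation $M_{j_1}(f)=f-R_{j_1}(f)$.

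The main obstacle will be the uniform kernel estimate on $R_{j_1}$. The geometric gain $2^{-j_1\ez'}$ has to beat both the doubling/reverse-doubling growth of $\mu$ when summing over all $k\ge\ell+1$ and the scaling of a generic test function in $\cg(x,\,r,\,\bz,\,\gz)$, since all bounds must survive pairing against the dual. This is where the ``double regularity'' condition (iv) of Definition \ref{d2.3} on $\{S_k\}$, the reverse doubling of $\cx$, and the quantitative Christ-cube geometry of Lemma \ref{l4.1}(iv)--(v) all combine: the first makes the almost-orthogonality sharp enough to sum in $k$, while the last links the dyadic index $k+j_1$ to the actual scale of the cubes used in the oscillation estimates. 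Once the Neumann series is under control, extracting the kernel estimates for $\wz D_k$ from those of $D_k^\circ$ through the normally convergent geometric series is routine bookkeeping.
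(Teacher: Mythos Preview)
The paper does not give its own proof of Lemma~\ref{l4.2}; it simply quotes the result from \cite{gly1} (see the sentence immediately preceding the lemma: ``The following Calder\'on reproducing formula comes from \cite{gly1}''). Your outline is essentially the approach carried out in \cite{gly1} and \cite{hmy2}: a continuous inhomogeneous reproducing formula, a discretization on Christ sub-cubes of level $k+j_1$, an error operator whose norm carries a factor $2^{-j_1\ez'}$, and inversion by a Neumann series. So there is nothing to compare against in this paper itself, and your plan matches the cited source.

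One small slip worth flagging: from $f=M_{j_1}(f)+R_{j_1}(f)$ you get $f=(I-R_{j_1})^{-1}M_{j_1}(f)$, so the new kernels are $\wz D_k=(I-R_{j_1})^{-1}\circ D_k^\circ$, not $D_k^\circ\circ(I-R_{j_1})^{-1}$ as you wrote; the inverse acts on the $x$-variable side of the kernel. This matters when you later check the two-sided moment conditions $\int_\cx \wz D_k(z,y)\,d\mu(z)$ and $\int_\cx \wz D_k(x,z)\,d\mu(z)$, since each integral is preserved for a different structural reason (one uses that $R_{j_1}$ annihilates constants in the appropriate sense, the other uses the cancellation built into $D_k^\circ$). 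In \cite{gly1,hmy2} this is handled, but it is the one place in your sketch where the bookkeeping is not quite routine.
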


The following estimate is a variant of Lemma 5.3 in \cite{hmy2},
which is also used in the proof of Theorem \ref{t4.1}.

\begin{lem}\label{l4.3}
Let $\ez>0$ and $r\in(n/(n+\ez),\,1]$.
Then there exists a positive constant $C$
such that for all $k,\,k'\in\zz$,
$a^{k,\,\nu}_\tau\in\cc$,
 $y^{k,\,\nu}_\tau\in Q^{k,\,\nu}_\tau$,
 $\wz Q^{k,\,\nu}_\tau\subset Q^{k,\,\nu}_\tau$ with
$\tau\in I_k$ and $\nu=1,\,\cdots,\,N(k,\,\tau)$, and $x\in\cx$,
\begin{eqnarray*}
&&\sum_{\tau\in I_k}\sum_{\nu=1}^{N(k,\,\tau)}\mu(\wz Q^{k,\,\nu}_\tau)
 \frac{|a^{k,\,\nu}_\tau|}{V_{2^{-(k'\wedge k)}}+V(x,\,y^{k,\,\nu}_\tau)}
\lf[\frac{2^{-(k'\wedge k)}}{2^{-(k'\wedge k)}+d(x,\,y^{k,\,\nu}_\tau)}\r]^{\ez}\\
&&\quad\le C2^{[(k'\wedge k)-k]n(1-1/r)}
\lf\{M\lf(\sum_{\tau\in I_k}\sum_{\nu=1}^{N(k,\,\tau)}
|a^{k,\,\nu}_\tau|^r\chi_{\wz Q^{k,\,\nu}_\tau}\r)(x)\r\}^{1/r}.
\end{eqnarray*}
\end{lem}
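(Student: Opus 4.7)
The plan is to decompose the sum over indices $(\tau,\,\nu)$ into annular pieces around $x$ at scale $2^{-(k'\wedge k)}$, apply the elementary subadditivity $\sum_\az b_\az\le(\sum_\az b_\az^r)^{1/r}$ valid for $0<r\le1$ and $b_\az\ge0$ to convert an $\ell^1$ sum into an $\ell^r$ sum, and then express the latter as an integral against $F\equiv\sum_{\tau,\,\nu}|a_\tau^{k,\,\nu}|^r\chi_{\wz Q_\tau^{k,\,\nu}}$ controlled by $M(F)(x)$. Set $k^\ast\equiv k'\wedge k$, so $k^\ast\le k$, and partition the index set into $A_0\equiv\{(\tau,\,\nu):\ d(x,\,y_\tau^{k,\,\nu})<2^{-k^\ast}\}$ and $A_j\equiv\{(\tau,\,\nu):\ 2^{j-1-k^\ast}\le d(x,\,y_\tau^{k,\,\nu})<2^{j-k^\ast}\}$ for $j\ge1$. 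On $A_j$, the kernel factor is dominated by $C2^{-j\ez}/V_{2^{j-k^\ast}}(x)$ via the doubling property, and since $\diam Q_\tau^{k,\,\nu}\le\wz C_1 2^{-k}\le\wz C_1 2^{j-k^\ast}$ and $y_\tau^{k,\,\nu}\in Q_\tau^{k,\,\nu}$, every $\wz Q_\tau^{k,\,\nu}$ with $(\tau,\,\nu)\in A_j$ lies inside an enlargement $B(x,\,C 2^{j-k^\ast})$.

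Applying the subadditivity with $b_\az=|a_\tau^{k,\,\nu}|\mu(\wz Q_\tau^{k,\,\nu})$ gives
\[
\sum_{A_j}|a_\tau^{k,\,\nu}|\mu(\wz Q_\tau^{k,\,\nu})\le\lf(\sum_{A_j}|a_\tau^{k,\,\nu}|^r\mu(\wz Q_\tau^{k,\,\nu})^r\r)^{1/r}.
\]
I would then write $\mu(\wz Q_\tau^{k,\,\nu})^r=\mu(\wz Q_\tau^{k,\,\nu})\cdot\mu(\wz Q_\tau^{k,\,\nu})^{r-1}$ and pull out $\sup_{A_j}\mu(\wz Q_\tau^{k,\,\nu})^{r-1}$ as an outer constant. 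Using $\mu(\wz Q_\tau^{k,\,\nu})\sim V_{2^{-k}}(z_\tau^{k,\,\nu})$ (from Lemma \ref{l4.1} and doubling), together with the geometric relation $V_{2^{-k}}(z_\tau^{k,\,\nu})\gs 2^{(k^\ast-k-j)n}V_{2^{j-k^\ast}}(x)$ obtained by applying doubling to the inclusion $B(x,\,2^{j-k^\ast})\subset B(z_\tau^{k,\,\nu},\,C 2^{j-k^\ast})$, one gets $\mu(\wz Q_\tau^{k,\,\nu})^{r-1}\ls 2^{(k-k^\ast+j)n(1-r)}V_{2^{j-k^\ast}}(x)^{r-1}$ (noting $r-1\le 0$, so the lower bound on $\mu$ gives an upper bound on $\mu^{r-1}$).

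The residual factor $\sum_{A_j}|a_\tau^{k,\,\nu}|^r\mu(\wz Q_\tau^{k,\,\nu})=\int_{\bigcup_{A_j}\wz Q_\tau^{k,\,\nu}}F\,d\mu$ is then bounded by $CV_{2^{j-k^\ast}}(x)M(F)(x)$, using the disjointness of the $\wz Q_\tau^{k,\,\nu}$ and their containment in $B(x,\,C 2^{j-k^\ast})$. Combining these bounds and multiplying by the kernel factor $2^{-j\ez}/V_{2^{j-k^\ast}}(x)$, each annular piece contributes at most $C\cdot 2^{(k-k^\ast)n(1/r-1)}2^{-j(\ez-n(1/r-1))}M(F)(x)^{1/r}$. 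Summing over $j\ge 0$ yields a convergent geometric series exactly when $\ez>n(1/r-1)$, i.e., $r>n/(n+\ez)$, with overall prefactor $2^{(k-k^\ast)n(1/r-1)}=2^{[(k'\wedge k)-k]n(1-1/r)}$, matching the claim.

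The principal obstacle I foresee is tracking the measure ratio $\mu(\wz Q_\tau^{k,\,\nu})/V_{2^{j-k^\ast}}(x)$ correctly in the RD-space setting, where $V_r(y)$ depends nontrivially on the base point $y$; the doubling property must be invoked carefully to relate $V_{2^{-k}}(z_\tau^{k,\,\nu})$ to $V_{2^{j-k^\ast}}(x)$ uniformly for $(\tau,\,\nu)\in A_j$, and the threshold $r>n/(n+\ez)$ emerges exactly as the borderline for the dyadic summation in $j$ to converge.
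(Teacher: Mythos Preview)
Your overall strategy---annular decomposition around $x$ at scale $2^{-(k'\wedge k)}$, the subadditivity $\sum b_\az\le(\sum b_\az^r)^{1/r}$ for $r\le1$, and control of the resulting $\ell^r$ sum by the maximal function---is exactly the standard argument underlying \cite[Lemma~5.3]{hmy2}, to which the paper's omitted proof refers. However, there is a genuine gap at the step where you write ``Using $\mu(\wz Q_\tau^{k,\,\nu})\sim V_{2^{-k}}(z_\tau^{k,\,\nu})$.'' This comparability holds for the full cube $Q_\tau^{k,\,\nu}$ (by Lemma~\ref{l4.1}(v) and doubling), but $\wz Q_\tau^{k,\,\nu}$ is an \emph{arbitrary} measurable subset of $Q_\tau^{k,\,\nu}$, with no lower bound on its measure. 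Since $r-1\le0$, bounding $\sup_{A_j}\mu(\wz Q_\tau^{k,\,\nu})^{r-1}$ from above would require a \emph{lower} bound on $\inf_{A_j}\mu(\wz Q_\tau^{k,\,\nu})$, which is unavailable. Concretely: with a single nonzero term and $\mu(\wz Q)=\dz\to0$, the left-hand side of the lemma is of order $\dz$ while $\{M(|a|^r\chi_{\wz Q})(x)\}^{1/r}$ is only of order $\dz^{1/r}$; for $r<1$ the ratio $(\dz)^{1-1/r}$ is unbounded, so no uniform constant can work with $\chi_{\wz Q}$ on the right.

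The modification that does go through is to use $\mu(\wz Q_\tau^{k,\,\nu})\le\mu(Q_\tau^{k,\,\nu})$ \emph{before} applying the $\ell^r$ inequality. Then the factor you pull out is $\mu(Q_\tau^{k,\,\nu})^{r-1}$, for which the lower bound $\mu(Q_\tau^{k,\,\nu})\gs V_{2^{-k}}(z_\tau^{k,\,\nu})$ is legitimate, and the residual integral $\sum_{A_j}|a_\tau^{k,\,\nu}|^r\mu(Q_\tau^{k,\,\nu})=\int F$ produces $\chi_{Q_\tau^{k,\,\nu}}$ inside the maximal function. One obtains the inequality with $\chi_{Q_\tau^{k,\,\nu}}$ in place of $\chi_{\wz Q_\tau^{k,\,\nu}}$ on the right-hand side, and this weaker form is precisely what the applications in the proof of Theorem~\ref{t4.1} actually need (both $I_{2,\,1}$ and $I_{1,\,1}$ immediately dominate the $\chi_{\wz Q}$ contribution by a function supported on $Q$ or on all of $\cx$).
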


We point out that if
$\wz Q^{k,\,\nu}_\tau=Q^{k,\,\nu}_\tau$,
then this is just Lemma 5.3 of \cite{hmy2}.
The proof of Lemma \ref{l4.3} is a slight modification of the proof of
\cite[Lemma 5.3]{hmy2}. We omit the details.

We point out that the following approach used in the proof of Theorem \ref{t4.1}
is totally different from that used by Dziuba\'nski and
Zienkiewicz in their papers \cite{d98,dz99,dz02,d05}
to obtain a similar result on $\rn$.
The method in \cite{d98,dz99,dz02,d05} strongly depends on
an existing theory of localized Hardy spaces $h^1$
in the sense of Goldberg \cite{g}. Our method
successfully avoids this via the
discrete Calder\'on reproducing formula,
Lemma \ref{l4.2}.

\begin{proof}[Proof of Theorem \ref{t4.1}]
By \eqref{2.9}, to prove Theorem \ref{t4.1},
we only need to prove the second inequality in \eqref{4.1}.
To this end, let  $f\in(\cg_0^\ez(\bz,\,\gz))'$
with $\ez,\,\bz,\,\gz$ as in Definition \ref{d2.8}
such that
$\|S^+_{\rho}(f)\|_{L^1(\cx)}<\fz.$
Then by the proof of Lemma \ref{l3.1},
$f\in L^1(\cx)$ in the sense of $(\cg_0^\ez(\bz,\,\gz))'$
and $\|f\|_{L^1(\cx)}\le\|S^+_{\rho}(f)\|_{L^1(\cx)}$.
For any $x\in\cx$, there exists an $\ell\in\zz$ such that
$2^{-\ell}<\rho(x)\le2^{-\ell+1}$.
We first claim that for any $\vz\in\cg^\ez_0(\bz,\,\gz)$ satisfying that
$\int_\cx \vz(x)\,d\mu(x)=0$ and $\|\vz\|_{\cg^\ez_0(x,\,2^{-k'},\,\bz,\,\gz)}\le1$
for some $k'\ge\ell+1$, we have
\begin{eqnarray}\label{4.4}
|\langle f,\,\vz\rangle|&&\ls
\lf\{M\lf(\lf[S^+_{\rho}(f)\r]^r\r)(x)\r\}^{1/r}\\
&&\quad+\lf\{M\lf(\sum_{\tau\in I_\ell}
\sum_{\nu=1}^{N(\ell,\,\tau)}\lf[m_{Q^{\ell,\,\nu}_\tau}(|S^+_{\rho}(f)|)\r]^r
\chi_{Q^{\ell,\,\nu}_\tau}\r)(x)\r\}^{1/r}\nonumber\\
&&\quad+ \int_\cx \frac1{V_{\rho(z)}(z)+V(x,\,z)}
\lf(\frac{\rho(z)}{\rho(z)+d(x,\,z)}\r)^{\gz/(1+k_0)}|f(z)|\,d\mu(z),\nonumber
\end{eqnarray}
where $r\in(n/(n+\ez),\,1)$.

Assume that the above claim holds temporarily. Notice that for any function
$\phi\in\cg^\ez_0(\bz,\,\gz)$
with $\|\phi\|_{\cg^\ez_0(x,\,2^{-k'},\,\bz,\,\gz)}\le1$
for some $k'\ge\ell+1$, we have
$$\sz\equiv\lf|\int_\cx \phi(y)\,d\mu(y)\r|\le \int_\cx \frac1{V_{2^{-k'}}(x)+V(x,\,y)}
\lf(\frac{2^{-k'}}{2^{-k'}+d(x,\,y)}\r)^\gz\,d\mu(y)\ls1.$$
Set $\vz(y)\equiv\frac1{1+\sz C_6}[\phi(y)-\sz S_{k'}(x,\,y)]$ for all $y\in\cx$.
Obviously, $\int_\cx \vz(y)\,d\mu(y)=0$
and $\|\vz\|_{\cg^\ez_0(x,\,2^{-k'},\,\bz,\,\gz)}\le 1$.
Then from \eqref{4.4},
$$|\langle f,\,\phi\rangle|
\le \sz|S_{k'}(f)(x)|+(1+\sz C_6)|\langle f,\,\vz\rangle|$$
and
$$|S_{k'}(f)(x)|\le S^+_{\rho}(f)(x)\le
\lf\{M\lf(\lf[S^+_{\rho}(f)\r]^r\r)(x)\r\}^{1/r}$$
for almost all $x\in\cx$,
it follows that \eqref{4.4} still holds with
$|\langle f,\,\vz\rangle|$ replaced by $G_{\rho}(f)$ as in Definition \ref{d2.5}.
This together with the boundedness on $L^{1/r}(\cx)$
of the Hardy-Littlewood operator $M$
and $\|f\|_{L^1(\cx)}\le\|S^+_{\rho}(f)\|_{L^1(\cx)}$ implies that
\begin{eqnarray*}
\|G_{\rho}(f)\|_{L^1(\cx)}
&&\ls\|S^+_\rho(f)\|_{L^1(\cx)}+\lf\|\sum_{\tau\in I_\ell}
\sum_{\nu=1}^{N(\ell,\,\tau)}m_{Q^{\ell,\,\nu}_\tau}(S^+_{\rho}(f))
\chi_{Q^{\ell,\,\nu}_\tau}\r\|_{L^1(\cx)}\\
&&\quad+ \int_\cx \int_\cx \frac{|f(z)|}{V_{\rho(z)}(z)+V(x,\,z)}
\lf(\frac{\rho(z)}{\rho(z)+d(x,\,z)}\r)^{\gz/(1+k_0)}\,d\mu(z)\,d\mu(x)\\
&&\ls \|S^+_{\rho}(f)\|_{L^1(\cx)}+\|f\|_{L^1(\cx)}
\ls\|S^+_{\rho}(f)\|_{L^1(\cx)},
\end{eqnarray*}
which establishes the second inequality of \eqref{4.1} in Theorem \ref{t4.1}.

To prove the claim \eqref{4.4},
by Lemma \ref{l4.2} with the same notation as there, we write
\begin{eqnarray*}
\langle f,\,\vz\rangle&&=
\sum_{\tau\in I_\ell}\sum_{\nu=1}^{N(\ell,\,\tau)}
\int_{Q_\tau^{k,\,\nu}} \wz D^\ast_{\ell}(\vz)(y)\,d\mu(y)
m_{Q_\tau^{\ell,\,\nu}}(S_\ell(f))\\
&&\hs+\sum_{k=\ell+1}^\fz
\sum_{\tau\in I_k}\sum_{\nu=1}^{N(k,\,\tau)}
\mu(Q_\tau^{k,\,\nu})\wz D^\ast_{k}(\vz)(y^{k,\,\nu}_\tau)
D_k(f)(y^{k,\,\nu}_\tau)\equiv I_1+I_2,
\end{eqnarray*}
where $\wz D_k^\ast$ denotes the integral operator with kernel
$\wz D_k^\ast(x,\,y)\equiv\wz D_k(y,\,x)$ for all $x,\,y\in\cx$.

Observe that for all $k\ge k'$,
by using $\int_\cx D_k(x,\,y)\,d\mu(y)=0$,
(i) and (ii) of $S_k$ in Definition \ref{d2.3} and the size condition of $\vz$,
we  obtain that for all $y\in\cx$,
\begin{eqnarray}\label{4.5}
&&|\wz D^\ast_{k}(\vz)(y)|
\ls 2^{-(k-k')\bz}\frac1{V_{2^{-k'}}(x)+V(x,\,y)}
\lf(\frac{2^{-k'}}{2^{-k'}+d(x,\,y)}\r)^{\gz},
\end{eqnarray}
and that for all $k<k'$, by using
$\int_\cx \vz(y)\,d\mu(y)=0$, the size condition and the regularity of $\vz$,
(i) for $D_k$ in Definition \ref{d2.3}, we have that for all $y\in\cx$,
\begin{eqnarray}\label{4.6}
&&|\wz D^\ast_{k}(\vz)(y)|
\ls 2^{-(k'-k)\gz'}\frac1{V_{2^{-k}}(x)+V(x,\,y)}
\lf(\frac{2^{-k}}{2^{-k}+d(x,\,y)}\r)^{\gz},
\end{eqnarray}
where $\gz'\in(0,\,\gz)$; see the proof of Proposition 5.7
in \cite{hmy2} for some details.

Moreover, in what follows, set
$\wz Q^{k,\,\nu}_\tau\equiv\{y\in Q^{k,\,\nu}_\tau:\ 2^{-k}<\rho(y)/2\}$
and $\overline Q^{k,\,\nu}_\tau\equiv (Q^{k,\,\nu}_\tau\setminus \wz Q^{k,\,\nu}_\tau)$.
With the subtle split of $Q^{k,\,\nu}_\tau$ together with the arbitraries of
$y^{k,\,\nu}_\tau\in Q^{k,\,\nu}_\tau$, we have
$$\inf_{y\in Q^{k,\,\nu}_\tau}|D_k(f)(y)|\ls
\inf_{y\in\wz Q^{k,\,\nu}_\tau}|S^+_{\rho}(f)(y)|,$$
and
\begin{eqnarray*}
\inf_{y\in Q^{k,\,\nu}_\tau}|D_k(f)(y)|&&\le
\inf_{y\in\overline Q^{k,\,\nu}_\tau}|D_k(f)(y)|\\
&&\ls\inf_{y\in \overline Q^{k,\,\nu}_\tau}
\int_\cx\frac{|f(z)|}{V_{2^{-k}}(y)+V(y,\,z)}
\lf(\frac{2^{-k}}{2^{-k}+d(y,\,z)}\r)^\gz\,d\mu(z).
\end{eqnarray*}
This together with \eqref{4.5} and \eqref{4.6} implies that
\begin{eqnarray*}
|I_2|&&\ls\sum_{k=\ell+1}^\fz
\sum_{\tau\in I_k}\sum_{\nu=1}^{N(k,\,\tau)}
2^{-|k'-k|(\bz\wedge\gz')}\frac{\mu(\wz Q_\tau^{k,\,\nu})}
{V_{2^{-(k'\wedge k)}}(x)+V(x,\,y)}
\lf(\frac{2^{-(k'\wedge k)}}{2^{-(k'\wedge k)}+d(x,\,y)}\r)^{\gz}\\
&&\quad\times
\inf_{y\in\wz Q^{k,\,\nu}_\tau}|S^+_{\rho}(f)(y)|\\
&&\quad+\sum_{k=\ell+1}^\fz
\sum_{\tau\in I_k}\sum_{\nu=1}^{N(k,\,\tau)}
2^{-|k'-k|(\bz\wedge\gz')}\int_{\overline Q_\tau^{k,\,\nu}}
\frac1{V_{2^{-(k'\wedge k)}}(x)+V(x,\,y)}
\lf(\frac{2^{-(k'\wedge k)}}{2^{-(k'\wedge k)}+d(x,\,y)}\r)^{\gz}\\
&&\quad\times\lf\{
\int_\cx\frac{|f(z)|}{V_{2^{-k}}(y)+V(y,\,z)}
\lf(\frac{2^{-k}}{2^{-k}+d(y,\,z)}\r)^\gz\,d\mu(z)\r\}\,d\mu(y)
\equiv I_{2,\,1}+I_{2,\,2}.
\end{eqnarray*}
We first estimate $I_{2,\,2}$ by writing
\begin{eqnarray*}
I_{2,\,2}&&\ls \int_\cx|f(z)|\lf\{\sum_{k=\ell+1}^\fz2^{-|k'-k|(\bz\wedge\gz')}
\int_{\rho(y)\le 2^{-k+1}}
\frac1{V_{2^{-(k'\wedge k)}}(x)+V(x,\,y)}\r.\\
&&\quad\quad\times\lf(\frac{2^{-(k'\wedge k)}}{2^{-(k'\wedge k)}+d(x,\,y)}\r)^{\gz}
\frac1{V_{2^{-k}}(y)+V(y,\,z)}
\lf(\frac{2^{-k}}{2^{-k}+d(y,\,z)}\r)^\gz\,d\mu(y)\Bigg\}\,d\mu(z)\\
&&\equiv \int_\cx |f(z)|I_{2,\,2}(x,\,z)\,d\mu(z).
\end{eqnarray*}
If we can show that for all $x,\,z\in\cx$,
\begin{equation}\label{4.7}
I_{2,\,2}(x,\,z)\ls\frac1{V_{\rho(x)}(x)+V(x,\,z)}
\lf(\frac{\rho(x)}{\rho(x)+d(x,\,z)}\r)^\gz,
\end{equation}
then by Lemma \ref{2.1} (ii) and \eqref{2.2} together with
$V_{\rho(x)}+V(x,\,z)\sim V_{\rho(z)}(z)+V(x,\,z)$ for all $x,\,z\in\cx$,
we have
\begin{equation}\label{4.8}
I_{2,\,2}\ls\int_\cx \frac{|f(z)|}{V_{\rho(z)}(z)+V(x,\,z)}
\lf(\frac{\rho(z)}{\rho(z)+d(x,\,z)}\r)^{\gz/(1+k_0)}\,d\mu(z),
\end{equation}
which is a desired estimate.

To see \eqref{4.7}, notice that by Lemma \ref{l2.1} (i), if $d(x,\,y)<\rho(x)$,
then there exists a positive constant $\wz C$ such that
$(\wz C)^{-1}\rho(y)<\rho(x)<\wz C\rho(y)$.
Thus if $\wz C2^{-(k'\wedge k)+1}\le\rho(x)$
and $\rho(y)\le2^{-k+1}$, then we have $d(x,\,y)\ge\rho(x)$.
From this,  it follows that $(2^{-(k'\wedge k)}+d(x,\,y))\gs \rho(x)$.
Therefore, if $d(x,\,z)<2\rho(x)$, then we have
\begin{eqnarray*}
I_{2,\,2}(x,\,z)&&\ls
\frac1{V_{\rho(x)}(x)}\sum_{k=\ell+1}^\fz 2^{-|k'-k|(\bz\wedge\gz')}
\int_\cx\frac1{V_{2^{-k}}(y)+V(y,\,z)}\\
&&\quad\times
\lf(\frac{2^{-k}}{2^{-k}+d(y,\,z)}\r)^\gz\,d\mu(y)\ls\frac1{V_{\rho(x)}(x)}.
\end{eqnarray*}
If $d(x,\,z)\ge 2\rho(x)$ and $d(x,\,y)>d(x,\,z)/2$,
similarly, we have
\begin{eqnarray*}
I_{2,\,2}(x,\,z)&&\ls
\frac1{V_{\rho(x)}(x)}\lf(\frac{\rho(x)}{\rho(x)+d(x,\,z)}\r)^\gz
\sum_{k=\ell+1}^\fz 2^{-|k'-k|(\bz\wedge\gz')}
\int_\cx\frac1{V_{2^{-(k'\wedge k)}}(y)+V(y,\,z)}\\
&&\quad\times
\lf(\frac{2^{-k}}{2^{-k}+d(y,\,z)}\r)^\gz\,d\mu(y)
\ls\frac1{V_{\rho(x)}(x)}\lf(\frac{\rho(x)}{\rho(x)+d(x,\,z)}\r)^\gz.
\end{eqnarray*}
If $d(x,\,z)\ge 2\rho(x)$ and $d(y,\,z)>d(x,\,z)/2$,
then by $2^{-(k'\wedge k)}\le\rho(x)$,
\begin{eqnarray*}
I_{2,\,2}(x,\,z)&&\ls
\frac1{V_{\rho(x)}(x)}\lf(\frac{\rho(x)}{\rho(x)+d(x,\,z)}\r)^\gz
\sum_{k=\ell+1}^\fz 2^{-|k'-k|(\bz\wedge\gz')}
\int_\cx\frac1{V_{2^{-(k'\wedge k)}}(x)+V(x,\,y)}\\
&&\quad\times
\lf(\frac{2^{-(k'\wedge k)}}{2^{-(k'\wedge k)}+d(x,\,y)}\r)^\gz\,d\mu(y)
\ls\frac1{V_{\rho(x)}(x)}\lf(\frac{\rho(x)}{\rho(x)+d(x,\,z)}\r)^\gz.
\end{eqnarray*}
Combining these estimates implies \eqref{4.7}.

On the other hand, if we choose $r\in(n/(n+\ez),\,1)$ such that
$1/r>1-(\bz\wedge\gz')$, then by Lemma \ref{l4.3}, we have
\begin{eqnarray*}
I_{2,\,1}&&\ls\sum_{k=\ell+1}^\fz 2^{-|k'-k|(\bz\wedge\gz')}
2^{[(k'\wedge k)-k]n(1-1/r)}\\
&&\quad\times\lf[M\lf(\sum_{\tau\in I_k}\sum_{\nu=1}^{N(k,\,\tau)}
\lf|\inf_{y\in\wz Q^{k,\,\nu}_\tau}S^+_{\rho}(f)(y)\r|^r
\chi_{\wz Q^{k,\,\nu}_\tau}\r)(x)\r]^{1/r}\\
&&\ls \sum_{k=\ell+1}^\fz 2^{-|k'-k|(\bz\wedge\gz')}
2^{[(k'\wedge k)-k]n(1-1/r)}\lf[M\lf(\lf[S^+_{\rho}(f)\r]^r\r)(x)\r]^{1/r}\\
&&\ls\lf[M\lf(\lf[S^+_{\rho}(f)\r]^r\r)(x)\r]^{1/r}.
\end{eqnarray*}
Combining the estimates for $I_{2,\,1}$ and $I_{2,\,2}$ yields that
$$|I_2|\ls\lf[M\lf(\lf[S^+_{\rho}(f)\r]^r\r)(x)\r]^{1/r}
+\int_\cx\frac{|f(z)|}{V_{\rho(z)}(z)+V(x,\,z)}
\lf(\frac{\rho(z)}{\rho(z)+\rho(x,\,z)}\r)^{\gz/(1+k_0)}\,d\mu(z).$$

To estimate $I_1$, set
$$\wz m_{\wz Q^{k,\,\nu}_\tau}(g)
\equiv\frac1{\mu(Q^{k,\,\nu}_\tau)}\int_{\wz Q^{k,\,\nu}_\tau}g(z)\,d\mu(z).$$
Then by $k'\ge \ell$,
Lemma \ref{4.4} and \eqref{4.6}, we have
\begin{eqnarray*}
|I_1|&&\ls\sum_{\tau\in I_\ell}\sum_{\nu=1}^{N(\ell,\,\tau)}
\frac{\mu(\wz Q_\tau^{\ell,\,\nu})}{V_{2^{-\ell}}(x)+V(x,\,y)}
\lf(\frac{2^{-\ell}}{2^{-\ell }+d(x,\,y)}\r)^{\gz}
\wz m_{\wz Q^{\ell ,\,\nu}_\tau}(|S^+_{\rho}(f)(y)|)\\
&&\quad+\sum_{\tau\in I_\ell }\sum_{\nu=1}^{N(\ell ,\,\tau)}
\int_{\overline Q_\tau^{\ell ,\,\nu}}\frac1{V_{2^{- \ell }}(x)+V(x,\,y)}
\lf(\frac{2^{-\ell }}{2^{-\ell }+d(x,\,y)}\r)^{\gz}\\
&&\quad\times\lf\{\int_\cx\frac{|f(z)|}{V_{2^{-\ell }}(y)+V(y,\,z)}
\lf(\frac{2^{-\ell }}{2^{-\ell }+d(y,\,z)}\r)^\gz\,d\mu(z)\r\}\,d\mu(y)
\equiv I_{1,\,1}+I_{1,\,2}.
\end{eqnarray*}
By an argument similar to that used in \eqref{4.8}, we have
that \eqref{4.8} still holds by replacing $I_{2,\,2}$ with $I_{1,\,2}$.
For $I_{1,\,1}$, similarly to the estimate for $I_{2,\,1}$,
by Lemma \ref{l4.3}, we have
\begin{eqnarray*}
I_{1,\,1}&&\ls\lf[M\lf(\sum_{\tau\in I_\ell }\sum_{\nu=1}^{N(\ell ,\,\tau)}
\lf[\wz m_{\wz Q^{\ell ,\,\nu}_\tau}\lf(S^+_{\rho}(f)\r)\r]^r
\chi_{\wz Q^{\ell ,\,\nu}_\tau}\r)(x)\r]^{1/r},
\end{eqnarray*}
which together with the obvious inequality
$$\wz m_{\wz Q^{\ell ,\,\nu}_\tau}\lf(S^+_{\rho}(f)\r)
\chi_{\wz Q^{\ell ,\,\nu}_\tau}\le
m_{Q^{\ell ,\,\nu}_\tau}\lf(S^+_{\rho}(f)\r)
\chi_{ Q^{\ell ,\,\nu}_\tau}$$ further implies the
desired estimate. Thus, we have
\begin{eqnarray*}
|I_1|&&\ls\lf[M\lf(\sum_{\tau\in I_\ell}\sum_{\nu=1}^{N(\ell,\,\tau)}
\lf[\wz m_{\wz Q^{\ell,\,\nu}_\tau}\lf(S^+_{\rho}(f)\r)\r]^r
\chi_{\wz Q^{\ell,\,\nu}_\tau}\r)\r]^{1/r}\\
&&+\int_\cx \frac{|f(z)|}{V_{\rho(z)}(z)+V(x,\,z)}
\lf(\frac{\rho(z)}{\rho(z)+d(x,\,z)}\r)^{\gz/(1+k_0)}\,d\mu(z).
\end{eqnarray*}
Combining the estimates for $I_1$ and $I_2$ yields \eqref{4.4} and
hence, completes the proof of Theorem \ref{t4.1}.
\end{proof}

\begin{rem}\label{r4.2}\rm
Theorem \ref{t4.1} still holds with the $(\ez_1,\,\ez_2,\,\ez_3)$-$\ati$ replaced
by the continuous $(\ez_1,\,\ez_2,\,\ez_3)$-$\ati$.
In fact, if $\{\wz S_t\}_{t>0}$ is a continuous $(\ez_1,\,\ez_2,\,\ez_3)$-$\ati$,
letting $S_k\equiv \wz S_{2^{-k}}$ for $k\in\zz$, then
$\{S_k\}_{k\in\zz}$ is an $(\ez_1,\,\ez_2,\,\ez_3)$-$\ati$
and, by Theorem \ref{t4.1},
$$\|\wz S^+_{\rho}(f)\|_{L^1(\cx)}\ls
\|G_{\rho}(f)\|_{L^1(\cx)}\ls
\|S^+_{\rho}(f)\|\ls\|\wz S^+_{\rho}(f)\|_{L^1(\cx)},$$
where
$\wz S^+_{\rho}(f)(x)\equiv\sup_{0<t<\rho(x)}|\wz S_t(f)(x)|$
for all $x\in\cx$. The above claim is true.
\end{rem}

To prove Theorem \ref{t4.2}, we need the following estimate.
Let $T_t$ and $\wz T_t$ be as in Theorem \ref{t4.2}.
For $x,\,y\in\cx$, set $E_t(x,\,y)\equiv T_t(x,\,y)-\wz T_t(x,\,y)$
and $$E^+_{\rho}(f)(x)\equiv\sup_{0<t<\rho(x)}|E_t(f)(x)|.$$

\begin{lem}\label{l4.4}
Under the same assumptions as in Theorem \ref{t4.2}, then there exists a positive
constant $C$ such that for all $f\in L^1(\cx)$,
$\|E^+_{\rho}(f)\|_{L^1(\cx)}\le C\|f\|_{L^1(\cx)}.$
\end{lem}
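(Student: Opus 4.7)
The plan is to reduce the $L^1$ bound to a uniform kernel estimate via Fubini's theorem. Since $E_t(f)(x)=\int_\cx E_t(x,y)f(y)\,d\mu(y)$, we have the pointwise domination $E^+_\rho(f)(x)\le\int_\cx K(x,y)|f(y)|\,d\mu(y)$ with $K(x,y)\equiv\sup_{0<t<\rho(x)}|E_t(x,y)|$, so by Fubini's theorem it suffices to prove $\sup_{y\in\cx}\int_\cx K(x,y)\,d\mu(x)<\fz$. By condition (ii) of Theorem \ref{t4.2}, together with $t+\rho(x)\sim\rho(x)$ on the range $t<\rho(x)$, I obtain $K(x,y)\ls\sup_{0<t<\rho(x)}[t/\rho(x)]^{\dz_1}\,[V_t(x)+V(x,y)]^{-1}\,[t/(t+d(x,y))]^{\dz_2}$, and I would analyze this supremum by splitting $\cx$ according to whether $d(x,y)\ge\rho(x)$ or $d(x,y)<\rho(x)$.

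In the outer region $d(x,y)\ge\rho(x)$, every admissible $t$ satisfies $t<d(x,y)$, so $V_t(x)+V(x,y)\sim V(x,y)$ and $[t/(t+d(x,y))]^{\dz_2}\sim[t/d(x,y)]^{\dz_2}$. The integrand is then an increasing power of $t$, and evaluating at $t=\rho(x)$ yields $K(x,y)\ls[\rho(x)/d(x,y)]^{\dz_2}/V(x,y)$. Using \eqref{2.2} together with Lemma \ref{l2.1}(ii) to trade $\rho(x)$ for $\rho(y)$ (at the cost of exponent $\dz_2/(1+k_0)$), and then a dyadic annular decomposition $d(x,y)\sim 2^j\rho(y)$ for $j\ge 0$, the doubling property produces a geometric series $\sum_{j\ge 0}2^{-j\dz_2/(1+k_0)}<\fz$ bounding the contribution of this region to $\int_\cx K(x,y)\,d\mu(x)$, uniformly in $y$.

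In the inner region $d(x,y)<\rho(x)$, Lemma \ref{l2.1}(i) gives $\rho(x)\sim\rho(y)$. I would further split the sup in $t$ into $t\in(0,d(x,y)]$ (yielding the bound $[d(x,y)/\rho(x)]^{\dz_1}/V(x,y)$, attained as $t\uparrow d(x,y)$) and $t\in[d(x,y),\rho(x)]$, where the reverse-doubling inequality $V(x,y)/V_t(x)\ls[d(x,y)/t]^\kz$ from \eqref{2.1} converts $[t/\rho(x)]^{\dz_1}/V_t(x)$ into $t^{\dz_1-\kz}d(x,y)^\kz/[\rho(x)^{\dz_1}V(x,y)]$; this is monotone in $t$, so its endpoint values are both bounded by $[d(x,y)/\rho(x)]^{\dz_1\wedge\kz}/V(x,y)$. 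Combining, $K(x,y)\ls[d(x,y)/\rho(x)]^{\dz_1\wedge\kz}/V(x,y)$ in this region, and since $\dz_1\wedge\kz>0$, dyadic annular decomposition $d(x,y)\sim 2^{-j}\rho(y)$ for $j\ge 0$ yields $\sum_{j\ge 0}2^{-j(\dz_1\wedge\kz)}<\fz$.

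The main obstacle is the inner region: dropping the constraint $t<\rho(x)$ would produce the non-integrable kernel $\sim 1/V(x,y)$ characteristic of Calder\'on--Zygmund singular integrals, so $K(x,y)$ itself has no better scaling than $1/V(x,y)$ without further work. The decay factor $[t/\rho(x)]^{\dz_1}$ supplied by hypothesis (ii), combined with the reverse-doubling exponent $\kz$ built into the RD-space structure through \eqref{2.1}, is precisely what converts this into the integrable off-diagonal bound $[d(x,y)/\rho(x)]^{\dz_1\wedge\kz}/V(x,y)$; thus the boundedness is a quantitative reflection of how ``localization at scale $\rho$'' improves the kernel's behavior near the diagonal.
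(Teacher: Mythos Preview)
Your proof is correct and rests on the same kernel estimates as the paper's: in the inner region you obtain exactly the paper's bound $|E_t(x,y)|\ls[d(x,y)/\rho(x)]^{\kz\wedge\dz_1}/V(x,y)$ via reverse doubling, and in the outer region the decay $[\rho(y)/d(x,y)]^{\dz_2/(1+k_0)}/V(x,y)$ via \eqref{2.2}. The only organizational difference is that the paper first localizes $f$ through the partition of unity $\{\psi_\az\}$ of Lemma~\ref{l2.4} (reducing to \eqref{4.9} for each ball $B_\az^\ast$) and then splits the $x$-integration into $B_\az^{\ast\ast}$ and its complement, whereas you bypass that machinery by applying Fubini directly to the kernel $K(x,y)=\sup_{0<t<\rho(x)}|E_t(x,y)|$ and splitting on $d(x,y)\lessgtr\rho(x)$; your route is slightly more direct, but the substance is identical.
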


\begin{proof}
By Lemmas \ref{l2.3} and \ref{l2.4}, it suffices to prove that for all $\az$,
\begin{equation}\label{4.9}
 \|E^+_{\rho}(\chi_{B_\az^\ast} f)\|_{L^1(\cx)}
\ls\|\chi_{B_\az^\ast} f\|_{L^1(\cx)}.
\end{equation}
To this end, notice that for all $x,\,y\in\cx$ with $x\ne y$, by \eqref{2.1},
$$\lf[\frac {t+d(x,\,y)}{d(x,\,y)}\r]^\kz V(x,\,y)\ls
V_{t+d(x,\,y)}(x)\sim
 V_t(x)+V(x,\,y).$$
Thus for any $x\in B_\az^{\ast\ast}$ and $y\in B_\az^\ast$,
since $\rho(y)\sim\rho(x_\az)\sim\rho(x)$ via Lemma \ref{2.1} (i),
by the assumption (ii) of Theorem \ref{t4.2}, we have
\begin{eqnarray*}
|E_t(x,\,y)|&&\ls \frac1{V_t(x)+V(x,\,y)}
\lf[\frac t{t+d(x,\,y)}\r]^{\dz_2}\lf[\frac t{t+\rho(x)}\r]^{\dz_1}\\
&&\ls \frac1{V(x,\,y)}\lf[\frac {d(x,\,y)}{t+d(x,\,y)}\r]^\kz
\lf[\frac t{t+\rho(x_\az)}\r]^{\dz_1}
\ls \frac1{V(x,\,y)}\lf[\frac {d(x,\,y)}{\rho(x_\az)}\r]^{\kz\wedge \dz_1},
\end{eqnarray*}
which implies that
\begin{eqnarray*}
&&\int_{B_\az^{\ast\ast}}\sup_{0<t<\rho(x)}
|E_t(\chi_{B_\az^\ast} f)(x)|\,d\mu(x)\\
&&\quad\ls \int_{B_\az^{\ast\ast}}
\int_{B_\az^\ast}\frac1{V(x,\,y)}\lf[\frac {d(x,\,y)}{\rho(x_\az)}\r]^{\kz\wedge\dz_1}
|(\chi_{B_\az^\ast} f)(y)|\,d\mu(x)\,d\mu(y)
\ls \|\chi_{B_\az^\ast}f\|_{L^1(\cx)}.
\end{eqnarray*}
For any $x\notin B_\az^{\ast\ast}$ and  $t<\rho(x)$, it is easy to see that
$\rho(x_\az)\ls d(x,\,x_\az)\sim d(x,\,y)$ for all $y\in B_\az^\ast$,
and by \eqref{2.2},
$t<\rho(x)\ls[d(x,\,x_\az)]^{k_0/(1+k_0)}[\rho(x_\az)]^{1/(1+k_0)},$
from which it follows that
\begin{eqnarray*}
|E_t(f)(x)|&&\ls \int_{B_\az^\ast}\frac1{V_t(x)+V(x,\,y)}
\lf[\frac t{t+d(x,\,y)}\r]^{\dz_2}|f(y)|\,d\mu(y)\\
&&\ls \frac1{V(x,\,x_\az)}
\lf[\frac {\rho(x_\az)}{d(x,\,x_\az)}\r]^{\dz_2/(1+k_0)}
 \|\chi_{B_\az^\ast}f\|_{L^1(\cx)}.
\end{eqnarray*}
By this, we have
\begin{eqnarray*}
&&\int_{(B_\az^{\ast\ast})^\complement}\sup_{0<t<\rho(x)}
|E_t(\chi_{B_\az^\ast} f)(x)|\,d\mu(x)\\
&&\quad\ls \|\chi_{B_\az^\ast}f\|_{L^1(\cx)}\int_{(B_\az^{\ast\ast})^\complement}
\frac1{V(x,\,x_\az)}\lf[\frac {\rho(x_\az)}{d(x,\,x_\az)}\r]^{\dz_2/(1+k_0)}
\,d\mu(x) \ls\|\chi_{B_\az^\ast}f\|_{L^1(\cx)},
\end{eqnarray*}
which completes the proof of \eqref{4.9} and hence, the proof of Lemma \ref{l4.4}.
\end{proof}

Now we turn to the proof of Theorem \ref{t4.2}.

\begin{proof}[Proof of Theorem \ref{t4.2}]
Assume that $f\in L^1(\cx)$ and
$\|T^+_{\rho}(f)\|_{L^1(\cx)}<\fz$.
Then by Remark \ref{r4.1}, $\|f\|_{L^1(\cx)}\le\|T^+_{\rho}(f)\|_{L^1(\cx)}$.
From this, Remark \ref{r4.2} and Lemma \ref{l4.4}, it follows that
$f\in H^1_\rho (\cx)$ and
\begin{eqnarray*}
\|f\|_{H^1_\rho (\cx)}&&\ls\|\wz T^+_{\rho}(f)\|_{L^1(\cx)}
\le
\|T^+_{\rho}(f)\|_{L^1(\cx)}+\|E^+_{\rho}(f)\|_{L^1(\cx)}\\
&&\ls\|T^+_{\rho}(f)\|_{L^1(\cx)}+\|f\|_{L^1(\cx)}
\ls\|T^+_{\rho}(f)\|_{L^1(\cx)}\ls\|T^+(f)\|_{L^1(\cx)}.
\end{eqnarray*}

Conversely, we need to prove that
$T^+_{\rho}$ and $T^+$ are bounded from $H^1_\rho (\cx)$
to $L^1(\cx)$. To this end, by Proposition \ref{p3.2},
it suffices to prove that for all $(1,\,2)_{\rho}$-atoms $a$,
$$\|T^+_{\rho}(a)\|_{L^1(\cx)}+\|T^+(a)\|_{L^1(\cx)}\ls1.$$

Assume that $a$ is a $(1,\,2)_{\rho}$-atom
supported in $B(y_0,\,r)$ with $r<\rho(y_0)$.
By Theorem \ref{t4.1} and Remark \ref{r4.2},
we have $\|\wz T^+_{\rho}(a)\|_{L^1(\cx)}\ls1.$
By Lemma \ref{l4.4}, we further obtain
$\|E^+_{\rho}(a)\|_{L^1(\cx)}\ls\|a\|_{L^1(\cx)}\ls1$,
which yields $\|T^+_{\rho}(a)\|_{L^1(\cx)}\ls1$.
This also implies that, to show $\|T^+(a)\|_{L^1(\cx)}\ls1$,
it suffices to prove that $\|\sup_{t\ge\rho(\cdot)}|T_t(a)(\cdot)|\|_{L^1(\cx)}\ls1$.
To see this, by the H\"older inequality and the $L^2(\cx)$-boundedness of $T^+$
(see Remark \ref{r4.1} (i)), we have
$$\int_{B(y_0,\,2r)} \sup_{t\ge \rho(x)}|T_t(a)(x)|\,d\mu(x)\ls
\|a\|_{L^2(\cx)}[V_{2r}(y_0)]^{1/2}\ls1.$$
Since for any $x\in (B(y_0,\,4\rho(y_0))\setminus B(y_0,\,2r))$,
$\rho(x)\sim \rho(y_0)$ via Lemma \ref{l2.1} (i),
by assumption (i) of Theorem \ref{t4.2}, we have that for all
$x\in (B(y_0,\,4\rho(y_0))\setminus B(y_0,\,2r))$ and $t\ge\rho(x)$,
\begin{eqnarray*}
|T_t(a)(x)|&&\le\int_\cx |T_t(x,\,y)a(y)|\,d\mu(y)
\ls\frac1{V_t(x)}\ls \frac1{V_{\rho(y_0)}(y_0)}.
\end{eqnarray*}
This implies that
$$\int_{B(y_0,\,4\rho(y_0))\setminus B(y_0,\,2r)}
\sup_{t\ge\rho(x)}|T_t(a)(x)|\,d\mu(x)\ls1.$$
For any $x\notin B(y_0,\,4\rho(y_0))$, since \eqref{2.2} implies
$\rho(x)\ls [d(x,\,y_0)]^{k_0/(1+k_0)}[\rho(y_0)]^{1/(1+k_0)},$
by assumption (i) of Theorem \ref{t4.2}, we have that
\begin{eqnarray*}
|T_t(a)(x)|
&&\ls\frac1{V(x,\,y_0)}
\lf[\frac{\rho(x)}{d(x,\,y_0)}\r]^{\dz_2\wedge\dz_3}
\ls \frac1{V(x,\,y_0)} \lf[\frac{\rho(y_0)}{d(x,\,y_0)}\r]
^{(\dz_2\wedge\dz_3)/(1+k_0)},
\end{eqnarray*}
which implies that
\begin{eqnarray*}
&&\int_{B(y_0,\,4\rho(y_0))^\complement}
\sup_{t\ge\rho(x)}|T_t(a)(x)|\,d\mu(x)\\
&&\quad\ls
\int_{B(y_0,\,4\rho(y_0))^\complement}\frac1{V(x,\,y_0)} \lf[\frac{\rho(y_0)}{d(x,\,y_0)}\r]
^{(\dz_2\wedge\dz_3)/(1+k_0)}\,d\mu(x)\ls1.
\end{eqnarray*}
This shows that $\|\sup_{t\ge\rho(\cdot)}|T_t(a)(\cdot)|\|_{L^1(\cx)}\ls1$ and
hence, finishes the proof of Theorem \ref{t4.2}.
 \end{proof}

\section{Some applications}\label{s5}

In this section, we present several applications of results
in Sections 3 and 4.

\subsection{Schr\"odinger operators on $\rn$}\label{s5.1}

Let $n\ge 3$ and $\rn$ be the $n$-dimensional Euclidean space
endowed with the Euclidean
norm $|\cdot|$ and the Lebesgue measure $dx$.
Denote the Laplace operator $-\sum_{j=1}^n(\frac{\partial}{\partial x_j})^2$
on $\rn$ by $\Delta$ and the corresponding heat semigroup
$\{e^{-t\Delta}\}_{t>0}$ by $\{\wz T_t\}_{t>0}$. By the
Gaussian estimates for the heat kernel and the Markov property
for $\{\wz T_t\}_{t>0}$, we know that
$\{\wz T_{t^2}\}_{t>0}$ forms a continuous $(1,\,N,\,N)$-$\ati$
as in Remark \ref{r2.2} (ii) for any $N>0$.

Let $U$ be a nonnegative locally integrable function
on $\rn$, $\cl\equiv\Delta+U$ the Schr\"odinger operator
and $\{T_t\}_{t>0}\equiv\{e^{-t\cl}\}_{t>0}$  the corresponding
heat semigroup. Define
$$H^1_\cl(\rn)\equiv\{f\in L^1(\rn):\ \|f\|_{H^1_\cl(\rn)}
\equiv\|T^+(f)\|_{L^1(\rn)}<\fz\},$$
where $T^+(f)(x)\equiv\sup_{t>0}|T_t(f)(x)|$ for all $x\in\rn$.

If $q>n/2$ and $U\in \cb_q(\rn,\,|\cdot|,\,dx)$, where
$\cb_q(\rn,\,|\cdot|,\,dx)$ is the reverse H\"older
class as in Subsection \ref{s2.1},
then Dziuba\'nski and Zienkiewicz \cite{dz99} firstly established the
atomic decomposition characterizations of $H^1_\cl(\rn)$
via the auxiliary function $\rho$ defined as in \eqref{2.3}.
In fact, Dziuba\'nski and Zienkiewicz in \cite{dz99} proved that
$H^1_\cl(\rn)=H^{1,\,\fz}_{\rho}(\rn)$ with equivalent norms.
Moreover, in \cite{dz02}, for $f\in L^1(\rn)$ with compact support,
Dziuba\'nski and Zienkiewicz also proved that
$\|f\|_{H^1_\cl(\cx)}\sim \|\wz T^+_{\rho}(f)\|_{L^1(\cx)}$,
where $\wz T^+_{\rho}$ is defined as in Remark \ref{r4.2}
with $\wz S_t$ replaced by $\wz T_{t^2}$.

On the other hand, Proposition \ref{p2.1} implies that
$\rho$ defined in \eqref{2.3} is admissible,
$\{\wz T_{t^2}\}_{t>0}$ is a continuous
$(1,\,N,\,N)_{\rho}$-$\ati$ for any $N>0$,
$\{T_{t^2}\}_{t>0}$ and $\{\wz T_{t^2}\}_{t>0}$ satisfy
the assumptions (i) and (ii) of Theorem \ref{t4.2};
see \cite{dz02}. Thus, by Theorem \ref{t4.2},
$H^1_\cl(\rn)=H^1_\rho (\rn)$ with equivalent norms.
Moreover, all of the results obtained in Sections 3 and 4
are valid for $H^1_\cl(\rn)$. In particular, the results established
in Section \ref{s3} are new
compared to the results in \cite{dz99,dz02}.

\subsection{Degenerate Schr\"odinger operators on  $\rn$}\label{s5.2}

Let $n\ge 3$ and $\rn$ be the $n$-dimensional Euclidean space
endowed with the Euclidean norm $|\cdot|$ and the Lebesgue measure $dx$.
Recall that a nonnegative locally integrable function $w$ is said to
be an $\ca_2(\rn)$ weight in the sense of Muckenhoupt if
$$\sup_{B}\lf\{\frac1{|B|}\int_B w(x)\,dx\r\}^{1/2}
\lf\{\frac1{|B|}\int_B [w(x)]^{-1}\,dx\r\}^{1/2}<\fz,$$
where the supremum is taken over all the balls in $\rn$;
see \cite{m72} and also \cite{s93} for the definition of
$\ca_2(\rn)$ weights and their properties.
Observe that if we set $w(E)\equiv\int_Ew(x)dx$ for any measurable set $E$, then
there exist positive constants $C,\,Q$ and $\kz$
such that for all $x\in\rn$, $\lz>1$ and $r>0$,
$$C^{-1}\lz^\kz w(B(x,\,r))\le w(B(x,\,\lz r))\le C\lz^Qw(B(x,\,r)),$$
namely, the measure $w(x)\,dx$ satisfies \eqref{2.1}.
Thus $(\rn,\,|\cdot|,\,w(x)\,dx)$ is an RD-space.

Let $w\in\ca_2(\rn)$ and
$\{a_{i,\,j}\}_{1\le i,\,j\le n}$ be a real symmetric matrix function satisfying that
for all $x,\,\xi\in\rn$,
$$C^{-1}|\xi|^{2}\le\sum_{1\le i,\,j\le n}a_{i,\,j}(x)\xi_i\overline \xi_j
\le C|\xi|^2.$$ Then the degenerate elliptic operator
$\cl_0$ is defined by
$$\cl_0 f(x)\equiv-\frac1{w(x)}\sum_{1\le i,\,j\le n}
\partial_i(a_{i,\,j}(\cdot)\partial_j f)(x),$$
where $x\in\rn$.
Denote by $\{\wz T_t\}_{t>0}\equiv \{e^{-t\cl_0}\}_{t>0}$
the semigroup generated by $\cl_0$. We also denote the kernel of
$\wz T_t$ by $\wz T_t(x,\,y)$ for all $x,\,y\in\rn$ and $t>0$.
Then it is known that there exist positive constants $C,\,C_7,\,\wz C_7$
and $\az\in(0,\,1]$ such that
for all $t>0$ and $x,\,y\in\rn$,
\begin{equation}\label{5.1}
\frac1{V_{\sqrt t}(x)}\exp\lf\{-\frac{|x-y|^2}{\wz C_7t}\r\}\le
\wz T_t(x,\,y)\le
\frac1{V_{\sqrt t}(x)}\exp\lf\{-\frac{|x-y|^2}{C_7t}\r\};
\end{equation}
that for all $t>0$ and $x,\,y,\,y'\in\rn$ with $|y-y'|<|x-y|/4$,
\begin{equation}\label{5.2}
 |\wz T_t(x,\,y)-\wz T_t(x,\,y')|\le
\frac1{V_{\sqrt t}(x)}\lf(\frac{|y-y'|}{\sqrt t}\r)^\az\exp
\lf\{-\frac{|x-y|^2}{C_7t}\r\};
\end{equation}
and, moreover, that for all $t>0$ and $x,\,y\in\rn$,
\begin{equation}\label{5.3}
\int_\rn \wz T_t(x,\,z)\,w(z)\,dz=1=\int_\rn \wz T_t(z,\,y)\,w(z)\,dz;
\end{equation}
see, for example, Theorems 2.1, 2.7, 2.3 and 2.4, and Corollary 3.4 of \cite{hs01}.

Let $U$ be a nonnegative locally integrable function on $w(x)\,dx$.
Define the  degenerate Schr\"odinger operator by $\cl \equiv\cl_0+U.$
Then $\cl$ generates a semigroup $\{T_t\}_{t>0}\equiv\{e^{-t\cl}\}_{t>0}$ with
kernels $\{T_t(x,\,y)\}_{t>0}$ for
all $x,\,y\in\rn$. By Kato-Trotter's product formula (see \cite{k78}),
$0\le T_t(x,\,y)\le \wz T_t(x,\,y)$ for all $x,\,y\in\rn$ and $t>0$.
Define the radial maximal operator $T^+$ by $T^+(f)(x)\equiv\sup_{t>0}|e^{-t\cl}(f)(x)|$
for all $x\in\rn$.
Then $T^+$ is bounded on $L^p(w(x)\,dx)$ for $p\in(1,\,\fz]$
and from $L^1(w(x)\,dx)$ to
weak-$L^1(w(x)\,dx)$.
The Hardy space associated to $\cl$ is defined by
$$H^1_\cl(w(x)\,dx)\equiv\{f\in L^1(w(x)\,dx):\
\|f\|_{H^1_\cl(w(x),\,dx)}\equiv\|T^+(f)\|_{L^1(w(x)\,dx)}<\fz\}.$$

If $q>Q/2$ and $U\in \cb_q(\rn,\,|\cdot|,\,w(x)\,dx)$, letting
$\rho$ be as in \eqref{2.3},
then Dziuba\'nski \cite{d05} proved that there exists a positive constant
$C_7'$ such that for all $x,\,y\in\rn$,
\begin{equation}\label{5.4}
0\le T_t(x,\,y)\le \frac1{V_{\sqrt t}(x)}
\lf[\frac{\rho(x)}{\rho(x)+|x-y|}\r]^N\exp\lf\{-\frac{|x-y|^2}{C_7't}\r\},
\end{equation}
and
\begin{equation}\label{5.5}
 0\le \wz T_t(x,\,y)-T_t(x,\,y)\le
C\lf[\frac{\sqrt t}{\sqrt t+\rho(x)}\r]^{q-Q/2} \frac1{V_{\sqrt t}(x)}
\exp\lf\{-\frac{|x-y|^2}{C_7't}\r\}.
\end{equation}
By this, Dziuba\'nski \cite{d05}
proved that
$H^1_\cl(w(x)\,dx)=H^{1,\,\fz}_{\rho}(w(x)\,dx)$ with equivalent norms
via using a different theory of Hardy spaces on spaces of homogeneous type from here.

On the other hand, by Proposition \ref{p2.1}, $\rho$ is an admissible function.
From
\eqref{5.1} through \eqref{5.3}, Remark \ref{r2.2} (iii) and the semigroup property,
it follows that $\{\wz T_{t^2}\}_{t>0}$ is a
continuous $(1,\,N,\,N)_{\rho}$-$\ati$ for any $N>0$.
Observe that \eqref{5.4} and \eqref{5.5} implies that
$\{T_{t^2}\}_{t>0}$ and $\{\wz T_{t^2}\}_{t>0}$ satisfy
the assumptions of Theorem \ref{t4.2}.
Thus by Theorem \ref{t4.2} in Section 4,
$H^1_\cl(w(x)\,dx)=H^1_\rho (w(x)\,dx)$.
Moreover, all of the results in Sections 3 and 4
are valid for $H^1_\cl(w(x)\,dx)$. In particular, all the results in Section \ref{s3}
and
Theorem \ref{t4.1} are new
compared to the known results in \cite{d05}.

\subsection{Sub-Laplace Schr\"odinger operators on Heisenberg groups}\label{s5.3}

The $(2n+1)$-dimensional Heisenberg group $\hh^n$ is a
connected and simply connected Lie group with underlying manifold
$\rr^{2n}\times \rr$ and the multiplication
$$(x,\,t)(y,\,s)=\lf(x+y,\,t+s+2\sum_{j=1}^n[x_{n+j}y_j-x_jy_{n+j}]\r).$$
The homogeneous norm on $\hh^n$ is defined by
$|(x,\,t)|=(|x|^4+|t|^2)^{1/4}$ for all $(x,\,t)\in\hh^n$,
which induces a left-invariant metric
$d((x,\,t),\,(y,\,s))=|(-x,\,-t)(y,\,s)|$.
Moreover, there exists a positive constant $C$ such that
$|B((x,\,t),\,r)|=Cr^Q,$
where $Q\equiv(2n+2)$ is the homogeneous dimension of $\hh^n$ and
$|B((x,\,t),\,r)|$ is the Lebesgue measure of the
ball $B((x,\,t),\,r)$. The triplet $(\hh^n,\, d,\,dx)$ is an
RD-space.

A basis for the Lie algebra of Left-invariant vector fields on $\hh^n$ is given by
$$X_{2n+1}=\frac{\partial}{\partial t},
\quad X_j=\frac{\partial}{\partial x_j}+2x_{n+j}\frac{\partial}{\partial t},
\quad X_{n+j}= \frac{\partial}{\partial x_{n+j}}-2x_j\frac{\partial}{\partial t},\
j=1,\,\cdots,n.$$
All non-trivial commutators are
$[X_j,\,X_{n+j}]=4X_{2n+1}$,  $j=1,\,\cdots,\,n$.
The sub-Laplacian has the form
$\Delta_{\hh^n}=-\sum_{j=1}^{2n}X_j^2.$
See \cite{fs82,s93} for the theory of the Hardy spaces
associated to the sub-Laplacian $\Delta_{\hh^n}$.

Let $U$ be a nonnegative locally integrable function on $\hh^n$.
Define the sub-Laplacian Schr\"odinger operator by
$\cl\equiv\Delta_{\hh^n}+U.$
Denote by $\{T_t\}_{t>0}\equiv \{e^{-t\cl}\}_{t>0}$
the semigroup generated by $\cl$.
Define the Hardy space associated to $\cl$ by
$$H^1_\cl(\hh^n)\equiv\{f\in L^1(\hh^n):\,
\|f\|_{H^1_\cl(\hh^n)}\equiv\|T^+(f)\|_{L^1(\hh^n)}<\fz\}.$$

If $q>Q/2$ and $U\in \cb_q(\hh^n)$, then  C. Lin, H. Liu and Y. Liu \cite{lll}
proved that $H^1_\cl(\hh^n)=H^{1,\,q}_{\rho}(\hh^n)$
with equivalent norms for all $q\in(1,\,\fz]$,
where $\rho$ is as in \eqref{2.3}.

On the other hand, Proposition \ref{p2.1} implies
that $\rho$ is an admissible function.
It is easy to check that $\{\wz T_{t^2}\}_{t>0}
\equiv\{e^{-t^2\Delta_{\hh^n}}\}_{t>0}$ is a continuous
$(1,\,N,\,N)_{\rho}$-$\ati$ for any $N$; see, for example, \cite{fs82}.
C. Lin, H. Liu and Y. Liu \cite{lll} proved that
$\{T_{t^2}\}_{t>0}$ and $\{\wz T_{t^2}\}_{t>0}$ satisfy
the assumptions of Theorem \ref{t4.2}.
Thus applying Theorem \ref{t4.2}, we obtain
$H^1_\cl(\hh^n)=H^1_\rho (\hh^n)$ with equivalent norms.
Moreover, all the results in Sections 3 and 4
are valid for $H^1_\cl(\hh^n)$. In this case, comparing to the
results obtained in \cite{lll},
Theorem \ref{t3.1}, Theorem \ref{t3.2} (ii),
 Proposition \ref{p3.3} and Theorem \ref{t4.1} are new.

\subsection{Sub-Laplace Schr\"odinger operators on connected and
simply connected nilpotent Lie groups}\label{s5.4}

Let $\mathbb G$ be a connected and simply connected nilpotent Lie group.
Let $X\equiv\{X_1,\,\cdots,\,X_k\}$ be left invariant vector fields on $\bbg$
satisfying the H\"ormander condition, namely, $X$
together with their commutators of order $\le m$ generates the tangent space of $\bbg$
at each point of $\bbg$. Let $d$ be the Carnot-Carath\'eodory (control) distance
on $\bbg$ associated to $X$.
Fix a left invariant Haar measure $\mu$ on $\bbg$.
Then for all $x\in \bbg$, $V_r(x)=V_r(e)$, and moreover,
there exist $0<\kz\le D<\fz$ such that for all $x\in\bbg$,
$C^{-1} r^\kz\le V_r(x)\le Cr^\kz$ when $0<r\le1$,
and $C^{-1} r^D\le V_r(x)\le Cr^D$ when $r>1$;
see \cite{nsw85},
\cite{v88} and \cite{vsc92} for the details. Thus $({\mathbb \bbg},\,d,\,\mu)$ is
an RD-space.

The sub-Laplacian is given by
$\Delta_\bbg\equiv-\sum_{j=1}^kX_j^2.$
Denote by $\{\wz T_t\}_{t>0}\equiv\{e^{-t\Delta_\bbg}\}_{t>0}$
the semigroup generated by $\Delta_\bbg$.
Then there exist positive constants $C,\,C_8$ and $\wz C_8$
such that for all $t>0$ and $x,\,y\in\bbg$,
\begin{equation}\label{5.6}
C^{-1}\frac1{V_{\sqrt t}(x)}\exp\lf\{-\frac{[d(x,\,y)]^2}{\wz C_8t}\r\}\le
\wz T_t(x,\,y)\le C\frac1{V_{\sqrt t}(x)}\exp\lf\{-\frac{[d(x,\,y)]^2}{C_8t}\r\},
\end{equation}
that for all $t>0$ and $x,\,y,\,y'\in\cx$ with $d(y,\,y')\le d(x,\,y)/4$,
\begin{equation}\label{5.7}
 |\wz T_t(x,\,y)-\wz T_t(x,\,y')|\le C\frac{d(y,\,y')}{\sqrt t}
\frac1{V_{\sqrt t}(x)}\exp\lf\{-\frac{[d(x,\,y)]^2}{C_8t}\r\},
\end{equation}
and moreover, that for all $t>0$ and $x,\,y\in\bbg$,
\begin{equation}\label{5.8}
 \int_\bbg \wz T_t(x,\,z)\,d\mu(z)=1=\int_\bbg \wz T_t(z,\,y)\, d\mu(z);
\end{equation}
see, for example, \cite{v88} and \cite{vsc92} for the details.

Define the radial maximal operator $\wz T^+$ by
$\wz T^+(f)(x)\equiv\sup_{t>0}|\wz T_t(f)(x)|$ for all $x\in\bbg$.
Then $\wz T^+$ is bounded on $L^p(\bbg)$ for $p\in(1,\,\fz]$ and from $L^1(\bbg)$ to
weak-$L^1(\bbg)$. The Hardy space associated to $\Delta_{\bbg}$ is defined by
$$H^1(\bbg)\equiv\lf\{f\in L^1(\bbg):\
\|f\|_{H^1(\bbg)}\equiv\|\wz T^+(f)\|_{L^1(\bbg)}<\fz\r\};$$
see, for example, \cite{s86,s87,s90,hmy2}
for the theory of Hardy spaces associated with the sub-Laplace
operator $\Delta_\bbg$.

Let $U$ be a nonnegative locally integrable function on $\bbg$.
Then the sub-Laplace Schr\"odinger operator is defined by
$\cl\equiv\Delta_\bbg+U.$ The operator  $\cl$ generates a semigroup
$\{T_t\}_{t>0}\equiv\{e^{-t\cl}\}_{t>0}$, whose kernels
are denoted by $\{T_t(x,\,y)\}_{t>0}$ for all $x,\,y\in\bbg$.
By Kato-Trotter's product formula (see \cite{k78}),
$0\le T_t(x,\,y)\le \wz T_t(x,\,y)$ for all $t>0$ and $x,\,y\in\bbg$.
Define
the radial maximal operator $T^+$ by
$T^+(f)(x)=\sup_{t>0}|e^{-t\cl}(f)(x)|$ for all $x\in\bbg$.
Then $T^+$ is bounded on $L^p(\bbg)$ for $p\in(1,\,\fz]$ and from $L^1(\bbg)$ to
weak-$L^1(\bbg)$.
The Hardy space associated to $\cl$ is defined by
$$H^1_\cl(\bbg)\equiv\{f\in L^1(\bbg):\
\|f\|_{H^1_\cl(\bbg)}\equiv\|T^+(f)\|_{L^1(\bbg)}<\fz\}.$$

Let $q>D/2$,  $U\in \cb_q(\bbg,\,d,\,\mu)$ and
$\rho(x)$ for all $x\in\bbg$
be as in \eqref{2.3}. Then Li \cite{l99}
established some basic results concerning $\cl$, which
include estimates for fundamental solutions of $\cl$ and
the boundedness on Lebesgue spaces of some operators associated to $\cl$.
To apply the results obtained in Sections 3 and 4 to $\cl$,
we need the following estimates.

\begin{prop}\label{p5.1}
Let $q> D/2$ and $U\in \cb_q(\bbg,\,d,\,\mu)$. Then for each $N\in\nn$, there exists
a positive constant $C$ such that for all $f\in L^2(\bbg)$ and $x\in\bbg$,
\begin{equation}\label{5.9}
|[\rho(x)]^{-2N}\cl^{-N}f(x)|\le C M^{N}(f)(x),
\end{equation}
where $M$ denotes the Hardy-Littlewood maximal operator on $\bbg$ and
$ M^N\equiv M\circ\cdots\circ M$.
\end{prop}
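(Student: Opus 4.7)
The plan is to prove Proposition \ref{p5.1} by induction on $N$, using a pointwise kernel estimate for the fundamental solution $\Gamma(x,y)$ of $\cl$ together with dyadic annular decompositions and the properties of the admissible function $\rho$ established in Lemma \ref{l2.1}.

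The key ingredient, established by Li \cite{l99}, is a Shen-type estimate for the Green's function: for any $\sigma>0$, there exists $C_\sigma>0$ such that
\begin{equation*}
0\le \Gamma(x,y)\le C_\sigma\frac{[d(x,y)]^2}{V(x,y)}\left(\frac{\rho(x)}{\rho(x)+d(x,y)}\right)^{\sigma}
\end{equation*}
for all $x,y\in\bbg$ with $x\ne y$. For the base case $N=1$, I would split $\cl^{-1}f(x)=\int_\bbg\Gamma(x,y)f(y)\,d\mu(y)$ into the near region $\{d(x,y)\le\rho(x)\}$ and the far region $\{d(x,y)>\rho(x)\}$. On the near region, a dyadic decomposition into annuli $B(x,2^{-k}\rho(x))\setminus B(x,2^{-k-1}\rho(x))$ produces the bound $2^{-2k}[\rho(x)]^2 M(f)(x)$ on each annulus, summable in $k\ge 0$. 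On the far region, dyadic annuli $B(x,2^{j+1}\rho(x))\setminus B(x,2^j\rho(x))$ combined with the decay factor $(\rho(x)/d(x,y))^\sigma$ (with $\sigma>2$) yield $2^{j(2-\sigma)}[\rho(x)]^2 M(f)(x)$, again summable. Together these give $|\cl^{-1}f(x)|\le C[\rho(x)]^2 M(f)(x)$.

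For the inductive step, assume the result holds for $N-1$. Write $\cl^{-N}f=\cl^{-1}(\cl^{-(N-1)}f)$ and substitute both the kernel estimate for $\Gamma$ and the inductive bound $|\cl^{-(N-1)}f(y)|\le C[\rho(y)]^{2(N-1)}M^{N-1}(f)(y)$. On the near region $\{d(x,y)\le\rho(x)\}$, Lemma \ref{l2.1}(i) yields $\rho(y)\sim\rho(x)$, so $[\rho(y)]^{2(N-1)}$ may be replaced by $[\rho(x)]^{2(N-1)}$ and pulled out of the integral; the annular argument from the base case, applied with $M^{N-1}f$ in place of $f$, produces $[\rho(x)]^{2N}M(M^{N-1}f)(x)=[\rho(x)]^{2N}M^N(f)(x)$. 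On the far region $\{d(x,y)>\rho(x)\}$, the admissibility inequality \eqref{2.2} gives $\rho(y)\lesssim\rho(x)[d(x,y)/\rho(x)]^{k_0/(1+k_0)}$, so $[\rho(y)]^{2(N-1)}$ grows only by a power of $d(x,y)/\rho(x)$. Choosing $\sigma>2+2(N-1)k_0/(1+k_0)$ in the decay factor of $\Gamma$ absorbs this growth, and the dyadic sum in $j\ge 0$ converges to the desired bound $[\rho(x)]^{2N}M^N(f)(x)$.

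The main obstacle is the base kernel estimate for $\Gamma$, which requires extending Shen's Euclidean arguments to the nilpotent Lie group setting; fortunately, Li \cite{l99} has already carried this out, so we may invoke it directly. A secondary technical point is tracking the parameter $\sigma$ through the induction to ensure enough decay to absorb both the $[d(x,y)]^2$ growth in $\Gamma$ and the $[\rho(y)]^{2(N-1)}$ growth from the inductive hypothesis; since $\sigma$ may be taken arbitrarily large, this poses no real difficulty.
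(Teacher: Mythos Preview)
Your proposal is correct and takes essentially the same approach as the paper: both arguments invoke Li's kernel estimate $G_0(x,y)\lesssim [d(x,y)]^2 V(x,y)^{-1}[1+d(x,y)/\rho(x)]^{-\sigma}$ from \cite{l99}, perform a dyadic annular decomposition to obtain the base case, and then induct using the admissibility bound $\rho(y)\lesssim\rho(x)[1+d(x,y)/\rho(x)]^{k_0/(1+k_0)}$ to absorb the factor $[\rho(y)]^{2(N-1)}$ into the (arbitrarily large) decay exponent $\sigma$. The only cosmetic difference is that the paper runs a single dyadic sum over $j\in\zz$ rather than separating near and far regions, and it writes out the step $N=2$ explicitly before saying ``repeating the above arguments'' in place of a formal induction.
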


\begin{proof}
Let $G_0$ be the kernel of $\cl^{-1}$.
For any fixed $N>0$, by Theorem 3.6 of \cite{l99},
there exists a positive constant $C$
such that for  all $x,\,y\in\bbg$,
\begin{equation}\label{5.10}
0\le G_0(x,\,y)\le C \frac{[d(x,\,y)]^2}{[1+d(x,\,y)/\rho(x)]^{N}}\frac1{V(x,\,y)}.
\end{equation}
By this, for all $x\in\bbg$, we have
\begin{eqnarray*}
&&|[\rho(x)]^{-2} \cl^{-1}f(x)|\\
&&\quad\ls [\rho(x)]^{-2}\int_\bbg G_0(x,\,y)|f(y)|\,d\mu(y)\\
&&\quad\ls\int_\bbg \frac1{V(x,\,y)}
\frac{[d(x,\,y)/\rho(x)]^2}{[1+d(x,\,y)/\rho(x)]^{N}}
|f(y)|\,d\mu(y)\\
&&\quad\ls \lf(\sum_{j=1}^\fz 2^{-j(N-2)}+\sum_{j=-\fz}^0 2^{2j}\r)
\frac1{V_{2^{j+1}\rho(x)}(x)}
\int_{d(x,\,y)\le 2^{j+1}\rho(x)}
|f(y)|\,d\mu(y)\ls M(f)(x),
\end{eqnarray*}
where $N>2$.
Since \eqref{2.2} and Lemma \ref{l2.1} imply that
$$\frac1{\rho(y)}\gs \frac1{\rho(x)}\lf[1+\frac{d(x,\,y)}{\rho(x)}\r]^{-k_0/(1+k_0)},$$
 thus, by \eqref{5.10}, we have
\begin{eqnarray*}
|[\rho(x)]^{-4} \cl^{-2}f(x)|
&&\ls [\rho(x)]^{-4}\int_\bbg G_0(x,\,y)|\cl^{-1}f(y)|\,d\mu(y)\\
&&\ls\int_\bbg \frac1{V(x,\,y)}
\frac{[d(x,\,y)/\rho(x)]^2}{[1+d(x,\,y)/\rho(x))]^{N}}
|M(f)(y)|\,d\mu(y)\ls M^2(f)(x).
\end{eqnarray*}
Repeating the above arguments then completes the proof of Proposition \ref{p5.1}.
\end{proof}

\begin{prop}\label{p5.2}
If $q>D/2$ and $U\in \cb_q(\bbg,\,d,\,\mu)$,
then for any $N>0$, there exist positive constants $C$ and $C_8'$ such that
for all $t>0$ and $x,\,y\in\bbg$,
$$0\le T_t(x,\,y)\le C\frac1{V_{\sqrt t}(x)}
\lf[\frac{\rho(x)}{\rho(x)+d(x,\,y)}\r]^N\exp\lf\{-\frac{[d(x,\,y)]^2}{C_8't}\r\}.$$
\end{prop}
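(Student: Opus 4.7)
The plan is to prove the upper bound in two stages: first derive an unweighted Gaussian bound via pointwise domination by $\wz T_t$, then manufacture the decay factor $[\rho(x)/(\rho(x)+d(x,\,y))]^N$ by trading weights against powers of $\cl^{-1}$ through Proposition \ref{p5.1}. The nonnegativity $T_t(x,\,y)\ge 0$ and the base estimate
$$T_t(x,\,y)\le \wz T_t(x,\,y)\le CV_{\sqrt t}(x)^{-1}\exp\lf\{-[d(x,\,y)]^2/(C_8t)\r\}$$
follow at once from the Kato-Trotter product formula $T_t=\lim_{k\to\fz}(e^{-(t/k)\Delta_\bbg}e^{-(t/k)U})^k$, the positivity-preservation of each factor, and the upper Gaussian bound in \eqref{5.6}.

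For the refinement, I would fix an integer $M$ with $2M\ge N$ and use the spectral-calculus identity $e^{-t\cl}=\cl^{-M}(\cl^Me^{-t\cl})$. Writing this at the level of kernels and applying Proposition \ref{p5.1} pointwise with $f(\cdot)=\cl^MT_t(\cdot,\,y)$ yields
$$T_t(x,\,y)\le C[\rho(x)]^{2M}\,M^M\lf(\cl^MT_t(\cdot,\,y)\r)(x),$$
where $M^M$ denotes the $M$-fold iteration of the Hardy-Littlewood maximal operator. I expect that the main technical ingredient will be the pointwise kernel bound
$$|\cl^MT_t(z,\,y)|\le C_Mt^{-M}V_{\sqrt t}(z)^{-1}\exp\lf\{-[d(z,\,y)]^2/(Ct)\r\},$$
which follows from the $L^2(\bbg)$-analyticity of $\{T_t\}_{t>0}$ (giving $\|\cl^MT_t\|_{2\to2}\ls t^{-M}$) combined with Davies-Gaffney off-diagonal decay inherited from the heat-kernel estimate \eqref{5.6} and the nonnegativity of $U$; this step is the hardest part of the argument. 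Once it is in hand, the standard fact that the Hardy-Littlewood maximal function of a Gaussian profile is dominated by a Gaussian of the same type with slightly enlarged constant produces
$$T_t(x,\,y)\le C_N\lf[\frac{[\rho(x)]^2}{t}\r]^M V_{\sqrt t}(x)^{-1}\exp\lf\{-[d(x,\,y)]^2/(C't)\r\}.$$

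To convert this into the claimed form I would run a short case analysis on $d(x,\,y)$ against $\rho(x)$ and $\sqrt t$. When $d(x,\,y)\le\rho(x)$, the factor $[\rho(x)/(\rho(x)+d(x,\,y))]^N$ is bounded below by $2^{-N}$ and the base Gaussian bound already suffices. When $d(x,\,y)>\rho(x)$ and $t\le[d(x,\,y)]^2$, apply the elementary inequality $(a/t)^Me^{-b/t}\le C_M(a/b)^Me^{-b/(2t)}$ with $a=[\rho(x)]^2$ and $b=[d(x,\,y)]^2/C'$ to trade $[\rho(x)^2/t]^M$ for $[\rho(x)/d(x,\,y)]^{2M}$, absorbing a harmless factor in the exponential. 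When $d(x,\,y)>\rho(x)$ and $t>[d(x,\,y)]^2$, one has directly $\rho(x)/\sqrt t\le\rho(x)/d(x,\,y)$, so $[\rho(x)^2/t]^M=[\rho(x)/\sqrt t]^{2M}\le[\rho(x)/d(x,\,y)]^{2M}$. Since $2M\ge N$ and $\rho(x)/d(x,\,y)\sim\rho(x)/(\rho(x)+d(x,\,y))$ whenever $d(x,\,y)>\rho(x)$, all three cases yield the desired bound.
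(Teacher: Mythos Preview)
Your overall plan matches the paper's: both use the base Gaussian from $T_t\le\wz T_t$ and then invoke Proposition \ref{p5.1} via the identity $T_t=\cl^{-M}\cl^M T_t$ to produce the weight $[\rho(x)]^{2M}$. The gap is in your step 4: it is \emph{not} true that the Hardy--Littlewood maximal function of a Gaussian profile is again controlled by a Gaussian. If $g(z)=V_{\sqrt t}(z)^{-1}\exp\{-[d(z,y)]^2/(ct)\}$ and $d(x,y)=R\gg\sqrt t$, then averaging $g$ over $B(x,R)$ captures $\int g\sim1$, so $M(g)(x)\gs V_R(x)^{-1}$, which decays only polynomially in $R/\sqrt t$. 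Hence the bound $T_t(x,y)\ls[\rho(x)^2/t]^M V_{\sqrt t}(x)^{-1}\exp\{-[d(x,y)]^2/(C't)\}$ cannot be obtained by applying $M^M$ to a Gaussian estimate on $\cl^M T_t(\cdot,y)$, and your case analysis in step 5 rests on this unavailable bound. (Your step 3 may well be true, but the Davies--Gaffney argument you sketch gives $L^2$-off-diagonal decay, not a pointwise Gaussian; the upgrade is genuine extra work.)

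The paper avoids both difficulties. Instead of a Gaussian bound on $\partial_t^N T_t$, it proves only the \emph{sup} bound $|\partial_t^N T_t(x,y)|\ls t^{-N}V_{\sqrt t}(x)^{-1}$, obtained from $\|\cl^N e^{-t\cl}\|_{2\to2}\ls t^{-N}$ together with a semigroup bootstrap: first $\|T_{2t}^{(N)}(x,\cdot)\|_{L^2}\ls t^{-N}\|T_t(x,\cdot)\|_{L^2}\ls t^{-N}V_{\sqrt t}(x)^{-1/2}$, then $|T_{2t}^{(N)}(x,y)|\le\|T_t^{(N)}(x,\cdot)\|_{L^2}\|T_t(\cdot,y)\|_{L^2}$. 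Since $M^M$ of a bounded function is bounded by the same constant, Proposition \ref{p5.1} yields $T_t(x,y)\ls[\rho(x)/\sqrt t]^{2N}V_{\sqrt t}(x)^{-1}$ with \emph{no} Gaussian factor. The Gaussian is then recovered by taking the geometric mean of this with the base estimate $T_t\ls V_{\sqrt t}^{-1}e^{-d^2/(C_8t)}$, which gives the claimed bound when $\sqrt t\gs\rho(x)$. For $\sqrt t\ls\rho(x)$ one uses the base Gaussian alone: extract $[\sqrt t/(\sqrt t+d(x,y))]^N$ from the exponential and note that $s\mapsto s/(s+a)$ is increasing, so this is $\ls[\rho(x)/(\rho(x)+d(x,y))]^N$. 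Thus the paper's case split is on $\sqrt t$ versus $\rho(x)$ rather than on $d(x,y)$, and the geometric-mean trick replaces your appeal to maximal-function-of-Gaussian.
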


\begin{proof}
By \eqref{5.6} and $0\le T_t(x,\,y)\le\wz T_t(x,\,y)$
for all $t>0$ and $x,\,y\in\bbg$, we have
\begin{equation}\label{5.11}
0\le T_t(x,\,y)\le\wz T_t(x,\,y)\ls
\frac1{V_{\sqrt t}(x)}
\exp\lf\{-\frac{[d(x,\,y)]^2}{C_8t}\r\}.
\end{equation}
To prove Proposition \ref{p5.2}, it suffices to prove that for $t\ge C[\rho(x)]^2$,
\begin{equation}\label{5.12}
T_t(x,\,y)\ls
\frac1{V_{\sqrt t}(x)} \lf[\frac{\rho(x)}{\sqrt t}\r]^N.
\end{equation}
In fact, if this holds, then for
$t\ge C[\rho(x)]^2$,
\begin{eqnarray*}
T_t(x,\,y)&&\ls
\frac1{V_{\sqrt t}(x)}\lf[\frac{\rho(x)}{\sqrt t}\r]^N
\ls \frac1{V_{\sqrt t}(x)}\lf[\frac{\rho(x)}{\rho(x)+d(x,\,y)}\r]^N
\lf[\frac{\sqrt t+d(x,\,y)}{\sqrt t}\r]^{N},
\end{eqnarray*}
which together with \eqref{5.11} via the geometric mean yields the desired conclusion.
For $t\le C[\rho(x)]^2$,
since the function $f(t)=\frac{t}{t+a}$ is increasing in $t$,
by \eqref{5.11}, we have
\begin{eqnarray*}
T_t(x,\,y)
 \ls
\frac1{V_{\sqrt t}(x)}
\lf[\frac{\rho(x)}{\rho(x)+d(x,\,y)}\r]^N\exp\lf\{-\frac{[d(x,\,y)]^2}{Ct}\r\}.
\end{eqnarray*}

To prove \eqref{5.12}, observe that $\cl$ is self-adjoint.
For any $f\in L^2(\bbg)$, by the well-known spectral theorem, we have
\begin{equation}\label{5.13}
\|\partial^N_tT_t(f)\|_{L^2(\bbg)}= t^{-N} \|(t\cl)^Ne^{-t\cl}f\|_{L^2(\bbg)}
\le C(N)t^{-N}\|f\|_{L^2(\bbg)}.
\end{equation}
Set $T^{(N)}_t(x,\,y)\equiv\partial^N_s T_s(x,\,y)\Big|_{s=t}$
for all $x,\,y\in\bbg$.
Notice that
\begin{eqnarray*}
T_{2t}^{(N)}(x,\,y)&&=\partial^N_s T_s(x,\,y)\Big|_{s=2t}=
\partial^N_sT_{t+s}(x,\,y)\Big|_{s=t}=
\partial_sT_s(T_t(x,\,\cdot))(y)\Big|_{s=t}\\
&&=
\lf(\partial^N_sT_s\Big|_{s=t}\r)(T_t(x,\,\cdot))(y)
=(\partial^N_tT_t)(T_t(x,\,\cdot))(y),
\end{eqnarray*}
which together with \eqref{5.13} and \eqref{5.11} implies that
\begin{eqnarray*}
\|T_{2t}^{(N)}(x,\,\cdot)\|_{L^2(\bbg)}&&\ls \frac1{t^N}\|T_t(x,\,\cdot)\|_{L^2(\bbg)}\\
&&\ls \frac1{t^N}\lf\{\int_\bbg \frac1{[V_{\sqrt t}(x)]^2}
\exp\lf\{-\frac{[d(x,\,y)]^2}{ct}\r\}\,d\mu(y)\r\}^{1/2}\\
&&\ls \frac1{t^N}\frac1{[V_{\sqrt t}(x)]^{1/2}}
\lf\{\sum_{j=0}^\fz 2^{jD/2}\exp\{-2^j\}\r\}^{1/2}
\ls \frac1{t^N}\frac1{[V_{\sqrt t}(x)]^{1/2}}.
\end{eqnarray*}
This together with the H\"older inequality, \eqref{5.13} and \eqref{5.11}
again further yields that
\begin{eqnarray*}
|T_{2t}^{(N)}(x,\,y)|&&\ls\|T^{(N)}_t(x,\,\cdot)\|_{L^2(\bbg)}
\|T_t(x,\,\cdot)\|_{L^2(\bbg)}
\ls \frac1{ t^N}\frac1{V_{\sqrt t}(x)}.
\end{eqnarray*}
Thus, by Proposition \ref{p5.1}, we have
\begin{eqnarray*}
T_t(x,\,y)&&=[\rho(x)]^{2N}\lf|[\rho(x)]^{-2N}\cl_y^{-N}\cl_y^N(T_t(x,\,\cdot))(y)\r|\\
&&=[\rho(x)]^{2N}\lf|[\rho(x)]^{-2N}\cl_y^{-N}\partial_t^{(N)}(T_t(x,\,\cdot))(y)\r|\\
&&\ls[\rho(x)]^{2N}M^N\lf(\partial_t^{(N)} T_t(x,\,\cdot)\r)(y)\\
&&\ls[\rho(x)]^{2N}\lf\|\partial_t^{(N)} T_t(x,\,\cdot)\r\|_{L^\fz(\bbg)}
\ls \frac1{V_{\sqrt t}(x)}\lf[\frac{\rho(x)}{\sqrt t}\r]^{2N},
\end{eqnarray*}
which implies \eqref{5.12} and hence, completes the proof of Proposition \ref{p5.2}.
\end{proof}

For $t\ge 0$, set $E_t\equiv\wz T_t-T_t.$
Denote also by $E_t$ the kernel of $E_t$. Then for all $x,\,y\in\bbg$,
\begin{equation}\label{5.14}
E_t(x,\,y)=\wz T_t(x,\,y)-T_t(x,\,y)=
\int_0^t\int_\bbg \wz T_s(x,\,z)U(z) T_{t-s}(z,\,y)\,d\mu(z)\,ds;
\end{equation}
see, for example, \cite{d05}. To estimate $E_t$, we need the following
estimate.

\begin{lem}\label{l5.1}
If $q>D/2$ and $U\in \cb_q(\bbg,\,d,\,\mu)$, then for any
positive constants $\wz C$ and $C'$, there exists a
positive constant $C$ such that for all
$x\in\bbg$ and $t>0$ with $\sqrt t<\wz C\rho(x)$,
$$\int_\bbg\frac{U(z)}{V_{\sqrt t}(x)}
\exp\lf\{-\frac{[d(x,\,z)]^2}{C't}\r\}\,d\mu(z)
\le C \frac1t\lf[\frac{\sqrt t}{\rho(x)}\r]^{2-D/q}.$$
\end{lem}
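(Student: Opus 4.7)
The plan is to dyadically split the Gaussian average into annuli centered at $x$ of radii $\sim 2^j\sqrt t$ and control each piece by the mean value of $U$ on a ball of that radius, which I then estimate through the reverse H\"older condition together with the defining property \eqref{2.6} of $\rho(x)$. The very first step is the routine decomposition
\begin{equation*}
\int_\bbg\frac{U(z)}{V_{\sqrt t}(x)}\exp\lf\{-\frac{[d(x,z)]^2}{C't}\r\}d\mu(z)\ls\sum_{j=0}^\fz e^{-2^{2j}/(2C')}\frac{1}{V_{\sqrt t}(x)}\int_{B(x,2^{j+1}\sqrt t)}U(z)\,d\mu(z),
\end{equation*}
obtained by partitioning $\bbg$ into $B(x,\sqrt t)$ together with the annuli $B(x,2^{j+1}\sqrt t)\setminus B(x,2^j\sqrt t)$ for $j\ge 0$ and replacing the Gaussian by its supremum on each piece.

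Next, I would prove the auxiliary estimate that for all $r\in(0,\,\rho(x)]$,
\begin{equation*}
\frac{1}{V_r(x)}\int_{B(x,r)}U(z)\,d\mu(z)\ls\frac{1}{r^2}\lf[\frac{r}{\rho(x)}\r]^{2-D/q},
\end{equation*}
exactly by the chain used in the proof of Proposition \ref{p2.1}: apply H\"older at scale $r$, enlarge to $B(x,\rho(x))$ using the upper doubling bound $V_{\rho(x)}(x)/V_r(x)\ls [\rho(x)/r]^D$, invoke the reverse H\"older condition $U\in\cb_q(\bbg)$ at scale $\rho(x)$, and close out via \eqref{2.6}. On $\bbg$ the measure $U(z)\,d\mu(z)$ is automatically doubling since $V_r(e)$ is continuous in $r$, so the full machinery of Proposition \ref{p2.1} is available.

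Combining these, for each index $j$ with $2^{j+1}\sqrt t\le\rho(x)$, the $j$-th summand is bounded by
\begin{equation*}
e^{-2^{2j}/(2C')}\cdot\frac{V_{2^{j+1}\sqrt t}(x)}{V_{\sqrt t}(x)}\cdot\frac{1}{(2^{j+1}\sqrt t)^2}\lf[\frac{2^{j+1}\sqrt t}{\rho(x)}\r]^{2-D/q}\ls e^{-2^{2j}/(2C')}\cdot 2^{j(D-D/q)}\cdot\frac{1}{t}\lf[\frac{\sqrt t}{\rho(x)}\r]^{2-D/q},
\end{equation*}
and summation over such $j$ produces the target bound because the super-exponential Gaussian factor beats the polynomial $2^{j(D-D/q)}$. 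For the tail indices with $2^{j+1}\sqrt t>\rho(x)$, which by the hypothesis $\sqrt t<\wz C\rho(x)$ can occur only when $2^j\gs 1/(2\wz C)$, I would use the doubling property of $U(z)\,d\mu(z)$ to write $\int_{B(x,2^{j+1}\sqrt t)}U\,d\mu\ls (2^{j+1}\sqrt t/\rho(x))^{n_1}\int_{B(x,\rho(x))}U\,d\mu\sim(2^{j+1}\sqrt t/\rho(x))^{n_1}V_{\rho(x)}(x)/[\rho(x)]^2$; the Gaussian factor again collapses the tail sum to a constant, which is absorbed into the target bound since $\sqrt t<\wz C\rho(x)$ forces $\rho(x)/\sqrt t$ to remain bounded below by $1/\wz C$.

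The only real technical obstacle is handling the transition regime $\sqrt t\sim\rho(x)$, where the dichotomy between $2^{j+1}\sqrt t\le\rho(x)$ and its opposite is not asymptotic and a naive application of either estimate could fail to give the correct power of $\sqrt t/\rho(x)$. In this regime, however, all quantities of the form $[\rho(x)/\sqrt t]^m$ and $[\sqrt t/\rho(x)]^m$ are simultaneously pinched between absolute constants, so the estimate trivially reduces to a bound of order $1/t$, matching the claimed inequality.
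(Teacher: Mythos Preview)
Your overall strategy---dyadic annuli, then \eqref{2.4} together with \eqref{2.6} for radii $\le\rho(x)$, and the doubling of $U\,d\mu$ for radii $>\rho(x)$---is exactly the paper's. The paper merely indexes the annuli by $\sqrt{2^jt}$ rather than $2^j\sqrt t$, and cites Li's bound $\frac{R^2}{V_R(x)}\int_{B(x,R)}U\ls(R/\rho(x))^\ell$ for $R\ge\rho(x)$ in place of your appeal to \eqref{2.5}; these are cosmetic differences.

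There is, however, a real gap in your tail argument. You note that the tail indices satisfy $2^{j+1}\sqrt t>\rho(x)$ and then record only the consequence $2^j\gs 1/(2\wz C)$, after which you claim the Gaussian ``collapses the tail sum to a constant'' that is ``absorbed into the target bound since $\rho(x)/\sqrt t$ remains bounded below.'' This does not work: with $\lambda\equiv\rho(x)/\sqrt t$, your tail bound comes out (after using $V_{\rho(x)}(x)/V_{\sqrt t}(x)\ls\lambda^D$) as $\ls\lambda^{D-2}/t$, whereas the target is $\lambda^{D/q-2}/t$; since $D-D/q>0$ and $\lambda$ can be arbitrarily large, a constant bound on the tail sum is insufficient, and the lower bound $\lambda\ge 1/\wz C$ gives no help in this direction. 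What you must actually use is that the \emph{starting} tail index is $j_0\sim\log_2\lambda$, so the first (and dominant) Gaussian weight is $e^{-c\lambda^2}$; this super-exponential factor then kills the surplus polynomial $\lambda^{D-D/q}$. The paper does precisely this: it fixes $j_0$ comparable to $\log_2\lambda$, bounds the $j>j_0$ piece by $\sum_{j>j_0}\frac1{2^jt}e^{-2^j}(\sqrt{2^jt}/\rho(x))^\ell$, and lets the exponential absorb everything. Once you insert this observation, your argument goes through without further change.
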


\begin{proof} We first recall that Li in \cite[Lemma 2.8]{l99} proved that
there exists a positive constant $\ell$ such that
for all $x\in\bbg$ and $R\ge\rho(x)$,
\begin{equation*}
\frac{R^2}{V_R(x)}\int_{B(x,\,R)}U(z)\,d\mu(z)\ls\lf[\frac R{\rho(x)}\r]^\ell,
\end{equation*}
which is also easy to be deduced from \eqref{2.5} and \eqref{2.6}.
By this, \eqref{2.4} and \eqref{2.6},
letting $j_0\in\nn$ such that $2^{j_0-1}\le \wz C\rho(x)/\sqrt t<2^{j_0}$,
we then have
\begin{eqnarray*}
&&\int_\bbg\frac{U(z)}{V_{\sqrt t}(x)}
\exp\lf\{-\frac{[d(x,\,z)]^2}{C't}\r\}\,d\mu(z)\\
&&\quad\ls\sum_{j=0}^\fz\frac1{V_{\sqrt t}(x)} e^{-2^j}
\int_{d(x,\,z)<\sqrt {2C'2^jt}} U(z)\,d\mu(z)\\
&&\quad\ls\sum_{j=1}^{j_0}\frac1{2^jt}e^{-2^j}\lf[\frac{\sqrt{2^jt}}{\rho(x)}\r]^{2-D/q}
+ \sum_{j=j_0+1}^\fz\frac1{2^jt}e^{-2^j}
\lf[\frac {\sqrt{2^jt}}{\rho(x)}\r]^\ell
\ls \frac1t\lf[\frac{\sqrt t}{\rho(x)}\r]^{2-D/q},
\end{eqnarray*}
which completes the proof of Lemma \ref{l5.1}.
\end{proof}

\begin{prop}\label{p5.3}
If $q>D/2$ and $U\in \cb_q(\bbg,\,d,\,\mu)$, then
for each $N>0$, there exist positive constants $C$ and $C_9$ such that
for all $t>0$ and $x,\,y\in \bbg$,
\begin{equation}\label{5.15}
0\le E_t(x,\,y)\le
C\lf[\frac{\sqrt t}{\sqrt t+\rho(x)}\r]^{2-D/q} \frac1{V_{\sqrt t}(x)}
\exp\lf\{-\frac{[d(x,\,y)]^2}{C_9t}\r\}.
\end{equation}
\end{prop}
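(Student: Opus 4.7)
The proof proceeds by first checking nonnegativity of $E_t$ and then establishing the upper bound through a dichotomy on the size of $\sqrt t$ relative to $\rho(x)$. Nonnegativity $E_t\ge 0$ is immediate from the pointwise domination $0\le T_t(x,y)\le \wz T_t(x,y)$ (Kato-Trotter, already recorded before \eqref{5.14}).

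For the upper bound, I fix a small constant $c_0\in(0,1)$ to be chosen at the end. In the easy regime $\sqrt t\ge c_0\rho(x)$, the prefactor $[\sqrt t/(\sqrt t+\rho(x))]^{2-D/q}$ is bounded below by a positive constant (note $2-D/q>0$ since $q>D/2$), so the bound follows directly from $E_t\le \wz T_t$ together with the Gaussian estimate \eqref{5.6}. The substantive regime is $\sqrt t<c_0\rho(x)$, which I handle through the Duhamel identity \eqref{5.14}. The first move is to split off the Gaussian factor $\exp\{-[d(x,y)]^2/(C_9 t)\}$ by the elementary inequality
$$\frac{[d(x,z)]^2}{s}+\frac{[d(z,y)]^2}{t-s}\ge \frac{[d(x,z)+d(z,y)]^2}{t}\ge \frac{[d(x,y)]^2}{t}$$
(Cauchy-Schwarz plus the triangle inequality), combined with \eqref{5.6} for $\wz T_s$ and with $T_{t-s}\le \wz T_{t-s}$. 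After halving the Gaussian and extracting $\exp\{-[d(x,y)]^2/(2C_8 t)\}$, the task reduces to bounding
$$\cj\equiv \int_0^t\!\!\int_\bbg \frac{U(z)}{V_{\sqrt s}(e)V_{\sqrt{t-s}}(e)}\exp\lf\{-\frac{[d(x,z)]^2}{C's}-\frac{[d(z,y)]^2}{C'(t-s)}\r\}\,d\mu(z)\,ds$$
by $\frac{1}{V_{\sqrt t}(e)}[\sqrt t/\rho(x)]^{2-D/q}$, where I use that $V_r$ is independent of the center on $\bbg$.

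The plan is to split $\cj$ at $s=t/2$. On $[0,t/2]$, one has $V_{\sqrt{t-s}}(e)\sim V_{\sqrt t}(e)$; after discarding $\exp\{-[d(z,y)]^2/(C'(t-s))\}\le 1$, Lemma \ref{l5.1} (applicable since $\sqrt s<\sqrt t<c_0\rho(x)$) bounds the inner $z$-integral by $s^{-1}[\sqrt s/\rho(x)]^{2-D/q}$, and integrating in $s$ gives $\int_0^{t/2}s^{-D/(2q)}\,ds\sim t^{1-D/(2q)}$ (finite because $q>D/2$), exactly reproducing the desired bound. For $s\in[t/2,t]$, the symmetry $E_t(x,y)=E_t(y,x)$ (both semigroups have self-adjoint kernels) together with the change of variables $s\mapsto t-s$ reduces this half to the first case with $x$ and $y$ interchanged, yielding the analogous bound but with $\rho(y)$ in place of $\rho(x)$.

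The main obstacle will be reconciling this $\rho(y)$-bound with the claimed $\rho(x)$-bound. By Lemma \ref{l2.1}(iii) combined with \eqref{2.2}, the ratio $\rho(x)/\rho(y)$ is at most $C[1+d(x,y)/\rho(x)]^{k_0}$, so the mismatch is polynomial in $d(x,y)/\rho(x)$. Since $[d(x,y)]^2/t$ is at least of order $[d(x,y)/\rho(x)]^2/c_0^2$, a small slice of the Gaussian $\exp\{-[d(x,y)]^2/(C_9 t)\}$ absorbs the polynomial mismatch $[1+d(x,y)/\rho(x)]^{k_0(2-D/q)}$ at the cost of slightly enlarging $C_9$. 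A careful choice of $c_0$ and $C_9$ keeps all constants uniform. This polynomial-versus-Gaussian bookkeeping, together with ensuring that the final $C_9$ is independent of $x$, $y$, $t$, is the most delicate step in the argument.
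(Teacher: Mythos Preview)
Your argument is essentially correct and in one respect cleaner than the paper's. The paper works under the trichotomy $\sqrt t\ge C\rho(x)$; $\sqrt t\ge C\rho(y)$ with $\sqrt t\le C\rho(x)$; and $\sqrt t\le C[\rho(x)\wedge\rho(y)]$. In the third case it extracts the Gaussian in $d(x,y)$ by a \emph{spatial} split $W_1=\{z:d(x,z)\ge d(x,y)/2\}$, $W_2=W_1^\complement$ (so that one of $d(x,z),d(z,y)$ always dominates $d(x,y)/2$), and then applies Lemma~\ref{l5.1}. Your use of the convexity inequality
\[
\frac{[d(x,z)]^2}{s}+\frac{[d(z,y)]^2}{t-s}\ge\frac{[d(x,z)+d(z,y)]^2}{t}\ge\frac{[d(x,y)]^2}{t}
\]
extracts the Gaussian in one stroke and avoids the $W_1/W_2$ bookkeeping entirely; after that the two proofs coincide (split the $s$-integral at $t/2$, apply Lemma~\ref{l5.1}, integrate $\int_0^{t/2}s^{-D/(2q)}\,ds$).

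There is one small gap in your case analysis. In the symmetric half $s\in[t/2,t]$ you appeal to Lemma~\ref{l5.1} centered at $y$, but that lemma carries the hypothesis $\sqrt{t}<\wz C\rho(y)$, and your ``hard regime'' only assumes $\sqrt t<c_0\rho(x)$. The paper avoids this by isolating the intermediate case $\sqrt t\ge C\rho(y)$, $\sqrt t\le C\rho(x)$ as a separate easy case: there $E_t\le\wz T_t$ already gives the Gaussian bound with the $\rho(y)$-prefactor bounded below, and one converts $\rho(y)$ to $\rho(x)$ exactly by the polynomial-versus-Gaussian step you describe (using Lemma~\ref{l2.1}(iii) and $\sqrt t\le C\rho(x)$ to bound $[\rho(x)/\rho(y)]^{2-D/q}\ls[1+d(x,y)/\sqrt t]^{k_0(2-D/q)}$). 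You have all the ingredients; you just need to insert this sub-case so that when you invoke Lemma~\ref{l5.1} at $y$ you may legitimately assume $\sqrt t<c_0\rho(y)$ as well. With that correction your proof is complete, and the invocation of $E_t(x,y)=E_t(y,x)$ is actually unnecessary: once $T_{t-s}\le\wz T_{t-s}$ has been used, your integrand $\cj$ is already symmetric under $(x,s)\leftrightarrow(y,t-s)$.
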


\begin{proof}
By \eqref{5.11}, we have
$$0\le E_t(x,\,y)\ls \frac1{V_{\sqrt t}(x)}
\exp\lf\{-\frac{[d(x,\,y)]^2}{Ct}\r\}.$$
Thus, if $\sqrt t\ge C\rho(x)$, then \eqref{5.15} follows from this estimate.
If $\sqrt t\ge C\rho(y)$ and $\sqrt t\le C\rho(x)$,  then by \eqref{2.2} and
Lemma \ref{l2.1} (ii), we obtain
\begin{equation*}
1\ls \frac{\sqrt t}{\rho(y)}\ls \frac{\sqrt t}{\rho(x)}
\lf[\frac{\rho(x)+d(x,\,y)}{\rho(x)}\r]^{k_0}
\ls \frac{\sqrt t}{\rho(x)} \lf[\frac{\sqrt t+d(x,\,y)}{\sqrt t}\r]^{k_0},
\end{equation*}
which implies that
\begin{eqnarray*}
E_t(x,\,y)&&\ls\lf[\frac{\sqrt t}{\rho(x)}\r]^{2-D/q}\frac1{V_{\sqrt t}(x)}
\exp\lf\{-\frac{[d(x,\,y)]^2}{Ct}\r\}.
\end{eqnarray*}
If $t\le C[\rho(x)\wedge\rho(y)]$, then set
$W_1\equiv\{z\in \bbg:\ d(z,\,x)\ge d(x,\,y)/2\}$ and
$W_2\equiv\bbg\setminus W_1$. By \eqref{5.14} and \eqref{5.11}, we have
\begin{eqnarray*}
E_t(x,\,y)&&\le \lf\{\int_0^{t/2}\int_{W_1}+
\int_0^{t/2}\int_{W_2}
+\int_{t/2}^t\int_{W_1}
+\int_{t/2}^t\int_{W_2}\r\}
\frac1{V_{\sqrt s}(x)}\\
&&\quad\times
\exp\lf\{-\frac{[d(x,\,z)]^2}{Cs}\r\}
\frac{U(z)}{V_{\sqrt{t-s}}(y)}
\exp\lf\{-\frac{[d(z,\,y)]^2}{C(t-s)}\r\}
\,d\mu(z)\,ds\\
&&\equiv Z_1+Z_2+Z_3+Z_4.
\end{eqnarray*}
Notice that if $0<s<t/2$, then $t-s\sim t$. Then, by Lemma \ref{l5.1},
we obtain
\begin{eqnarray*}
Z_1&&\ls\frac1{V_{\sqrt t}(y)}\exp\lf\{-\frac{[d(x,\,y)]^2}{Ct}\r\}\int_0^{t/2}
\int_{W_1}\frac{U(z)}{V_{\sqrt s}(x)}
\exp\lf\{-\frac{[d(x,\,z)]^2}{Cs}\r\}\,d\mu(z)\,ds\\
 &&\ls\frac1{V_{\sqrt t}(y)}\exp\lf\{-\frac{[d(x,\,y)]^2}{Ct}\r\}
 \int_0^{t/2}\frac1s\lf[\frac{\sqrt s}{\rho(x)}\r]^{2-D/q}\,ds\\
 &&\ls \lf[\frac{\sqrt t}{\rho(x)}\r]^{2-D/q}\frac1{V_{\sqrt t}(x)}
\exp\lf\{-\frac{[d(x,\,y)]^2}{Ct}\r\}.
 \end{eqnarray*}
If $0<s<t/2$ and $z\in W_2$, then $t\sim t-s$
and $d(y,\,z)\ge d(x,\,y)-d(x,\,z)\ge d(x,\,y)/2$,
which together with Lemma \ref{l5.1} imply that
\begin{eqnarray*}
Z_2&&\ls\frac1{V_{\sqrt t}(y)}
\exp\lf\{-\frac{[d(x,\,y)]^2}{Ct}\r\}\int_0^{t/2}\int_{W_2}\frac{U(z)}{V_{\sqrt s}(x)}
\exp\lf\{-\frac{[d(x,\,z)]^2}{Cs}\r\}\,d\mu(z)\,ds\\
&&\ls\lf[\frac{\sqrt t}{\rho(x)}\r]^{2-D/q}\frac1{V_{\sqrt t}(x)}
\exp\lf\{-\frac{[d(x,\,y)]^2}{Ct}\r\}.
\end{eqnarray*}
The estimates for $Z_3$ and $Z_4$ are similar and
we omit the details, which completes the proof of Proposition \ref{p5.3}.
\end{proof}

By Proposition \ref{p2.1}, $\rho$ as in \eqref{2.3} is an admissible function.
From \eqref{5.6}, \eqref{5.7}, \eqref{5.8} and Remark \ref{r2.2} (iii) together with
the semigroup property of $\{\wz T_t\}_{t>0}$, it follows that, for any $N>0$,
$\{\wz T_{t^2}\}_{t>0}$ is a $(1,\,N,\,N)$-$\ati$.
Moreover, Propositions \ref{p5.2} and \ref{p5.3} imply that
the assumptions (i) and (ii) of
Theorem \ref{t4.2} hold for $\{\wz T_{t^2}\}_{t>0}$ and $\{T_{t^2}\}_{t>0}$.
Applying the results obtained in Sections \ref{s3} and \ref{s4}
directly to $\cl$, we have the following conclusions.

\begin{thm}\label{t5.1}
Let $q>D/2$ and $U\in \cb_q(\bbg,\,d,\,\mu)$. If $f\in H^1_\cl(\bbg)$,
then $f\in L^1(\bbg)$ and  $K_\rho (f)-f\in H^1(\bbg)$;
moreover, there exists a positive constant $C$ such that for all
$f\in H^1_\cl(\bbg)$,
$\|K_\rho (f)-f\|_{H^1(\bbg)}\le C\|f\|_{H^1_\rho (\bbg)}.$
 \end{thm}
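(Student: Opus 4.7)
The plan is to identify $H^1_\cl(\bbg)$ with the localized Hardy space $H^1_\rho(\bbg)$ associated to the admissible function $\rho$ defined by \eqref{2.3}, and then to apply Theorem \ref{t3.1} directly. Note that $f\in L^1(\bbg)$ is already built into the definition of $H^1_\cl(\bbg)$; alternatively it also follows from Lemma \ref{l3.1}(i) once the identification is in place.

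First I would check that $\rho$ is admissible on $\bbg$. Since $q>D/2$ and $\bbg$ is a connected Lie group so that $\mu(B(x,r))$ varies continuously with $r$, the remarks following Definition \ref{d2.2}, via \cite[Theorem 17]{st}, yield that the measure $U(z)\,d\mu(z)$ has the doubling property; Proposition \ref{p2.1} then gives \eqref{2.2} for $\rho$.

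Next I would verify that $\{\wz T_{t^2}\}_{t>0}$ is a continuous $(1,N,N)$-$\ati$ for every $N>0$: the Gaussian size bound \eqref{5.6} and H\"older regularity \eqref{5.7} produce conditions (i), (ii), (iii) of Definition \ref{d2.3} (in the continuous version of Remark \ref{r2.2}(ii)), stochastic completeness \eqref{5.8} gives (v), and the mixed regularity (iv) follows by writing $\wz T_{t^2}=\wz T_{t^2/2}\circ\wz T_{t^2/2}$ and invoking Remark \ref{r2.2}(iii). Propositions \ref{p5.2} and \ref{p5.3} then say precisely that the pair $(\{T_{t^2}\}_{t>0},\{\wz T_{t^2}\}_{t>0})$ satisfies hypotheses (i) and (ii) of Theorem \ref{t4.2} for suitable positive exponents $\dz_1,\dz_2,\dz_3$---with $\dz_1=2-D/q>0$ coming from Proposition \ref{p5.3} and the Gaussian factor in Proposition \ref{p5.2} providing arbitrarily large polynomial decay in $d(x,y)/t$. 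Theorem \ref{t4.2} therefore delivers $H^1_\cl(\bbg)=H^1_\rho(\bbg)$ with equivalent norms.

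With this identification the conclusion is immediate: for $f\in H^1_\cl(\bbg)=H^1_\rho(\bbg)$, Theorem \ref{t3.1} produces $f-K_\rho(f)\in H^1(\bbg)$ together with the bound $\|f-K_\rho(f)\|_{H^1(\bbg)}\ls\|f\|_{H^1_\rho(\bbg)}\sim\|f\|_{H^1_\cl(\bbg)}$, which is exactly the stated inequality since $K_\rho(f)-f=-(f-K_\rho(f))$ has the same $H^1$-norm. The only genuinely non-trivial input for this step is the pair of heat-kernel comparison estimates in Propositions \ref{p5.2} and \ref{p5.3}, which themselves depend crucially on Li's fundamental-solution bound \eqref{5.10} and on the reverse H\"older control of $U$; once those are in hand the rest is a formal deployment of the general theory of Sections \ref{s3} and \ref{s4}, so no essentially new difficulty arises at this point.
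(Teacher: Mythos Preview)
Your proposal is correct and follows essentially the same route as the paper: verify that $\rho$ is admissible via Proposition \ref{p2.1}, that $\{\wz T_{t^2}\}_{t>0}$ is a continuous $(1,N,N)$-$\ati$ via \eqref{5.6}--\eqref{5.8} and Remark \ref{r2.2}(iii), and that Propositions \ref{p5.2} and \ref{p5.3} supply the hypotheses of Theorem \ref{t4.2}, whence $H^1_\cl(\bbg)=H^1_\rho(\bbg)$ and Theorem \ref{t3.1} finishes the argument. The paper presents exactly this reasoning in the paragraph immediately preceding the statement of Theorem \ref{t5.1}.
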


\begin{thm}\label{t5.2}
If $q>D/2$ and $U\in \cb_q(\bbg,\,d,\,\mu)$, then
the following are equivalent:

\noindent (i) $f\in H^1_\cl(\bbg)$;

\noindent(ii) $f,\, T^+_{\rho}(f)\in L^1(\bbg)$, where $T^+_{\rho}$
is defined as in Remark \ref{r4.2} with $\wz S_t$ replaced by $T_{t^2}$;

\noindent(iii) $f\in H^1_\rho (\bbg)$;

\noindent(iv) there exists $r\in(1,\,\fz]$ such that
$f\in H^{1,\,r}_{\rho}(\bbg)$;

\noindent(v) there exist $\ez\in(0,\,1)$ and $\bz,\,\gz\in(0,\,\ez)$ such that
$f\in (\cg^\ez_0(\bz,\,\gz))'$ and
$\wz T^+_{\rho}(f)\in L^1(\bbg)$,
where $\wz T^+_{\rho}$
is defined as in Remark \ref{r4.2} with $\wz S_t$ replaced by $\wz T_{t^2}$.

\noindent Moreover, if $r\in(1,\,\fz]$, then for all $f\in L^1(\bbg)$,
$$\|f\|_{H^1_\cl(\bbg)}\sim \|T^+_{\rho}(f)\|_{L^1(\bbg)}
\sim\|\wz T^+_{\rho}(f)\|_{L^1(\bbg)}\sim
\|G_{\rho}(f)\|_{L^1(\bbg)}\sim \|f\|_{H^{1,\,r}_{\rho}(\bbg)}.$$
\end{thm}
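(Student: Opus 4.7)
The plan is to reduce both theorems to the abstract results from Sections \ref{s3} and \ref{s4} by verifying that the sub-Laplace Schr\"odinger semigroup on $\bbg$ fits exactly into the framework developed there. The admissibility of the auxiliary function $\rho$ defined in \eqref{2.3} will follow from Proposition \ref{p2.1}, since $q>D/2$, $U\in\cb_q(\bbg,d,\mu)$, and the doubling of $U(z)\,d\mu(z)$ on $\bbg$ holds (noting that $\mu(B(x,r))$ depends continuously on $r$ in this setting, which by \cite[Theorem 17]{st} gives the required doubling).

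The first step is to show that $\{\wz T_{t^2}\}_{t>0}$ is a continuous $(1,N,N)$-$\ati$ for every $N>0$. The size estimate \eqref{5.6}, the regularity \eqref{5.7} (for one variable, and the other by symmetry of the heat kernel), and the conservation \eqref{5.8} give four of the five defining properties; the double-difference regularity condition (iv) of Definition \ref{d2.3} is then obtained from the semigroup identity $\wz T_{t^2}=\wz T_{t^2/2}\wz T_{t^2/2}$ together with Remark \ref{r2.2}(iii). Second, Propositions \ref{p5.2} and \ref{p5.3} verify exactly assumptions (i) and (ii) of Theorem \ref{t4.2} with $\dz_2=N$ large, $\dz_3=N$, and $\dz_1=2-D/q>0$; here one sets $T_t\equiv e^{-t\cl}$ acting through its integral kernel $T_{t^2}(x,y)$.

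With the above verifications in hand, Theorem \ref{t5.2} is essentially immediate: Theorem \ref{t4.2} applied to $\{T_{t^2}\}_{t>0}$ and $\{\wz T_{t^2}\}_{t>0}$ gives the equivalence among (i), (ii), (iii) together with the norm equivalences $\|f\|_{H^1_\cl(\bbg)}\sim\|T^+_\rho(f)\|_{L^1(\bbg)}\sim\|f\|_{H^1_\rho(\bbg)}$ (noting that $H^1_\cl(\bbg)$ is defined by $T^+(f)\in L^1$, and $\|f\|_{L^1(\bbg)}\le\|T^+_\rho(f)\|_{L^1(\bbg)}$ by Remark \ref{r4.1}(i) so the $L^1$-condition on $f$ in (ii) is automatic). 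The equivalence (iii)$\iff$(iv) and the corresponding norm equivalence $\|f\|_{H^1_\rho(\bbg)}\sim\|f\|_{H^{1,r}_\rho(\bbg)}$ for every $r\in(1,\fz]$ are furnished by Theorem \ref{t3.2}(i) combined with Lemma \ref{l3.1}(ii). Finally, since $\{\wz T_{t^2}\}_{t>0}$ is a continuous $(1,N,N)$-$\ati$, Theorem \ref{t4.1} together with Remark \ref{r4.2} gives (iii)$\iff$(v) and the equivalence $\|G_\rho(f)\|_{L^1(\bbg)}\sim\|\wz T^+_\rho(f)\|_{L^1(\bbg)}$.

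Theorem \ref{t5.1} is then a direct application of Theorem \ref{t3.1} to the admissible function $\rho$: the inclusion $H^1_\cl(\bbg)\subset L^1(\bbg)$ has already been obtained above, and $K_\rho(f)-f\in H^1(\bbg)$ with the required norm bound is precisely the content of Theorem \ref{t3.1}. The main obstacle in all of this is not the present theorems themselves but rather the preparatory estimates of Propositions \ref{p5.2} and \ref{p5.3}, which translate the reverse H\"older hypothesis on $U$ into the perturbation bound relating the kernel of $e^{-t\cl}$ to that of $e^{-t\Delta_\bbg}$; once those are in hand the reduction to Sections \ref{s3} and \ref{s4} is a matter of bookkeeping.
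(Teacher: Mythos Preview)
Your proposal is correct and follows essentially the same route as the paper: the paper likewise observes that Proposition \ref{p2.1} gives admissibility of $\rho$, that \eqref{5.6}--\eqref{5.8} together with the semigroup property and Remark \ref{r2.2}(iii) make $\{\wz T_{t^2}\}_{t>0}$ a continuous $(1,N,N)$-$\ati$, and that Propositions \ref{p5.2} and \ref{p5.3} verify the hypotheses of Theorem \ref{t4.2}, after which Theorems \ref{t5.1} and \ref{t5.2} are obtained by direct application of Theorems \ref{t3.1}, \ref{t3.2}, \ref{t4.1}, \ref{t4.2} and Remark \ref{r4.2}. Your write-up is in fact slightly more explicit than the paper's (e.g., in noting the continuity of $r\mapsto\mu(B(x,r))$ to invoke \cite[Theorem 17]{st} for the doubling of $U\,d\mu$), but the argument is the same.
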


\begin{thm}\label{t5.3}
Let $q>D/2$ and $U\in \cb_q(\bbg,\,d,\,\mu)$. If $r\in(1,\,\fz]$, then
$\|\cdot\|_{H^{1,\,r}_{\rho,\,\fin}(\bbg)}$ and $\|\cdot\|_{H^1_\cl(\bbg)}$
are equivalent on $H^{1,\,r}_{\rho,\,\fin}(\bbg)$.
\end{thm}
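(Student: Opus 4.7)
The plan is to deduce Theorem \ref{t5.3} directly from the abstract machinery already set up in Sections \ref{s3} and \ref{s4}, together with the identification of $H^1_\cl(\bbg)$ obtained just above in Theorem \ref{t5.2}. The strategy is to verify that the hypotheses of the general finite atomic decomposition result, Theorem \ref{t3.2}(ii), hold in the present setting, and then transport the resulting norm equivalence from $H^1_\rho(\bbg)$ to $H^1_\cl(\bbg)$ via Theorem \ref{t5.2}.

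First I would check that $\rho$ as defined in \eqref{2.3} is admissible on $\bbg$: since $(\bbg,d,\mu)$ is an RD-space and $U\in\cb_q(\bbg,d,\mu)$ with $q>D/2>n/2$, Proposition \ref{p2.1} applies (the doubling of $U(z)\,d\mu(z)$ being assured under the standing RD-space hypotheses used in Section \ref{s5.4}, cf. the remarks after \eqref{2.3}), yielding an admissible $\rho$. Thus $H^{1,\,r}_{\rho,\,\fin}(\bbg)$ and $H^1_\rho(\bbg)$ are well-defined in the sense of Definition \ref{d2.8}, and the abstract Theorem \ref{t3.2}(ii) applies: for every $r\in(1,\fz]$, the norms $\|\cdot\|_{H^1_\rho(\bbg)}$ and $\|\cdot\|_{H^{1,\,r}_{\rho,\,\fin}(\bbg)}$ are equivalent on $H^{1,\,r}_{\rho,\,\fin}(\bbg)$.

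Next, I would invoke Theorem \ref{t5.2}, which identifies $H^1_\cl(\bbg)=H^1_\rho(\bbg)$ with equivalent norms (this is the content assembled from Propositions \ref{p5.1}, \ref{p5.2}, \ref{p5.3}, which verify hypotheses (i) and (ii) of Theorem \ref{t4.2} for $\{T_{t^2}\}_{t>0}$ and the continuous $(1,\,N,\,N)$-$\ati$ $\{\wz T_{t^2}\}_{t>0}$). Combining this norm equivalence with the equivalence from Theorem \ref{t3.2}(ii) immediately gives
\begin{equation*}
\|f\|_{H^{1,\,r}_{\rho,\,\fin}(\bbg)}\sim \|f\|_{H^1_\rho(\bbg)}\sim \|f\|_{H^1_\cl(\bbg)}
\end{equation*}
for all $f\in H^{1,\,r}_{\rho,\,\fin}(\bbg)$, which is exactly the claimed statement.

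In fact there is essentially no obstacle here, as all the work has been packaged into the general theorems. The only thing to be careful about is to note that $H^{1,\,r}_{\rho,\,\fin}(\bbg)\subset H^1_\rho(\bbg)=H^1_\cl(\bbg)$ as sets (using Lemma \ref{l3.1} and Theorem \ref{t5.2}), so the comparison of norms takes place on a common subspace, and no density or extension argument is needed beyond what was already used in Theorem \ref{t3.2}(ii). This reduces the proof to a citation of Theorem \ref{t3.2}(ii) together with Theorem \ref{t5.2}.
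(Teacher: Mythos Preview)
Your proposal is correct and matches the paper's approach: Theorem \ref{t5.3} is stated in the paper as a direct consequence of the general results in Sections \ref{s3} and \ref{s4} (specifically Theorem \ref{t3.2}(ii)) applied to $\cl$, together with the identification $H^1_\cl(\bbg)=H^1_\rho(\bbg)$ from Theorem \ref{t5.2}, and you have spelled out exactly this chain of reductions.
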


Moreover, applying Proposition \ref{p3.2} to the Riesz transforms $\nabla\cl^{-1}$,
we have the following conclusion.

\begin{thm}\label{t5.4}
If $q\in(D/2,\,D)$ and $U\in \cb_q(\bbg,\,d,\,\mu)$, then
Riesz transforms $\nabla \cl^{-1/2}$ are bounded from $H^1_\cl(\bbg)$ to $L^1(\bbg)$.
\end{thm}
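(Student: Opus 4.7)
The plan is to apply Proposition \ref{p3.2} with $B\equiv L^1(\bbg)$ and some $q_0\in(1,q)$: by Theorem \ref{t5.3}, $H^{1,\,q_0}_{\rho,\,\fin}(\bbg)$ is dense in $H^1_\cl(\bbg)$ with equivalent norms, so it suffices to verify
$$\sup\lf\{\|\nabla\cl^{-1/2}a\|_{L^1(\bbg)}:\ a\ \text{is any}\ (1,\,q_0)_\rho\text{-atom}\r\}<\fz.$$
The hypothesis $q\in(D/2,\,D)$ and $U\in\cb_q(\bbg,\,d,\,\mu)$ ensures, via Li \cite{l99}, that $\nabla\cl^{-1/2}$ is bounded on $L^{q_0}(\bbg)$ for some $q_0>1$; this will handle the local piece, while far-field behavior will be controlled by pointwise kernel estimates on the integral kernel
$$K_\cl(x,\,y)\equiv\frac1{\sqrt\pi}\int_0^\fz\nabla_xT_t(x,\,y)\frac{dt}{\sqrt t}$$
of $\nabla\cl^{-1/2}$.

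The first step is to derive the kernel estimates. Combining Proposition \ref{p5.2} with a Cauchy integral argument and the standard gradient bounds for the semigroup $\{\wz T_t\}_{t>0}$ on $\bbg$, one upgrades the Gaussian estimate to the gradient estimate
$$|\nabla_x T_t(x,\,y)|\ls \frac1{\sqrt t\, V_{\sqrt t}(x)}\lf[\frac{\rho(x)}{\rho(x)+\sqrt t}\r]^N\exp\lf\{-\frac{[d(x,\,y)]^2}{Ct}\r\}$$
for any $N>0$; integrating in $t$ (splitting at $t\sim[d(x,y)]^2$) produces
$$|K_\cl(x,\,y)|\ls\frac1{V(x,\,y)\,d(x,\,y)}\lf[\frac{\rho(x)}{\rho(x)+d(x,\,y)}\r]^N,$$
together with a Hölder-type estimate $|K_\cl(x,\,y)-K_\cl(x,\,y')|\ls V(x,y)^{-1}d(x,y)^{-1}[d(y,y')/d(x,y)]^\delta$ for some $\delta>0$ when $d(y,\,y')\le d(x,\,y)/2$.

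Given these kernel bounds, for a $(1,\,q_0)_\rho$-atom $a$ supported in $B\equiv B(x_0,\,r)$ with $r<\rho(x_0)$, split
$$\|\nabla\cl^{-1/2}a\|_{L^1(\bbg)}
\le \int_{2B}+\int_{B(x_0,\,C\rho(x_0))\setminus 2B}+\int_{\bbg\setminus B(x_0,\,C\rho(x_0))}.$$
The near integral is bounded by the Hölder inequality and the $L^{q_0}$-boundedness of $\nabla\cl^{-1/2}$ by $[V_r(x_0)]^{1-1/q_0}\|a\|_{L^{q_0}(\bbg)}\ls1$. On $B(x_0,\,C\rho(x_0))\setminus 2B$: when $r<\rho(x_0)/4$ exploit $\int_\bbg a\,d\mu=0$ and the Hölder regularity of $K_\cl$ to sum the geometric series $\sum_{j\ge1}2^{-j\delta}\ls1$; when $\rho(x_0)/4\le r<\rho(x_0)$ the annulus already has comparable scale, so a direct size estimate suffices. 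Outside $B(x_0,\,C\rho(x_0))$, applying Lemma \ref{l2.1} (iii) to the size bound of $K_\cl$ yields decay of the form $[\rho(x_0)/d(x_0,\,x)]^{N/(1+k_0)}V(x_0,\,x)^{-1}d(x_0,x)^{-1}$, which integrates to a finite quantity.

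The main obstacle is establishing the gradient estimate for $T_t$ with the potential-improvement factor $[\rho(x)/(\rho(x)+\sqrt t)]^N$; this requires carefully combining the gradient bounds for $\wz T_t$ on the nilpotent Lie group (going back to Varopoulos--Saloff-Coste--Coulhon \cite{v88,vsc92}) with the perturbation analysis developed in Propositions \ref{p5.2} and \ref{p5.3}, and is precisely where the hypothesis $q>D/2$ enters. The role of the upper bound $q<D$ is to guarantee the existence of a suitable $q_0\in(1,\,q)$ for which $\nabla\cl^{-1/2}$ is $L^{q_0}$-bounded via \cite{l99}.
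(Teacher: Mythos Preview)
Your strategy differs substantially from the paper's, and the gap lies in the kernel estimates you assert. A Cauchy integral (analyticity) argument controls \emph{time} derivatives $\partial_t^k T_t$, not the \emph{spatial} gradient $\nabla_x T_t$; passing from the Gaussian bound of Proposition~\ref{p5.2} to a pointwise gradient bound with the extra factor $[\rho(x)/(\rho(x)+\sqrt t)]^N$ would require a genuine perturbation argument (e.g.\ Duhamel with the known gradient bounds for $\wz T_t$ together with careful use of $U\in\cb_q$), which you have not carried out. More seriously, the H\"older regularity $|K_\cl(x,y)-K_\cl(x,y')|\ls\cdots$ that you need for the cancellation step on mean-zero atoms is a \emph{mixed} estimate---gradient in $x$, regularity in $y$---and even granting your gradient bound, deriving it takes an additional semigroup-splitting step. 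Neither the paper nor Li~\cite{l99} supplies such pointwise estimates on $K_\cl$; Li's results on the Riesz kernel are the \emph{integral} bounds \eqref{5.16} and \eqref{5.17}.

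The paper sidesteps all of this by writing $\nabla\cl^{-1/2}=A_1+A_2+A_3$: here $A_1$ carries the far-field part of the kernel $K$ and is handled directly by Li's integral estimate \eqref{5.16}; $A_2$ carries the near-field difference $K-\wz K$ (with $\wz K$ the kernel of the free Riesz transform $\nabla\Delta_\bbg^{-1/2}$) and is handled by \eqref{5.17}; and $A_3$ is the localized free Riesz transform with kernel $\wz K(x,y)\,\eta(d(x,y)/C\rho(x))$, to which Proposition~\ref{p3.3} applies because $\wz K$ is a classical Calder\'on--Zygmund kernel satisfying (K1) and (K2) by \cite{lv85,a92}. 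Thus the paper never needs pointwise size or H\"older bounds on $K_\cl$ itself---only Li's two integral inequalities plus the standard regularity of the free Riesz kernel. Your direct Calder\'on--Zygmund route could in principle be completed, but it demands substantially more from the Schr\"odinger heat kernel than is available off the shelf in this setting.
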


\begin{proof}
It has been proved in \cite[Theorem C]{l99} that
$\nabla \cl^{-1/2}$ is bounded on $L^{p_1}(\bbg)$
for any $p_1\in(1,\,p)$, where $1/p=1/q-1/D$.
By Proposition \ref{p3.2}, it suffices to prove that for all
$(1,\,2)_{\rho}$-atoms $a$,
$\|\nabla\cl^{-1/2}(a)\|_{L^1(\bbg)}\ls1.$

Let $K$ and $\wz K$ be the integral kernels of $\nabla\cl^{-1/2}$
and $\nabla\Delta_\bbg^{-1/2}$, respectively.
 Let $1/p_1+1/p_1'=1$.
Then Li \cite{l99} proved that for all $f\in L^{p_1'}_\loc(\bbg)$ and $x\in\bbg$,
\begin{equation}\label{5.16}
 \int_{d(x,\,y)>\rho(x)} |K(y,\,x)||f(y)|\,d\mu(y)\ls [M(|f|^{p_1'})(x)]^{1/p_1'}
\end{equation}
(see Lemma 6.1, Corollary 6.2 and their proofs therein), and that
\begin{equation}\label{5.17}
\int_{d(x,\,y)\le \rho(x)} |K(y,\,x)-\wz K(y,\,x)||f(y)|\,d\mu(y)
\ls [M(|f|^{p_1'})(x)]^{1/p_1'}
\end{equation}
(see Lemma 6.4 and its proofs therein).
Let $C$ be a positive constant such that
$1/2\le C_3C^{-1/(1+k_0)}(1+1/C)^{k_0/(1+k_0)}<1$.
If $d(x,\,y)>C\rho(x)$, then by \eqref{2.2}, we have
\begin{equation}\label{5.18}
 \rho(y)\le C_3C^{-1/(1+k_0)}(1+1/C)^{k_0/(1+k_0)}d(x,\,y)< d(x,\,y).
 \end{equation}
Let $\eta$ be as in \eqref{2.8} and for all $x\in\bbg$, set
\begin{eqnarray*}
A_1(f)(x)&&\equiv\int_\bbg K(x,\,y)\lf[1-\eta\lf(\frac{d(x,\,y)}{C\rho(x)}\r)\r]f(y)\,d\mu(y),\\
A_2(f)(x)&&\equiv\int_\bbg[K(x,\,y)-\wz K(x,\,y)]\eta\lf(\frac{d(x,\,y)}{C\rho(x)}\r)f(y)\,d\mu(y),
\end{eqnarray*}
and
$$A_3(f)(x)\equiv T(f)-A_1(f)(x)-A_2(f)(x)\\
\equiv\int_\bbg\wz K(x,\,y)\eta\lf(\frac{d(x,\,y)}{C\rho(x)}\r)f(y)\,d\mu(y).$$
Then by \eqref{5.18},
$$|A_1(f)(x)|\le \int_{d(x,\,y)>C\rho(x)} |K(x,\,y)|
|f(y)|\,d\mu(y)\le \int_{d(x,\,y)>\rho(y)} |K(x,\,y)|
|f(y)|\,d\mu(y),$$
which together with the
 duality, \eqref{5.16} and the boundedness of the Hardy-Littlewood maximal operator
 $M$ implies that $\|A_1(f)\|_{L^{p_1}(\bbg)}\ls \|f\|_{L^{p_1}(\cx)}$
for any $p_1\in(1,\,p)$. Moreover,
for all $(1,\,2)_\rho$-atoms $a$, by \eqref{5.16},
\begin{eqnarray*}
\|A_1(a)\|_{L^1(\cx)}\ls
\int_\bbg |a(y)|\int_{d(x,\,y)>\rho(y)} |K(x,\,y)|\,d\mu(x)\,d\mu(y)
\ls\|a\|_{L^1(\bbg)}\ls1,
\end{eqnarray*}
which together with Proposition \ref{p3.2} implies that $A_1$ is bounded from
$H^1_\rho (\bbg)$ to $L^1(\bbg)$.

Similarly, we have
\begin{eqnarray*}
|A_2(f)(x)|&&\le \int_{d(x,\,y)\le\rho(y)} |K(x,\,y)-\wz K(x,\,y)|
|f(y)|\,d\mu(y)\\
&&\quad + \int_{\rho(y)\le d(x,\,y)<2C\rho(x)} |K(x,\,y)||f(y)|\,d\mu(y)\\
&&\quad+\int_{\rho(y)\le d(x,\,y)<2C\rho(x)}\frac1{V(x,\,y)}|f(y)|\,d\mu(y).
\end{eqnarray*}
By duality, the boundedness of $M$ and $\rho(x)\sim \rho(y)$ when $d(x,\,y)<\rho(x)$,  we have
$\|A_2(f)\|_{L^{p_1}(\bbg)}\ls \|f\|_{L^{p_1}(\cx)}$ for any $p_1\in(1,\,p)$.
Moreover, for all $(1,\,2)_{\rho}$-atoms $a$, by Lemma \ref{l2.1} (i),
\eqref{5.15} and \eqref{5.17}, we have
\begin{eqnarray*}
\|A_2(a)\|_{L^1(\cx)}&&\ls
\int_\bbg |a(y)|\int_{d(x,\,y)<\rho(y)} |K(x,\,y)-\wz K(x,\,y)|\,d\mu(x)\,d\mu(y)\\
&&\quad+\int_\bbg |a(y)|\int_{\rho(y)\le d(x,\,y)<2C\rho(x)}
|K(x,\,y)|\,d\mu(x)\,d\mu(y)\\
&&\quad+\int_\bbg |a(y)|\int_{\rho(y)\le d(x,\,y)<2C\rho(x)}
\frac1{V(x,\,y)}\,d\mu(x)\,d\mu(y)
\ls\|a\|_{L^1(\bbg)}\ls1,
\end{eqnarray*}
 which together with Proposition \ref{p3.2} implies that $A_2$ is bounded from
$H^1_\rho (\bbg)$ to $L^1(\bbg)$.
Here we used the fact that $\wz K$ satisfies (K1); see \cite{lv85,a92}.

Obviously,  for any $p_1\in(1,\,p)$, $A_3$ is bounded on $L^{p_1}(\cx)$. Moreover,
for any $f\in L^\fz_b(\bbg)$ and $x\notin\supp f$,
$$A_3(f)(x)
=\int_\bbg \wz K(x,\,y)\eta\lf(\frac{d(x,\,y)}{C\rho(x)}\r)f(y)\,d\mu(y).$$
Since $\wz K$ satisfies the conditions (K1) and (K2) (see \cite{lv85,a92} again),
by Proposition \ref{p3.3}, $A_3$ is also bounded from $H^1_\rho (\bbg)$
to $L^1(\bbg)$. Thus, by Theorem \ref{5.2}, $\nabla\cl^{-1/2}$ are bounded
from $H^1_\cl(\bbg)$ to $L^1(\bbg)$,
which completes the proof of Theorem \ref{t5.4}.
\end{proof}

\smallskip

{\bf Acknowledgements.} Dachun Yang would like to thank Professor
Jacek Dziuba\'nski for some stimulating conversations on this subject
and Professor Heping Liu for providing his preprint \cite{lll}
and some useful discussions. Both authors would like to thank Professor
Wengu Chen for some useful advices on this subject and also
Doctor Liguang Liu for her improvement on Proposition \ref{p3.1}.
They would also like to thank the referee for
his many valuable remarks which improve the presentation of this
article.

\section*{References}

\medskip

\begin{enumerate}

\vspace{-0.3cm}
\bibitem[1]{a92}
G. Alexopoulos, An application of homogenization
theory to harmonic analysis: Harnack inequalities and
Riesz transforms on Lie groups
of polynomial growth, Canad. J. Math. 44 (1992), 691-727.

\vspace{-0.3cm}
\bibitem[2]{aa} P. Auscher and B. B. Ali,
Maximal inequalities and Riesz transform estimates on $L^p$
spaces for Schr\"odinger operators with nonnegative potentials,
Ann. Inst. Fourier (Grenoble) 57 (2007), 1975-2013.

\vspace{-0.3cm}
\bibitem[3]{B2} M. Bownik,  Boundedness of operators on
Hardy spaces via atomic decompositions, Proc. Amer. Math. Soc. 133
(2005), 3535-3542.

\vspace{-0.3cm}
\bibitem[4]{ch}
M. Christ, A $T(b)$ theorem with remarks on analytic capacity and the Cauchy integral,
Colloq. Math. 60/61 (1990), 601-628.

\vspace{-0.3cm}
\bibitem[5]{clms} R. R. Coifman,  P.-L. Lions, Y. Meyer and S. Semmes,
 Compensated compactness and Hardy spaces,  J. Math. Pures Appl. (9)
72 (1993), 247-286.

\vspace{-0.3cm}
\bibitem[6]{cw71}
R. R. Coifman and G. Weiss,
 Analyse Harmonique Non-commutative sur Certain Espaces Homog\`enes,
Lecture notes in Math. 242, Springer, Berlin, 1971.

\vspace{-0.3cm}
\bibitem[7]{cw77}  R. R. Coifman and G. Weiss,
Extensions of Hardy spaces and their use in analysis,
Bull. Amer. Math. Soc. 83 (1977), 569-645.

\vspace{-0.3cm}
\bibitem[8]{djs}
 G. David, J. L. Journ\'e et S. Semmes,
Op\'erateurs de Calder\'on-Zygmund, fonctions para-accr\'etives et
interpolation, Rev. Mat. Ibero. 1 (1985) 1-56.

\vspace{-0.3cm}
\bibitem[9]{dy05b}
 X. T. Duong and L. Yan, Duality of Hardy and BMO spaces associated
with operators with heat kernel bounds, J. Amer. Math. Soc. 18 (2005), 943-973.

\vspace{-0.3cm}
\bibitem[10]{d98}
J. Dziuba\'nski, Atomic decomposition of $H^p$ spaces associated with
some Schr\"odinger operators, Indiana Univ. Math. J. 47  (1998), 75-98.

\vspace{-0.3cm}
\bibitem[11]{dz99}
J. Dziuba\'nski and J. Zienkiewicz,
Hardy space $H\sp 1$ associated to Schr\"odinger operator with potential
satisfying reverse H\"older inequality,
Rev. Mat. Ibero. 15 (1999), 279-296.

\vspace{-0.3cm}
\bibitem[12]{dz02}
J. Dziuba\'nski and J. Zienkiewicz,
$H\sp p$ spaces for
Schr\"odinger operators,
Fourier analysis and related topics (B\'edlewo, 2000),
45-53, Banach Center Publ., 56, Polish Acad. Sci.,
Warsaw, 2002.

\vspace{-0.3cm}
\bibitem[13]{dz03}
J. Dziuba\'nski and J. Zienkiewicz,
$H\sp p$ spaces associated
with Schr\"odinger operators with potentials from reverse H\"older
classes, Colloq. Math. 98 (2003), 5-38.

\vspace{-0.3cm}
\bibitem[14]{dz04}
J. Dziuba\'nski and J. Zienkiewicz, Hardy spaces $H\sp 1$ for
Schr\"odinger operators with certain potentials, Studia Math. 164
(2004), 39-53.

\vspace{-0.3cm}
\bibitem[15]{d05}
J. Dziuba\'nski, Note on $H\sp 1$ spaces related to degenerate
Schr\"odinger operators, Illinois J. Math. 49 (2005), 1271-1297.

\vspace{-0.3cm}
\bibitem[16]{dgttz}
J. Dziuba\'nski, G. Garrig\'os, T. Mart\'inez, J. L. Torrea and
J. Zienkiewicz, $BMO$ spaces related to Schr\"odinger operators with
potentials satisfying a reverse H\"older inequality, Math. Z. 249
(2005), 329-356.

\vspace{-0.3cm}
\bibitem[17]{f83} C. Fefferman, The uncertainty principle,
Bull. Amer. Math. Soc. (N. S.) 9 (1983), 129-206.

\vspace{-0.3cm}
\bibitem[18]{fs72} C. Fefferman and E. M. Stein,
$H\sp{p}$ spaces of several variables,
Acta Math. 129 (1972), 137-193.

\vspace{-0.3cm}
\bibitem[19]{fs82}
G. B. Folland and E. M. Stein,
Hardy Spaces on Homogeneous Groups,
Princeton University Press, Princeton, N. J.,
1982.

\vspace{-0.3cm}
\bibitem[20]{g08} L. Grafakos,  Modern Fourier Analysis,
Second Edition, Graduate Texts in Math., No. 250,
Springer, New York, 2008.

\vspace{-0.3cm}
\bibitem[21]{gly1}
L. Grafakos, L. Liu and D. Yang, Maximal function characterizations
of Hardy sapces on RD-spaces and their applications,  Sci. China Ser. A  51  (2008), 2253-2284.

\vspace{-0.3cm}
\bibitem[22]{gly2}
L. Grafakos, L. Liu and D. Yang, Radial maximal function
characterizations for Hardy spaces on RD-spaces, Bull. Soc. Math. France
137 (2009), 225-251.

\vspace{-0.3cm}
\bibitem[23]{g73}
F. W. Gehring,  The $L\sp{p}$-integrability of the partial derivatives
of a quasiconformal mapping, Acta Math. 130 (1973), 265-277.

\vspace{-0.3cm}
\bibitem[24]{g}
D. Goldberg, A local version of real Hardy spaces,
Duke Math. J. 46 (1979), 27-42.

\vspace{-0.3cm}
\bibitem[25]{h99}
J. Heinonen,  Lectures on Analysis on Metric Spaces,
Universitext, Springer-Verlag, New York, 2001.

\vspace{-0.3cm}
\bibitem[26]{hmy06}
Y. Han, D. M\"uller and D. Yang,
Littlewood-Paley characterizations for Hardy spaces on spaces of
homogeneous type, Math. Nachr. 279 (2006), 1505-1537.

\vspace{-0.3cm}
\bibitem[27]{hmy2}
Y. Han, D. M\"uller and D. Yang,
A theory of Besov and Triebel-Lizorkin spaces on metric measure
spaces modeled on Carnot-Carath\'eodory spaces, Abstr. Appl. Anal. 2008,
Art. ID 893409, 250 pp.

\vspace{-0.3cm}
\bibitem[28]{hs01}
W. Hebisch and L. Saloff-Coste, On the relation between elliptic
 and parabolic Harnack inequalities,
 Ann. Inst. Fourier (Grenoble) 51 (2001), 1437-1481.

\vspace{-0.3cm}
\bibitem[29]{k78}
T. Kato,  Trotter's product formula for an arbitrary pair
of self-adjoint contraction semigroups,
Topics in functional analysis
(essays dedicated to M. G. Kre\u\i n on the occasion of his 70th birthday),
pp. 185--195, Adv. in Math. Suppl. Stud.,
 3, Academic Press, New York, 1978.

\vspace{-0.3cm}
\bibitem[30]{ks00a}
K. Kurata and S. Sugano, Fundamental solution,
 eigenvalue asymptotics and eigenfunctions of degenerate elliptic
operators with positive potentials, Studia Math. 138 (2000), 101-119.

\vspace{-0.3cm}
\bibitem[31]{ks00b}
K. Kurata and S. Sugano, A remark on estimates
for uniformly elliptic operators on weighted
$L\sp p$ spaces and Morrey spaces,
Math. Nachr. 209 (2000), 137-150.

\vspace{-0.3cm}
\bibitem[32]{lv85}
N. Lohou\'e and N. Th. Varopoulos,
Remarques sur les transform¨¦es de Riesz sur les groupes de Lie nilpotents,
C. R. Acad. Sci. Paris S\'er. I Math. 301 (1985), 559-560.

\vspace{-0.3cm}
\bibitem[33]{l99}
H. Li, Estimations $L\sp p$ des op\'erateurs de
Schr\"odinger sur les groupes nilpotents,
J. Funct. Anal. 161 (1999), 152-218.

\vspace{-0.3cm}
\bibitem[34]{lll}
C. Lin, H. Liu and Y. Liu, The Hardy space $H^1_L$
associated with Schr\"odinger operators
on the Heisenberg group, Submitted.

\vspace{-0.3cm}
\bibitem[35]{l96}
G. Lu, A Fefferman-Phong type inequality for
degenerate vector fields and applications,
Panamer. Math. J. 6 (1996), 37-57.

\vspace{-0.3cm}
\bibitem[36]{ma}
O. E. Maasalo, The Gehring lemma in metric spaces,
ArXiv: 0704.3916v3

\vspace{-0.3cm}
\bibitem[37]{ms79a} R. A. Mac{\'\i}as and C. Segovia,
Lipschitz functions on spaces of homogeneous type,
Adv. in Math. 33 (1979) 257-270.

\vspace{-0.3cm}
\bibitem[38]{ms79b} R. A. Mac{\'\i}as and C. Segovia,
A decomposition into atoms of distributions on spaces of
homogeneous type, Adv. in Math. 33 (1979) 271-309.

\vspace{-0.3cm}
\bibitem[39]{msv}
S. Meda, P. Sj\"ogren and M. Vallarino, on the $H^1-L^1$ boundedness of operators,
Proc. Amer. Math. Soc. 136 (2008), 2921-2931.

\vspace{-0.3cm}
\bibitem[40]{m72}
B. Muckenhoupt, Weighted norm inequalities
for the Hardy maximal function, Trans. Amer. Math. Soc. 165 (1972), 207-226.

\vspace{-0.3cm}
\bibitem[41]{ns04}
A. Nagel and E. M. Stein, On the product theory
of singular integrals, Rev. Mat. Ibero. 20 (2004), 531-561;
Corrigenda, Rev. Mat. Ibero. 21 (2005), 693-694.

\vspace{-0.3cm}
\bibitem[42]{ns06} A. Nagel and E. M. Stein,
The $\overline{\partial}\sb b$-complex on decoupled boundaries in
$\mathbb C\sp n$, Ann. of Math. (2) 164 (2006), 649-713.

\vspace{-0.3cm}
\bibitem[43]{nsw85}
A. Nagel, E. M. Stein and S. Wainger,
Balls and metrics defined by vector fields I. Basic properties,
Acta Math. 155 (1985), 103-147.

\vspace{-0.3cm}
\bibitem[44]{s86}
L. Saloff-Coste, Analyse sur les groupes de Lie nilpotents,
C. R. Acad. Sci. Paris S\'er. I Math. 302 (1986), 499-502.

\vspace{-0.3cm}
\bibitem[45]{s87}
L. Saloff-Coste, Fonctions maximales sur certains groupes de Lie,
C. R. Acad. Sci. Paris S\'er. I Math. 305 (1987), 457-459.

\vspace{-0.3cm}
\bibitem[46]{s90}
L. Saloff-Coste, Analyse sur les groupes de Lie \`a croissance polyn\^omiale,
Ark. Mat. 28 (1990), 315-331.

\vspace{-0.3cm}
\bibitem[47]{s95}
Z. Shen, $L\sp p$ estimates for Schr\"odinger
operators with certain potentials,
 Ann. Inst. Fourier (Grenoble) 45 (1995), 513-546.

\vspace{-0.3cm}
\bibitem[48]{s93}
E. M. Stein, Harmonic Analysis:
Real-variable Methods, Orthogonality, and Oscillatory Integrals,
Princeton University Press, Princeton, N. J., 1993.

\vspace{-0.3cm}
\bibitem[49]{sw} E. M. Stein and G. Weiss,
On the theory of harmonic functions of several variables. I. The
theory of $H^p$-spaces, Acta Math. 103 (1960), 25-62.

\vspace{-0.3cm}
\bibitem[50]{st}
J.-O. Str\"omberg and A. Torchinsky, Weighted Hardy Spaces,
Lecture Notes in Mathematics, 1381, Springer-Verlag, Berlin, 1989.

\vspace{-0.3cm}
\bibitem[51]{t06} H. Triebel, Theory of Function Spaces III,
Birkh\"auser Verlag, Basel, 2006.

\vspace{-0.3cm}
\bibitem[52]{u80} A. Uchiyama, A maximal function characterization of
$H\sp{p}$ on the space of homogeneous type,
Trans. Amer. Math. Soc. 262 (1980), 579-592.

\vspace{-0.3cm}
\bibitem[53]{v88}
N. Th. Varopoulos, Analysis on Lie groups, J. Funct. Anal. 76 (1988), 346-410.

\vspace{-0.3cm}
\bibitem[54]{vsc92}
N. Th. Varopoulos, L. Saloff-Coste and T. Coulhon,
Analysis and Geometry on Groups,
Cambridge Univ. Press, New York, 1992.

\vspace{-0.3cm}
\bibitem[55]{w91}
P. Wojtaszczyk, Banach Spaces for Analysts,
Cambridge University Press, Cambridge, 1991.

\vspace{-0.3cm}
\bibitem[56]{yz1}
D. Yang and Y. Zhou, A boundedness criterion
via atoms for linear operators in Hardy spaces,
Constr. Approx.  29  (2009),  207-218.

\vspace{-0.3cm}
\bibitem[57]{yz09} D. Yang and Y. Zhou,
New properties of Besov and Triebel-Lizorkin spaces
on RD-spaces, arXiv: 0903.4583.

\vspace{-0.3cm}
\bibitem[58]{z99}
J. Zhong, The Sobolev estimates for some Schr\"odinger type
operators, Math. Sci. Res. Hot-Line 3:8 (1999), 1-48.

\end{enumerate}

\bigskip

\noindent Dachun Yang and Yuan Zhou

\medskip

\noindent School of Mathematical Sciences,
 Beijing Normal University,
 Laboratory of Mathematics and Complex Systems, Ministry of Education,
 Beijing 100875, People's Republic of China

\noindent{\it E-mail addresses}:
\texttt{dcyang@bnu.edu.cn} and \texttt{yuanzhou@mail.bnu.edu.cn}

\end{document}